\theoremstyle{plain}
\newtheorem{thm}{Theorem}[section]
\newtheorem{lemma}[thm]{Lemma}
\newtheorem{cor}[thm]{Corollary}
\theoremstyle{definition} 
\newtheorem{defn}{Definition}
\begin{document}

\title{Complementary Regions of Multi-Crossing Projections of Knots}

\author{MurphyKate Montee}
\address{MurphyKate Montee, University of Notre Dame}
\email{mmontee@nd.edu}

\begin{abstract}
An increasing sequence of integers is said to be \emph{universal} for knots if every knot has a reduced regular projection on the sphere such that the number of edges of each complementary face of the projection comes from the given sequence. Adams, Shinjo, and Tanaka have, in a work, shown that $(2,4,5)$ and $(3,4,n)$ (where $n$ is a positive integer greater than $4$), among others, are universal.  In a forthcoming paper, Adams introduces the notion of a multi-crossing projection of a knot.  An \emph{$n$-crossing projection} is a projection of a knot in which each crossing has $n$ strands, rather than $2$ strands as in a regular projection.  We then extend the notion of universality to such knots.  These results allow us to prove that $(1,2,3,4)$ is a universal sequence for both $n$-crossing knot projections, for all $n>2$.  Adams further proves that all knots have an $n$-crossing projection for all positive $n$. Another proof of this fact is included in this paper. This is achieved by constructing $n$-crossing template knots, which enable us to construct multi-crossing projections with crossings of any multiplicity.
 \end{abstract}

\maketitle

\section{Introduction}

Knot theorists have long been interested in \emph{regular projections} of knots, in which all singular points of the projection are 
\emph{regular} crossing points; specifically, there are two strands of the knot which cross each other.  In \cite{Ad}, Adams introduces the concept of 
a triple crossing knot projection, in which every singular point of the projection is a triple crossing, a singular point at which three strands 
cross each other so that each strand bisects the crossing.  In his paper, Adams extends this idea to a \emph{multi-crossing}, as defined below:

\begin{defn} A \emph{multi-crossing} of multiplicity $n$, also called an \emph{$n$-crossing}, is a singular point at which $n$ strands cross each other 
so that each strand passes straight through the crossing, thus bisecting the crossing.  
An $n$-crossing projection of a knot is a projection in which the only singular points are $n$-crossings.
\end{defn}

In \cite{AST}, Adams, Shinjo, and Tanaka  investigate \emph{complementary regions} of regular reduced knot projections.
A complementary region in this sense is an extension of the graph theoretic concept; a regular reduced knot projection can be viewed as a planar $4$-valent 
graph.  Projecting that graph on a sphere partitions the sphere into various faces, called complementary regions.  We can identify these  
faces by the number of edges around each face, saying that a face with $m$ sides around the edge is an $m$-gon.

This paper will further extend the concept of complementary regions to $n$-crossing projections of knots.  An $n$-crossing projection can be seen as 
a planar $2n$-valent graph, which will similarly partition a sphere into faces, which we call complementary regions.

\begin{defn}A strictly increasing sequence of integers $(a_1, a_2, \dots, a_k)$ is \emph{realized} by a knot $K$ if $K$ has 
some projection $P$ so that the complementary regions of $P$ are $a_i$-gons for some $i = 1, \dots, k$. Such a sequence is 
\emph{universal for $n$-crossing projections} if every knot has some $n$-crossing projection which realizes the sequence.
\end{defn}

The paper will rely heavily on the following fact:

\begin{thm} Let $P$ be an $n$-crossing knot projection.  Then there exists a choice of crossing data that makes $P$ the trivial knot.
\end{thm}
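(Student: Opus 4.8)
The plan is to carry over to $n$-strand crossings the classical argument that the \emph{descending} resolution of any regular knot diagram is the unknot. For a regular crossing the crossing data is the choice of which strand is over; for an $n$-crossing it is a total ordering by height of the $n$ strands passing through the point. The idea is to fix a basepoint $b$ in the interior of an edge of $P$, traverse the knot once starting at $b$, and at each $n$-crossing rank the $n$ passing strands by the order in which the traversal reaches that crossing, the strand reached first lying on top and the strand reached last lying on the bottom.

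To implement this, I would view $P$ as a diagram in the plane and parametrize the underlying knot as $\gamma=(\gamma_1,\gamma_2)\colon[0,1]\to\R^2$ with $\gamma(0)=\gamma(1)=b$ and $\gamma$ injective on $(0,1)$ except that each $n$-crossing point of $P$ is attained exactly $n$ times. Lift to $\tilde\gamma\colon[0,1]\to\R^3$ by $\tilde\gamma(t)=(\gamma(t),\,1-t)$. Since $t\mapsto 1-t$ is strictly monotone, the $n$ times at which the curve passes through a given crossing receive $n$ distinct heights, so this lift realizes exactly the height ordering prescribed above; and since the crossings are the only multiple points of $\gamma$, the arc $\tilde\gamma$ is embedded. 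I would then close $\tilde\gamma$ up with the vertical segment $c(s)=(b,s)$, $s\in[0,1]$: because $b$ lies in the interior of an edge we have $\gamma(t)\ne b$ for $t\in(0,1)$, so $c$ meets $\tilde\gamma$ only at the shared endpoints $(b,0)$ and $(b,1)$. The result is an embedded closed curve $\widehat\gamma$ whose image under $(x,y,z)\mapsto(x,y)$ is $P$ itself, carrying precisely the crossing data described above (the added segment projects to the single point $b$ and so leaves $P$ and its crossing data unchanged).

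It remains to check that $\widehat\gamma$ is the trivial knot. The arc $\tilde\gamma$ is strictly monotone in the $z$-coordinate, hence is a graph over the interval $[0,1]$; applying the level-preserving shear $(x,y,z)\mapsto\bigl(x-u\,\gamma_1(1-z),\,y-u\,\gamma_2(1-z),\,z\bigr)$ with $u$ running from $0$ to $1$ straightens $\tilde\gamma$ onto the segment $\{0\}\times[0,1]$ while keeping the whole of $\widehat\gamma$ embedded at every stage (it sends $c$ to another graph over $[0,1]$ meeting $\{0\}\times[0,1]$ only at the endpoints, and two such graphs sharing their endpoints form a circle that visibly bounds an embedded disk, namely the union over $z$ of the horizontal segment joining its two points at height $z$). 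Equivalently one may simply invoke the standard fact that a closed curve lying over a knot diagram and descending monotonically in height between consecutive visits to a basepoint is unknotted.

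I do not expect a genuine obstacle: the content is the $n$-strand analogue of the descending-diagram lemma, and it uses nothing about $P$ being reduced or about the valence beyond the fact that the crossings are the only multiple points of the projection. The only points requiring care are bookkeeping ones — choosing the basepoint off all crossings so that the lift is embedded, and ensuring the closing arc disturbs neither the projection nor the embeddedness — and both are handled by the vertical-segment closure through $b$.
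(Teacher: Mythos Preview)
Your argument is correct and follows essentially the same approach as the paper: fix a basepoint, traverse the diagram, and at each $n$-crossing order the strands by the order in which the traversal visits them, so that the resulting lift is descending and hence unknotted. The paper states this in two sentences and simply asserts the conclusion; you have supplied the explicit monotone lift, the vertical closing arc, and an isotopy witnessing triviality, which makes the same idea rigorous.
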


\begin{proof}  Pick a point of $P$ and an orientation.  Beginning at $P$, follow the knot along the orientation.  
Consider a crossing.  Choose the first strand that passes through the 
crossing to be the top most strand, choose the second strand that passes through the crossing to be the second highest strand of the crossing, and continue 
until the final pass through the crossing.  Do this for each crossing.  This yields a trivial knot.
\end{proof}

This paper will prove the following theorem:

\begin{thm} For all $n>2$, the sequence $(1, 2, 3, 4)$ is universal for $n$-crossing knot projections.
\end{thm}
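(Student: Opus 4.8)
The plan is to take an arbitrary knot $K$, fix an ordinary two-crossing diagram $D$ of it, and manufacture an $n$-crossing diagram of $K$ all of whose complementary regions are $1$-, $2$-, $3$-, or $4$-gons, in two stages. Stage one replaces a small disk around each double crossing of $D$ by a local $n$-crossing gadget, without changing the knot type and without disturbing anything outside the disks. Stage two, with the whole diagram now drawn in $n$-crossings, destroys the remaining complementary regions of size at least $5$ one at a time, by splicing small trivial $n$-crossing tangles into their interiors. One point makes the bookkeeping clean: changing the over/under data at a multi-crossing changes neither the underlying $2n$-valent shadow graph nor the list of face sizes, so all face counting can be done on shadows, while the key fact recorded above --- that every $n$-crossing projection can be trivialized by a suitable choice of crossing data --- is what frees us to add auxiliary strands with impunity and still control the knot type.

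For stage one, inside a small disk about a double crossing I would let one of the two crossing strands make $n-2$ extra passes through the crossing point, along a nearly parallel bundle with consecutive passes joined by tiny arcs; this produces $n-2$ monogons, each removable by a small isotopy, and leaves the two genuine strands with their original relative level, so the resulting diagram still represents $K$. Nothing outside the disks changes, so the complementary regions of $D$ persist, now accompanied by the new monogons. This is where $n>2$ first enters: for $n=2$ the gadget is just the original crossing and no monogon room is gained, whereas for $n\ge 3$ every crossing contributes at least one monogon. (An Euler count reinforces the dichotomy: with all crossings of multiplicity $n$ one has $F=(n-1)V+2$ faces, so the mean face size tends to $2n/(n-1)\le 3$ when $n\ge 3$, while for $n=2$ it tends to $4$, forcing a diagram with only $\le 4$-gons to be essentially a quadrangulation of the sphere, which one would not expect to realize every knot.)

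For stage two I would remove a face $F$ with $m\ge 5$ sides by a move internal to $F$: push two fingers of the knot into the interior of $F$, one from each of two edges of $\partial F$, and clasp the two fingers to each other near the center of $F$ by a Reidemeister~II move, padding each of the two new crossing points up to an $n$-crossing as in stage one. This is an ambient isotopy of $K$; it occurs entirely inside $F$, so it disturbs no other complementary region; and it creates only monogons, one small bigon between the two new $n$-crossings, and two polygonal pieces of $F$. By choosing where the feet of the fingers sit and how far the fingers reach, one can arrange that each new piece has fewer sides than $F$, indeed that a $\le 4$-gon is peeled off at each step in ``ear-clipping'' fashion, so that finitely many such moves turn $F$ into $1$-, $2$-, $3$-, and $4$-gons. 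No face of size $\ge 5$ is ever created and no face outside $F$ is ever touched, so iterating over the finitely many large faces of the stage-one diagram terminates and produces the desired diagram of $K$. Run on a diagram of the unknot this also reproves that every knot has an $n$-crossing projection; and one may shorten stage two by starting from the regular projections of Adams, Shinjo, and Tanaka realizing $(2,4,5)$ or $(3,4,k)$ for fixed $k>4$, so that only the $5$-gons, respectively $k$-gons, need subdividing.

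The step I expect to be the genuine obstacle is making stage two tight: one has to verify that the clasp-and-peel move can always be set up so that the piece peeled off, and the bigon between the two new $n$-crossings, are $\le 4$-gons (not $5$-gons), that the shrinking remainder of $F$ actually reaches size $\le 4$ rather than stalling at $5$ --- in particular handling the last couple of peels, when $F$ has already become small --- and that around the degenerate faces produced along the way (bigons and triangles) the padding loops can be inserted without pushing any face past size $4$. Choosing the levels at the new crossings so that the clasped fingers stay trivial is routine given the trivialization theorem, and I would expect the hypothesis $n>2$ to be exactly what supplies the monogon room these paddings consume.
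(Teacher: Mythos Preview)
Your strategy is entirely different from the paper's, and it has a genuine unacknowledged gap in stage one. When you let one strand make $n-2$ extra passes through the crossing point, that strand now traverses the crossing $n-1$ times in alternating directions; after an odd number of traversals it exits on the far side, after an even number it exits on the near side. Hence your gadget reconnects correctly to the rest of the diagram only when $n-1$ is odd, i.e.\ when $n$ is even. For odd $n$ (in particular $n=3$, the first case you need) the strand comes out on the wrong side of the disk, so you cannot ``leave the two genuine strands with their original relative level'' without routing the strand around the outside of the disk and thereby disturbing an adjacent face --- exactly what you promised not to do. Equivalently, each tiny U-turn arc you add is a monogon whose single vertex is the crossing, so it contributes two strands there; monogons alone change the multiplicity by an even number and cannot take you from $2$ to an odd $n$. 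This parity obstruction propagates to stage two as well, since the clasp move produces $2$-crossings that you then want to pad ``as in stage one.'' You flag the face-size bookkeeping in stage two as the expected obstacle, but this earlier connectivity/parity issue is the one that actually breaks the argument as written.

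The paper avoids all of this by working globally rather than locally. For each residue class of $n$ it first builds a highly structured \emph{template unknot}: a $3$-crossing template whose interior is a hexagon tiled by equilateral triangles (Lemma~\ref{3template}), a $4$-crossing template whose interior is a rectangle tiled by squares (Lemma~\ref{lemma:4template}), and $(3k)$-, $(3k+2)$-, $(3k+4)$-crossing templates obtained from these by an iterated ``doubling'' construction (Lemma~\ref{lem:3ntemplate}). A rectilinear projection of the given knot $K$ is then laid on the template grid so that each corner and crossing of $K$ falls inside a single triangle or square; trivial parallel copies of $K$ (``doubling knots'') are threaded through the template crossings to raise every multiplicity to exactly $n$; the handful of $5$-gons that appear in the few local pictures are removed by an explicit ``loop trick''; and finally everything is connect-summed together inside reserved grid cells. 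Because the template already realizes $(1,2,3)$ or $(1,2,3,4)$, there is no open-ended ear-clipping to control and no parity problem --- the doubling method lands on the correct multiplicity by construction. What your approach would buy, if the gaps were closed, is a local and case-free argument; what the paper's approach buys is that every delicate step is confined to a small explicit list of pictures, at the cost of building a separate template for each residue of $n\pmod 3$ (and a special one for $n=4$).
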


The proof of this is broken into several steps.  Section 2 will prove this case for $3$-crossing projections.  Section 3 will generalize the results of Section 2 and 
prove this case for $3n$-crossing projections, for all $n\geq2$.  Section 4 will further generalize to $(3n+2)$ and $(3n+4)$-crossing projections where $n\geq1$, 
and Section 5 will prove the case for $4$-crossing projections.

Notice that the complementary regions of a knot projection do not depend on the crossing data of the knot.  Thus this paper will frequently 
represent knots without explicit crossing data.


\section{3-Crossing Projections}


Consider the knot projection of Figure \ref{fig:3template}.  It realizes the sequence $(1, 3)$ and has a hexagonal 
central region tiled by equilateral triangles. We will show that for any size of central hexagon, there exists a knot projection of this form.

\begin{figure}[h!]
\centering
\scalebox{.7}{\includegraphics{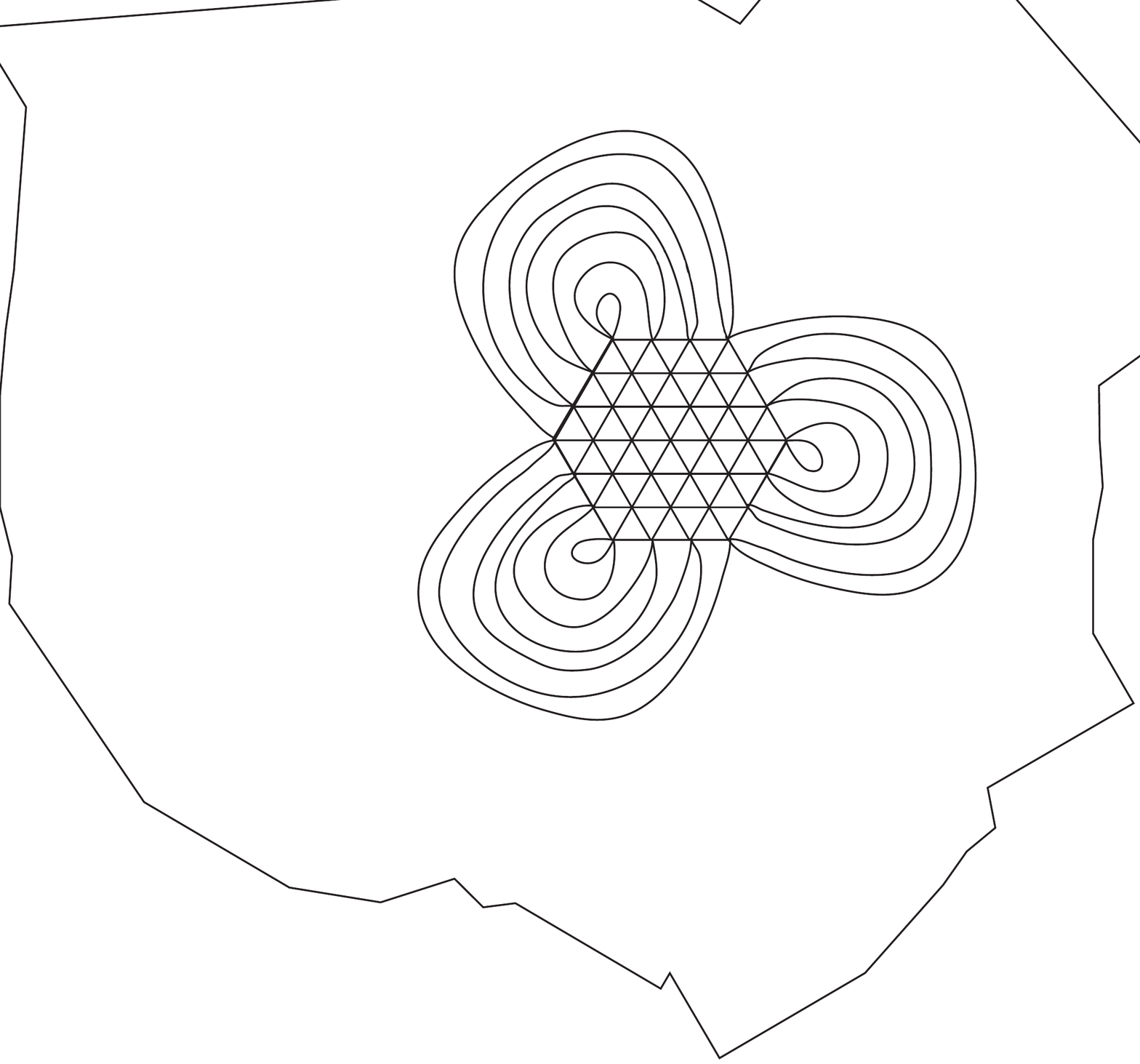}}
\caption{}
\label{fig:3template}
\end{figure}

\begin{lemma}\label{3template}  For any $n>1$, there exists a $3$-crossing knot projection which has a hexagonal central area tiled by 
equilateral triangles, 
with $n$ triangles along each edge of the hexagon, and which realizes the sequence (1, 3). 
\end{lemma}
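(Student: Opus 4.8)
The plan is to build the projection explicitly, generalizing the picture in Figure~\ref{fig:3template}, and then to check that the result is genuinely a knot projection (a single closed curve, not a link) all of whose complementary regions are $1$-gons or $3$-gons.

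First I would fix the triangular lattice in the plane and let $H_n$ be the subcomplex consisting of a regular hexagon with $n$ unit edges along each of its six sides, together with all of the unit edges and unit triangles it contains. Every bounded face of $H_n$ is a unit equilateral triangle, and every vertex strictly interior to $H_n$ has degree $6$; such an interior vertex is already a legitimate $3$-crossing, the three strands running along the three lattice directions, and straightening these vertices leaves exactly the hexagonal triangulated core demanded by the statement, with $n$ triangles along each side.

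The substance is in the boundary. A vertex in the interior of a side of $H_n$ has degree $4$ (two edges along the side, two into the interior), so one of its would-be strands is unmatched; a corner vertex has degree $3$, and in fact none of the three ``straight-through'' pairs at a corner is complete, so three half-strands are left dangling there. To promote all of these into $3$-crossings I would attach a boundary gadget outside $H_n$: run each dangling half-strand a short way out of the hexagon and then turn it back, either returning it to the vertex it came from so that it bounds an empty region (a $1$-gon), or splicing it to a neighbouring half-strand so that the bounded region created is a triangle. This gadget must be designed so that simultaneously (a) every boundary vertex ends up $6$-valent with a genuine bisecting strand structure, (b) every newly created bounded region, as well as the unbounded region, is a $1$-gon or a $3$-gon, and (c) the whole diagram remains connected. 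The corners are where I expect the real difficulty: the local degree there is odd, so the three dangling half-strands cannot be paired off among themselves, which forces the corner gadget to be genuinely different from the edge gadget --- plausibly introducing one extra $3$-crossing per corner, spliced into the two adjacent sides --- and getting this to mesh with the edge gadget while preserving (b) and (c) is the crux.

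Finally I would verify the two global properties uniformly in $n$. The face count is a local check: inside $H_n$ the faces are triangles, and the gadget was built so that each new region is a $1$-gon or a triangle, with the unbounded face arranged the same way, and none of this depends on $n$. Connectedness I would establish by tracing the strand: inside $H_n$ a strand is a maximal lattice line, entering and leaving through the boundary, and the gadget reconnects the exiting strands to one another and to the $1$-gon loops in a single cyclic pattern around the hexagon, so that following the curve from any point traverses the entire diagram before closing up; making this tracing argument work for every $n$ at once, rather than by inspecting small cases, is the other place requiring care. One could instead organize the whole argument as an induction on $n$, passing from the size-$n$ diagram to the size-$(n+1)$ one by wrapping a ring of triangles around the core and re-attaching the boundary gadget, but the essential points --- the corner behaviour and the connectedness count --- are the same either way.
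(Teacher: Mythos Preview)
Your high-level plan is exactly the paper's: build the triangulated hexagon $H_n$, let half-strands dangle at the boundary vertices, cap them off on the outside so that every boundary vertex becomes a genuine $3$-crossing and every new region is a $1$-gon or $3$-gon, and then trace the curve to show it is connected. But the two steps you yourself flag as ``the crux'' and ``the other place requiring care'' are never actually carried out, and there is a small error in the setup.

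The error: at a vertex interior to a side of $H_n$, the two lattice edges going into the hexagon are \emph{not} opposite to each other, so neither completes a strand; together with the one complete strand running along the side, this leaves \emph{two} dangling half-strands, not one. (This matches the paper's count of two tail edges at each such vertex and three at each corner, $12n+6=6(2n+1)$ in all.)

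The gap: you never say what the boundary gadget is, and you never verify that the result is a single component. The paper does both concretely and, contrary to your guess, introduces no new crossings outside the hexagon. It labels the $6(2n+1)$ tail edges cyclically as
\[
1A,\,2A,\,\dots,\,(2n{+}1)A,\ (2n{+}1)A',\,\dots,\,1A',\ 1B,\,\dots,\,1B',\ 1C,\,\dots,\,1C'
\]
around the boundary, joins them on the outside by the single rule $iX\leftrightarrow iX'$ for $X\in\{A,B,C\}$ (nested arcs around three alternate corners, which is where the three monogons come from), reads off the across-hexagon identifications directly from the lattice geometry, and then writes out the full cyclic sequence of labels encountered when following the strand, checking that every label appears exactly once. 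That explicit trace, valid uniformly in $n$, is precisely the missing ingredient in your proposal; without it one does not know the diagram is a knot rather than a link.
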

\begin{proof} 
Consider a hexagon composed entirely of equilateral triangles with $n$ triangles 
along each edge of the triangle.  To ensure that each vertex is a 3-crossing,  let three strands emerge outside each corner of the hexagon, 
and let two strands emerge from the hexagon at each point where two triangles meet along the edge of the hexagon.  
We call these strands \emph{tail edges}.  Thus, there are 
$6(2)(n-1) + 6(3) = 12n + 6 = 3(2)(2n+1)$ tail edges around the edge of the hexagon.  
Consider the upper-right corner of the hexagon.  Label the clockwise-most tail edge $1A$.  Continue 
clockwise around the edge of the hexagon, labeling the strands $2A, 3A, \dots, (2n+1)A, (2n+1)A', (2n)A', \dots, 1A', 1B, 2B, \dots, 
(2n+1)B, (2n+1)B', (2n)B', \dots, 1B', 1C, 2C, \dots, (2n_1)C, (2n+1)C', (2n)C', \dots, 1C'$, as in Figure \ref{fig:generaltemplate}.  

\begin{figure}[h!]
		\centering
		\scalebox{.7}{\includegraphics{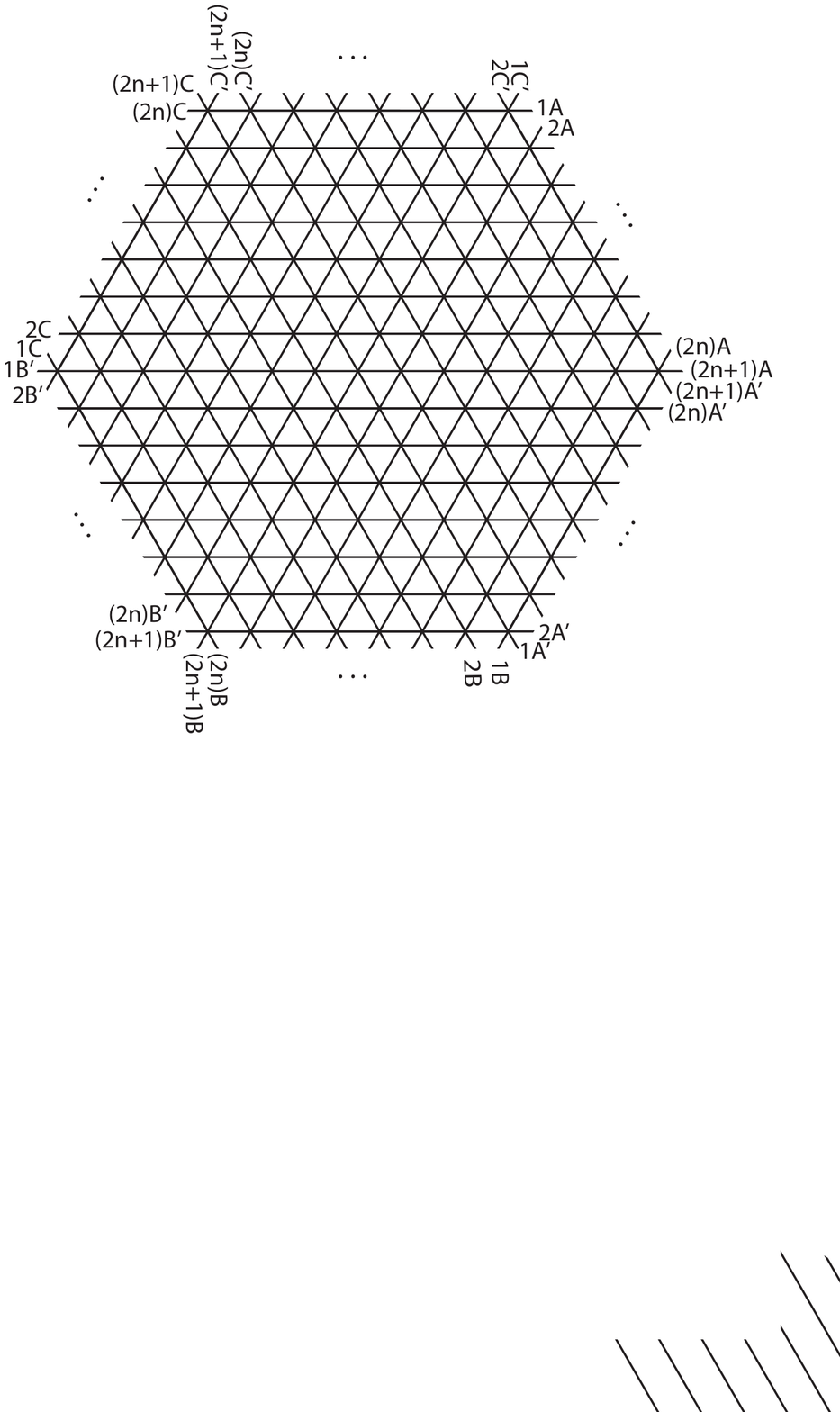}}
		\caption{General triple-crossing template.}
		\label{fig:generaltemplate} 
\end{figure}

On the exterior of the hexagon, we attach 
according to the rules: 
	\begin{align*}
		iA \leftrightarrow iA', \\
		iB \leftrightarrow iB', \\
		iC \leftrightarrow iC',
	\end{align*}
for all $i = 1, \dots, 2n+1$.  Across the hexagon, we get the following relations: 
	\begin{align*}
		1A' \leftrightarrow (2n+1)C, \\
		1B' \leftrightarrow (2n+1)A, \\ 
		1C' \leftrightarrow (2n+1)B. 
	\end{align*}
For odd values of $A, B, C$, we get the following relations: 
	\begin{align*}
		(2i+1)A \leftrightarrow (2(n-i))C, \\
		(2i+1)B \leftrightarrow (2(n-i))A, \\
		(2i+1)C \leftrightarrow (2(n-i))B, 
	\end{align*} 
for $i = 0, \dots, n-1$.  
For odd values of $A', B', C'$, we get the following relations: 
	\begin{align*}
		(2i+1)A' \leftrightarrow (2(n-i)+2)C', \\
		(2i+1)B' \leftrightarrow (2(n-i)+2)A', \\
		(2i+1)C' \leftrightarrow (2(n-i)+2)B', 
	\end{align*}
for $i = 1, \dots, n$.  This gives us relations for all values of $A, A', B, B', C,$ and $C'$.  Choose an orientation.  Following a strand, 
we obtain the sequence of Table \ref{table:3crossingsequence}.

Notice that the first line consists of all $(\rm{odd})C', (\rm{odd})C, (\rm{even})B', (\rm{even})B$, 
the second line consists of all $(\rm{even})A,$ $(\rm{even})A',$ 
$(\rm{odd})B$, $(\rm{odd})B'$, and the last line consists of all $(\rm{odd})A',$ $(\rm{odd})A,$ $(\rm{even})C',$ $(\rm{even})C$.  
Since this sequence therefore contains all 
exterior loops and all straight lines through the hexagon, this is a single strand that covers the entire projection, so the projection is a knot.
\end{proof}

Given this knot projection, 
we can choose orderings on the strands at each crossing so that the knot is the unknot.  We will call an unknot of this type of 
projection a \emph{3-crossing template knot}.  Notice that all 3-crossing template knots are composed of three 
monogons and many triangles.

\begin{thm} The sequence $(1, 2, 3, 4)$ is universal for all 3-crossing knots.
\end{thm}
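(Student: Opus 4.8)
The plan is to take an arbitrary knot $K$ and build the desired projection by grafting the ``knotting'' of $K$ into a $3$-crossing template knot $T_n$ of Lemma~\ref{3template} with $n$ chosen large. Since $T_n$ is the unknot and already realizes $(1,3)$, the idea is to confine all the crossing information of $K$ to one small region of the hexagonal triangular grid, leaving the rest of the template untouched, and then to check that the local surgery in that region creates only monogons, bigons, triangles, and quadrilaterals. Thus from the start only the bounded, modified region needs to be analyzed: everywhere else the faces are the $1$-gons and $3$-gons inherited from $T_n$.

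First I would fix a $3$-crossing diagram $D$ of $K$ with some number $c$ of triple crossings; such a $D$ exists (by Adams' cited work, or by the final theorem of this paper). I then isolate the crossing data of $D$ inside a bounded block $B$ meeting a bundle of parallel strands, within which the $c$ triple crossings of $D$ occur in sequence along a narrow strip, together with a fixed planar arrangement of return arcs outside $B$ that closes $B$ up into $K$. The role of the template is to supply these return arcs: choosing $n$ large enough that the hexagon contains a sub-grid with more than $c$ triangles, I delete the template's strands inside that sub-grid and reconnect the loose ends to the incoming and outgoing strands of $B$. Because the complement of a neighborhood of $B$ is a trivial arrangement of arcs (this being the whole point of routing through an unknotted template), the knot type of the result is controlled entirely by $B$, so it equals $K$; one also has to confirm the result is still a single component, which follows from Lemma~\ref{3template}-type bookkeeping on how the strands are rejoined.

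The real work, and the main obstacle, is the face count. Inside $B$ I would place the $c$ triple crossings one after another, separating consecutive crossings by ``spacer'' triangles and monogons arranged exactly as in the template picture (three monogons together with triangles meeting along triangle edges), using that explicit local model as the recipe for tiling a region by faces of size at most four; each face interior to $B$ is then a $1$-, $2$-, $3$-, or $4$-gon by inspection. One must then verify that the faces along the seam where $B$ is grafted into the deleted sub-grid, and the faces of $T_n$ bordering that sub-grid, are still triangles or quadrilaterals after reconnection; making $n$ large and keeping the grafting boundary ``straight'' along grid edges should keep every seam face a triangle or a quadrilateral. Finally, since the projection lies on the sphere there is no distinguished exterior region, but one still must rule out any single modification merging two faces into one large face; I expect this to be the delicate point and would handle it by performing the grafting and the crossing insertions one step at a time, each step a local move whose effect on the complementary regions can be read off directly, maintaining the inductive invariant that all faces are $1$-, $2$-, $3$-, or $4$-gons.
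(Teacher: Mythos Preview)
Your proposal has a real gap at the grafting step. You want to excise a sub-grid from the template $T_n$ and wire a block $B$ containing $K$'s triple crossings into the loose ends, arguing that since the exterior arcs are ``trivial'' the knot type is determined by $B$ alone. That is not true: the knot type depends on the \emph{pattern} in which the exterior arcs reconnect the endpoints of $B$, not merely on whether those arcs are individually unknotted. The template's strand connectivity through any sub-grid is rigidly prescribed by the construction of Lemma~\ref{3template}, and you give no mechanism for matching it to the return-arc pattern that $D$ actually requires. Without that, ``graft $B$ in and get $K$ out'' is a hope, not an argument. Your fallback (``Lemma~\ref{3template}-type bookkeeping'') does not address this, since that lemma only tells you the template is connected, not that any prescribed tangle closes up correctly inside it.

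The paper avoids this matching problem entirely by a different mechanism. It never deletes anything from the template and never starts from a $3$-crossing diagram of $K$. Instead it takes an ordinary rectilinear double-crossing projection $P$ of $K$, overlays $P$ on the triangular grid of $T_n$ (so $P$ and $T_n$ coexist), and then runs a trivial parallel copy $P'$ of $P$ (the \emph{doubling knot}) through every crossing via Reidemeister~II moves, converting each double crossing of $P$ with itself or with $T_n$ into a genuine triple crossing. This produces a three-component link $K \sqcup \text{unknot} \sqcup \text{unknot}$, which is then connect-summed into a single copy of $K$ by local moves shown in the figures. The face analysis reduces to a finite list of triangle types (single strand, corner, crossing, two uncrossing strands), each handled by an explicit picture; the one $5$-gon that arises is removed by inserting a single trivial triple crossing. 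The key idea you are missing is this doubling trick: it manufactures the third strand at every crossing for free, so no pre-existing $3$-crossing diagram of $K$ is needed and no connectivity matching is required.
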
\label{thm:3crossing}
\begin{proof}  Given a knot $K$, consider a regular polygonal projection $P$ of $K$
so that all the line segments composing the knot are horizontal or vertical lines.  
Lay $P$ on a square grid.  We may scale $P$ and the grid as necessary to ensure that the intersections and corners of $P$ 
lie in the center of a grid square; we may further ensure that there are at least two consecutive squares in a single row of our grid 
such that $P$ intersects them as only a single horizontal strand in both squares.
Transpose the knot to the left by half a 
square and down by half a square, and bisect the squares of this grid by parallel diagonal lines with negative slope, creating a triangular grid.   
Note that each corner and intersection of $P'$ is located on a lower left triangle.  Shear this grid and knot $60^\circ$ to 
the right to obtain an 
equilateral triangle grid on which $P$ is overlaid.  Thus, given a sufficiently large template knot, we can 
lay a regular projection of our knot over the template knot so that the corners and intersections of the knot occur on alternating triangles in 
the template knot. 

There are four types of triangles that can occur in this process:  (a) a corner of the original knot, (b) two uncrossing strands of 
the original knot (this occurs up and to the right of a crossing of the original knot), (c) a crossing of the original knot, and (d) a single 
strand of the original knot.  These are depicted in Figure \ref{fig:knotontemplate}.

\begin{figure}[h!]
	\begin{subfigure}{0.3\textwidth}
		\centering
		\scalebox{.3}{\includegraphics{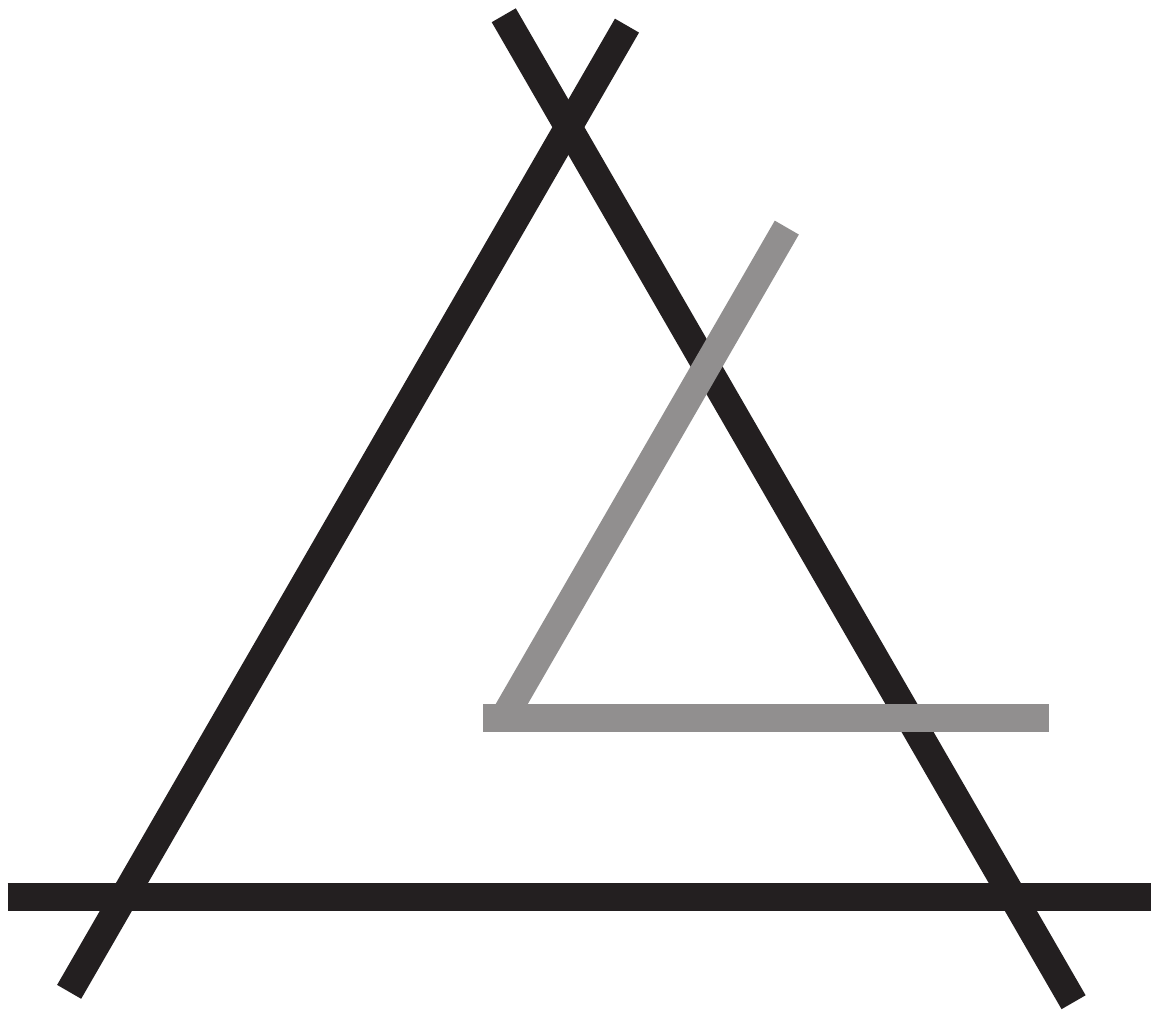}}
		\caption{}
		\label{fig:trianglewithcorner} 
	\end{subfigure}
	\begin{subfigure}{0.3\textwidth}
		\centering
		\scalebox{.3}{\includegraphics{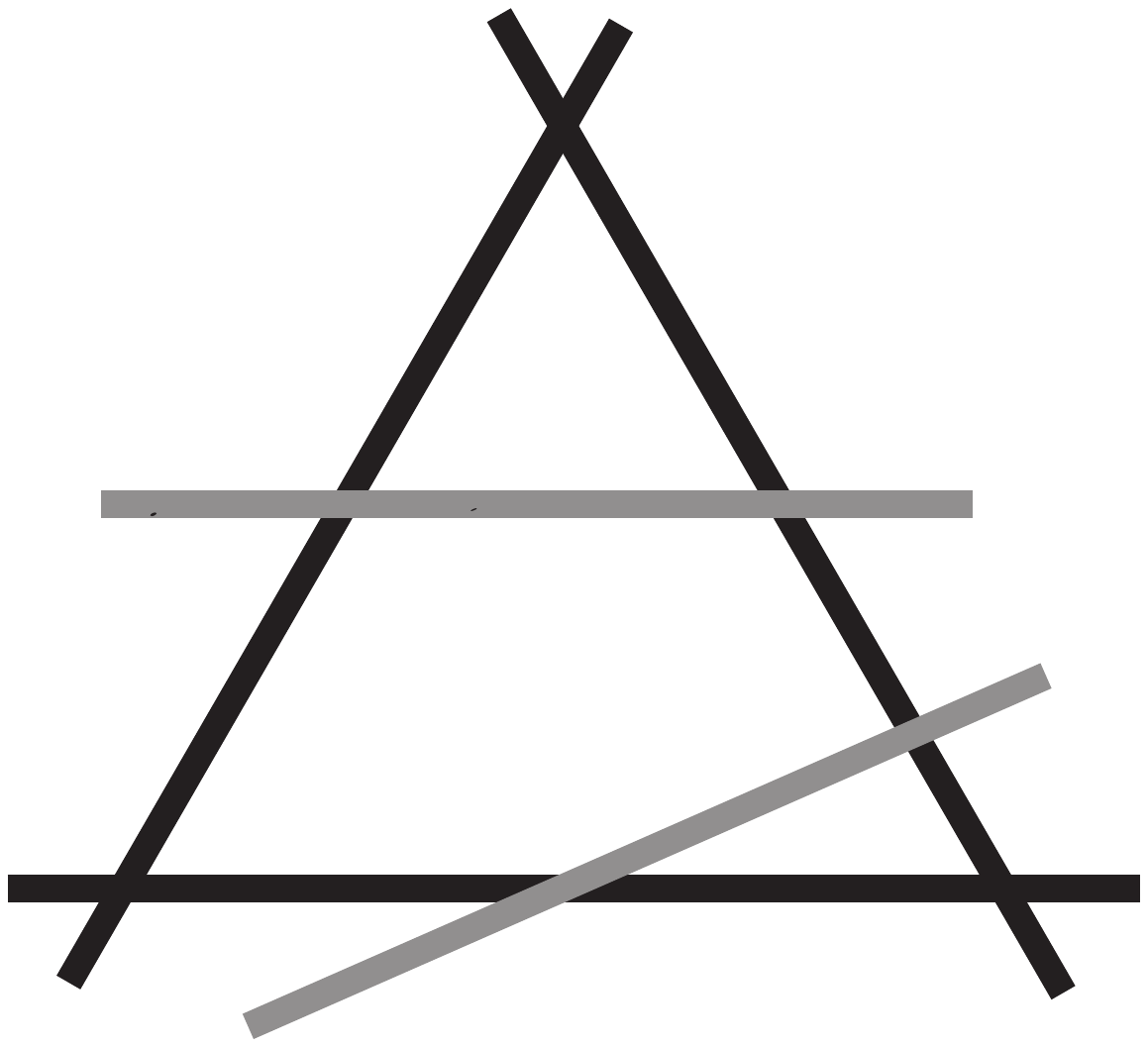}}
		\caption{}
		\label{fig:trianglewithtwouncrossingstrands} 
	\end{subfigure}
	\begin{subfigure}{0.3\textwidth}
		\centering
		\scalebox{.3}{\includegraphics{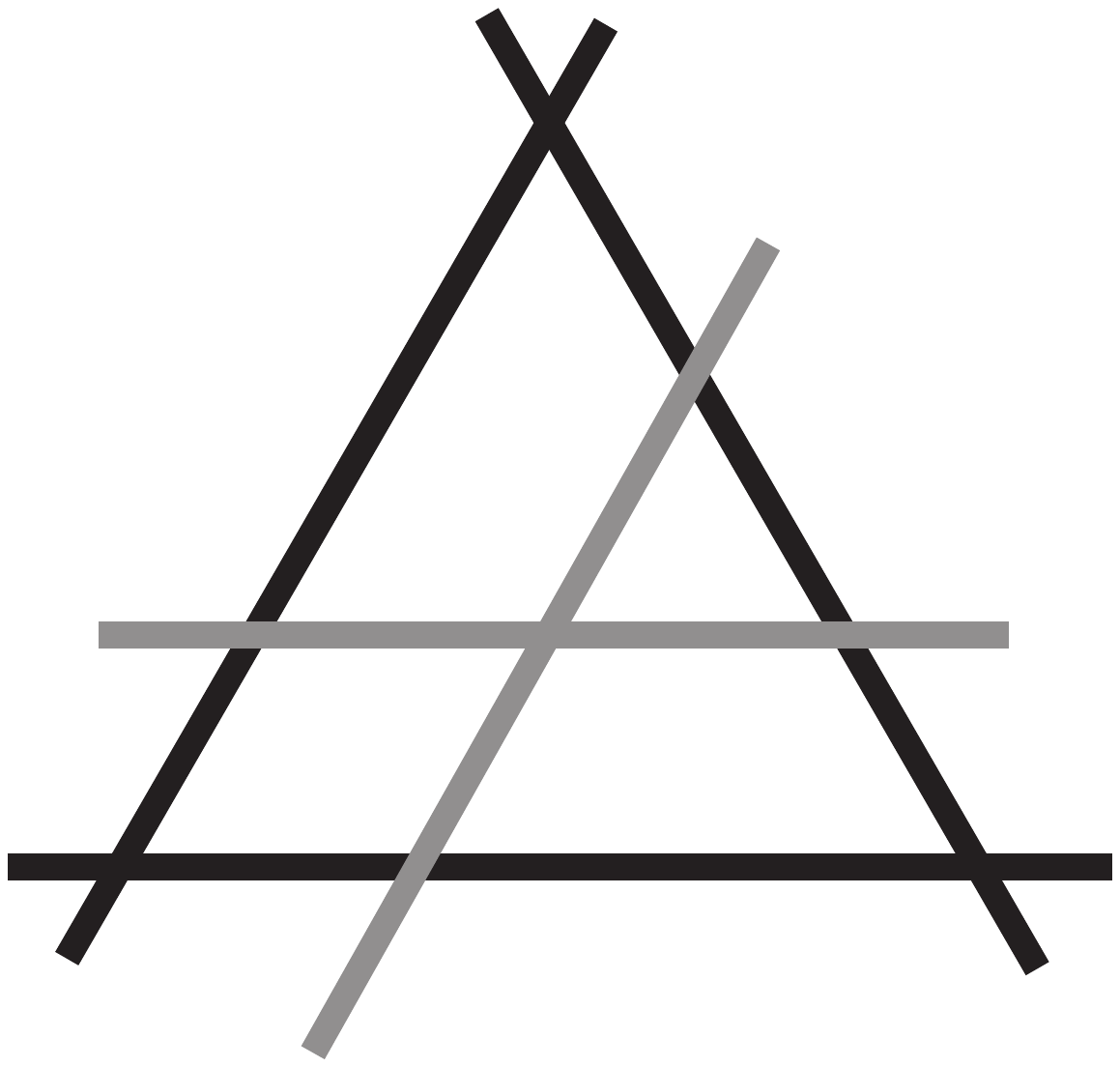}}
		\caption{}
		\label{fig:trianglewithcrossing} 
	\end{subfigure}
	\begin{subfigure}{0.3\textwidth}
		\centering
		\scalebox{.3}{\includegraphics{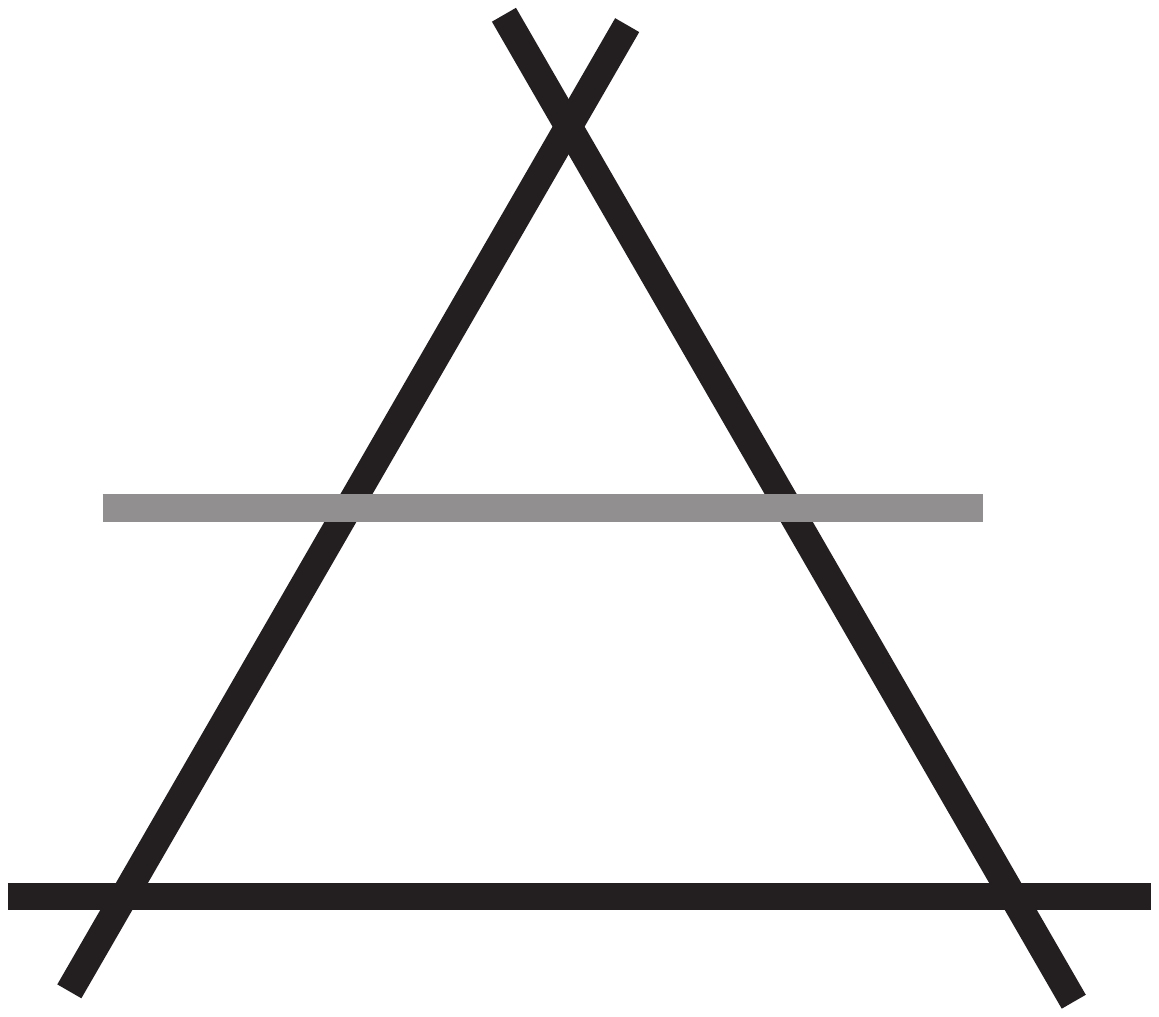}}
		\caption{}
		\label{fig:trianglewithsinglestrand} 
	\end{subfigure}
\caption{}
\label{fig:knotontemplate}
\end{figure}

Since triangles of Type \ref{fig:trianglewithcorner} may be removed by a Reidemeister II move,  
we need not consider triangles of this type.

Take a copy of $P$ and lay it next to $P$ closely enough that all the edges emerge from the same side of a triangle as 
the first knot, but change the crossing data in the copy so that it is the trivial knot.  We will call this 
trivial copy the \emph{doubling knot}, and we will call our first knot the \emph{original knot}. Pick a triangle and perform a 
Reidemeister II move so that the intersections with the doubling knot and the original knot occur precisely at the intersection of the 
original knot and the template knot.  This turns these intersections into 3-crossings.  At a crossing of our original knot with itself, perform a 
Reidemeister II move as in the Figure \ref{fig:trianglewithdoubledcrossing}.  This produces three types of triangles, as shown in Figure \ref{fig:doubledknot}.

\begin{figure}[h!]
	\begin{subfigure}[b]{0.3\textwidth}
		\centering
		\scalebox{.3}{\includegraphics{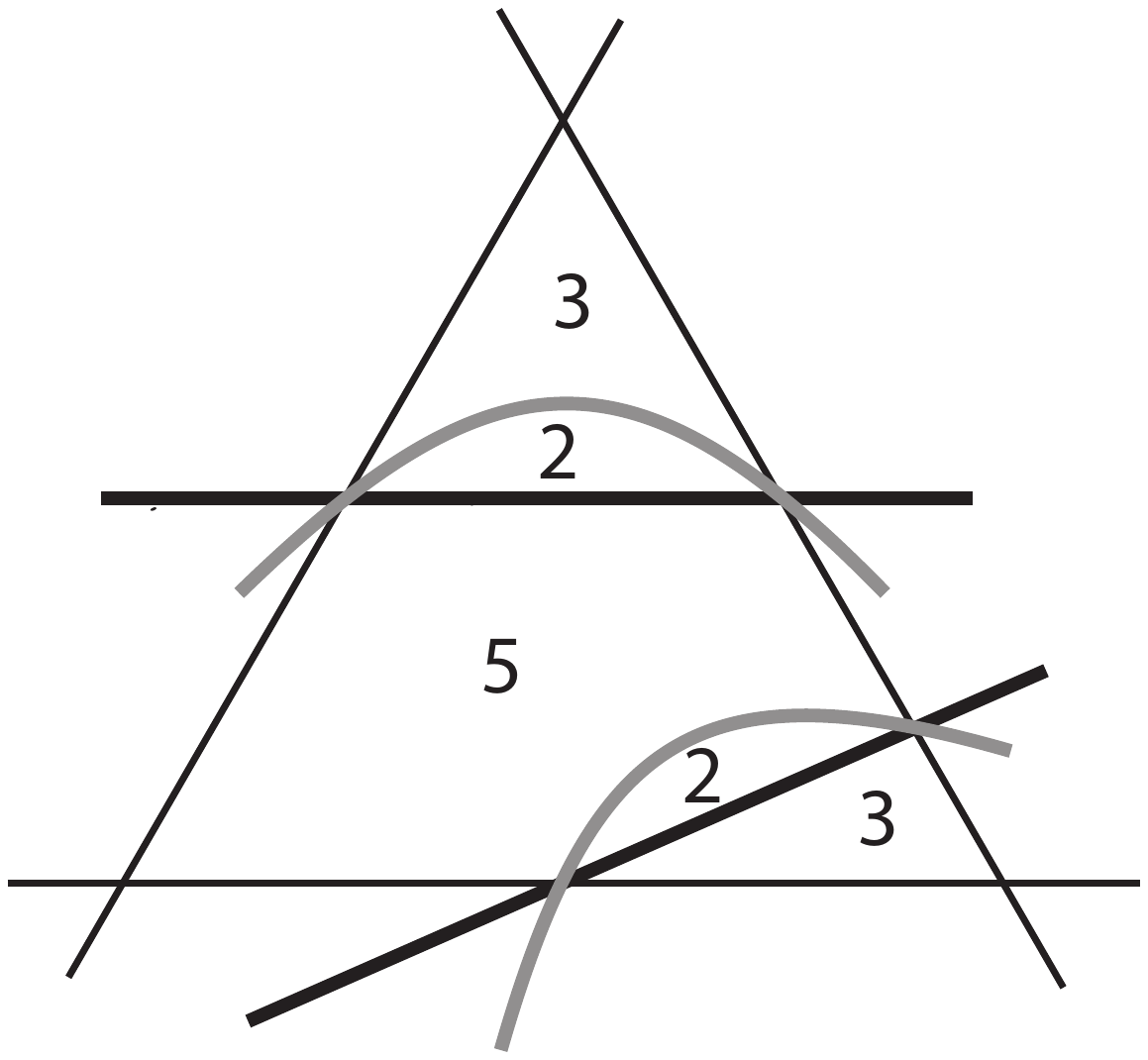}}
		\caption{}
		\label{fig:trianglewithtwouncrossingdoubledstrands} 
	\end{subfigure}
	\begin{subfigure}[b]{0.3\textwidth}
		\centering
		\scalebox{.3}{\includegraphics{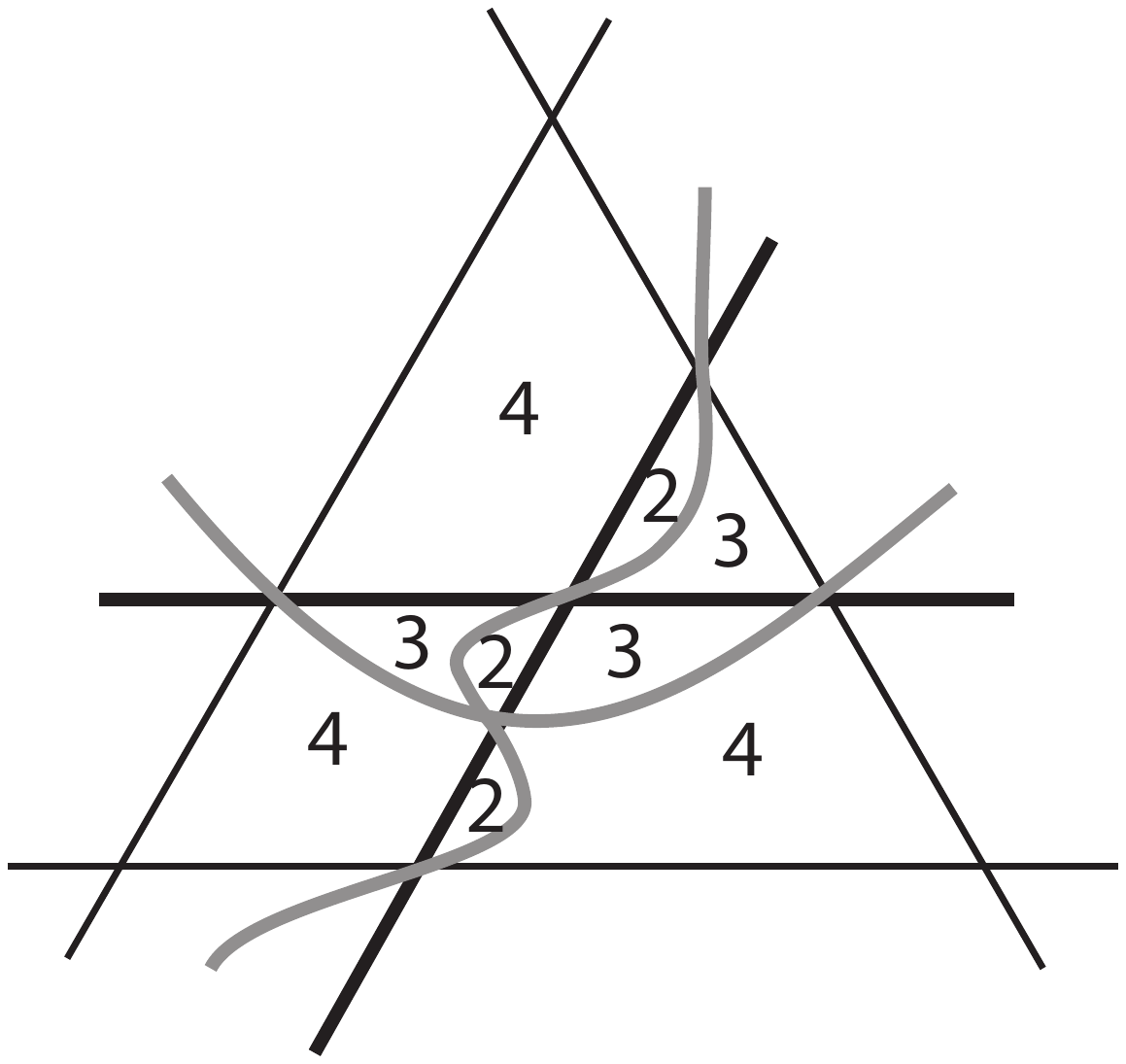}}
		\caption{}
		\label{fig:trianglewithdoubledcrossing} 
	\end{subfigure}
	\begin{subfigure}[b]{0.3\textwidth}
		\centering
		\scalebox{.3}{\includegraphics{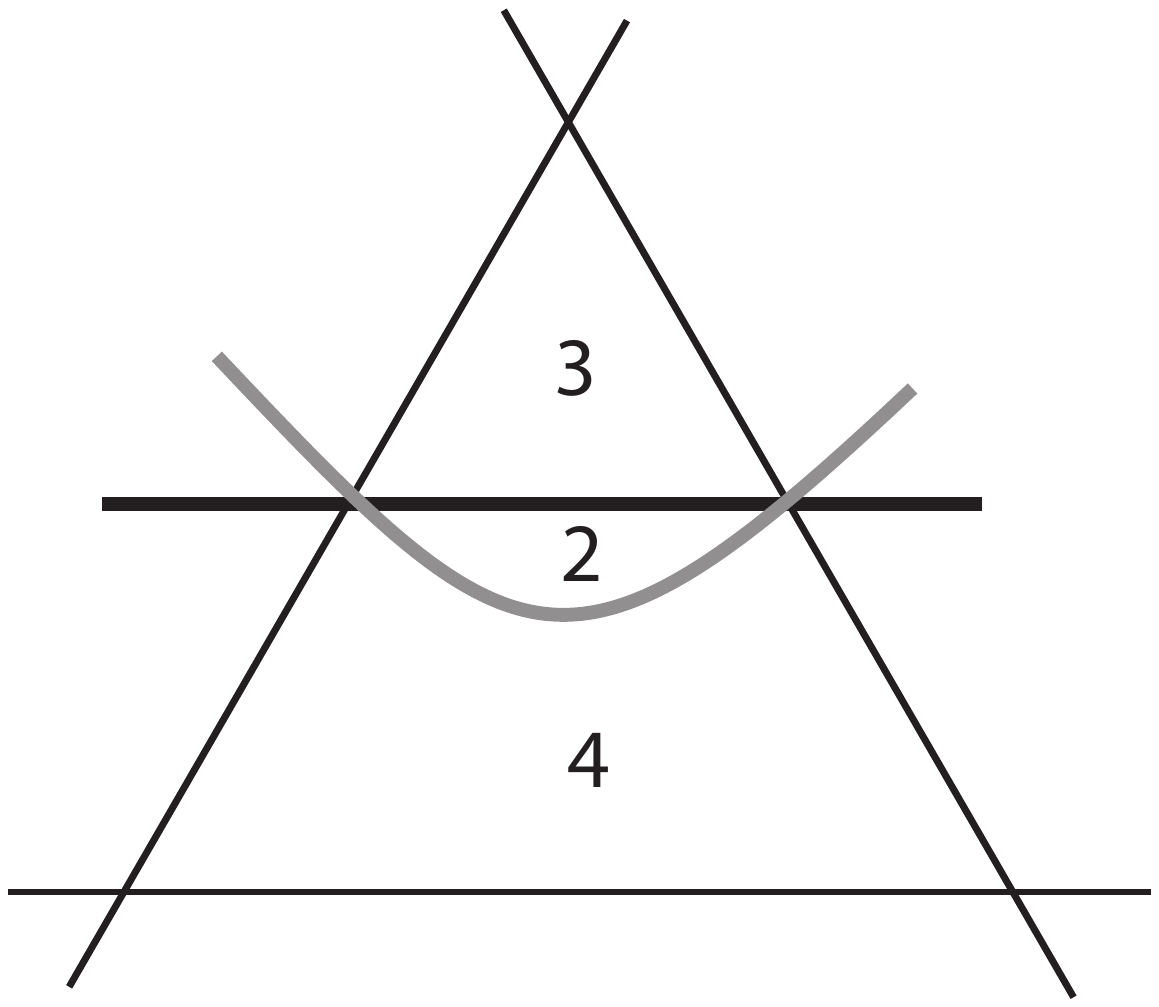}}
		\caption{}
		\label{fig:trianglewithsinglestranddoubled} 
	\end{subfigure}
\caption{(a) Two doubled uncrossing strands. (b) Doubled crossing. (c) Single doubled strand.}
\label{fig:doubledknot} 
\end{figure}

There is only one case which produces a 5-gon.  This can be altered using the method shown in Figure \ref{fig:fixingthe5gon}.

\begin{figure}[h!]
	\begin{subfigure}[b]{0.3\textwidth}
		\centering
		\scalebox{.3}{\includegraphics{trianglewithtwouncrossingdoubledstrands}}
		\caption{}
		\label{fig:fixing5gon1} 
	\end{subfigure}
	\begin{subfigure}[b]{0.3\textwidth}
		\centering
		\scalebox{.3}{\includegraphics{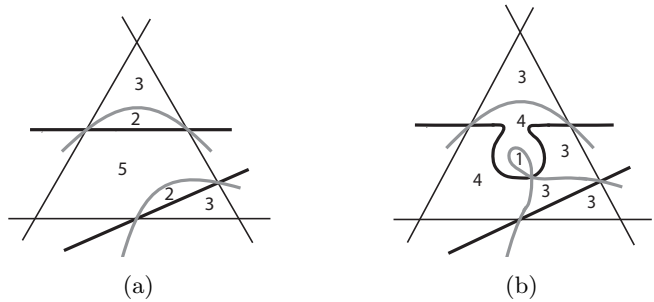}}
		\caption{}
		\label{fig:fixing5gon2} 
	\end{subfigure}
\caption{(a) Two doubled uncrossing strands, creating a 5-gon.  (b) Adding an additional trivial crossing to eliminate the 5-gon.}
\label{fig:fixingthe5gon} 
\end{figure}

Now notice that the method of Reidemeister II may require the doubling knot to traverse the original knot twice before rejoining with itself.  This 
is impossible, though, as we show now.  Bicolor the triangles of the template.
Choose an orientation on the original knot and 
pick a point on the doubling knot. 
We may assume this point is left of the original knot, when following the orientation, and in a black triangle.  
At each intersection of the original knot with itself or the template 
knot, the doubling knot switches to the opposite side.  In a triangle with a crossing, the doubling knot starts to the left, switches to the right, and then returns to the left of the original knot before exiting the triangle.  Thus, at each crossing of the 
original knot with the template, the doubling switches from left to right when exiting a black triangle, and from right to left when exiting a 
white triangle.  Thus the doubling knot must return to its starting position.


Now we need only compose the original knot with the template and with the doubling knot.  Compose the original knot with the doubling knot 
in the manner depicted in Figure \ref{fig:compositionwithdouble}, and compose the original knot with the template knot in the manner depicted 
in Figure \ref{fig:compositionwithtemplate}.

\begin{figure}[h!]
	\begin{subfigure}[b]{0.3\textwidth}
		\centering
		\scalebox{.3}{\includegraphics{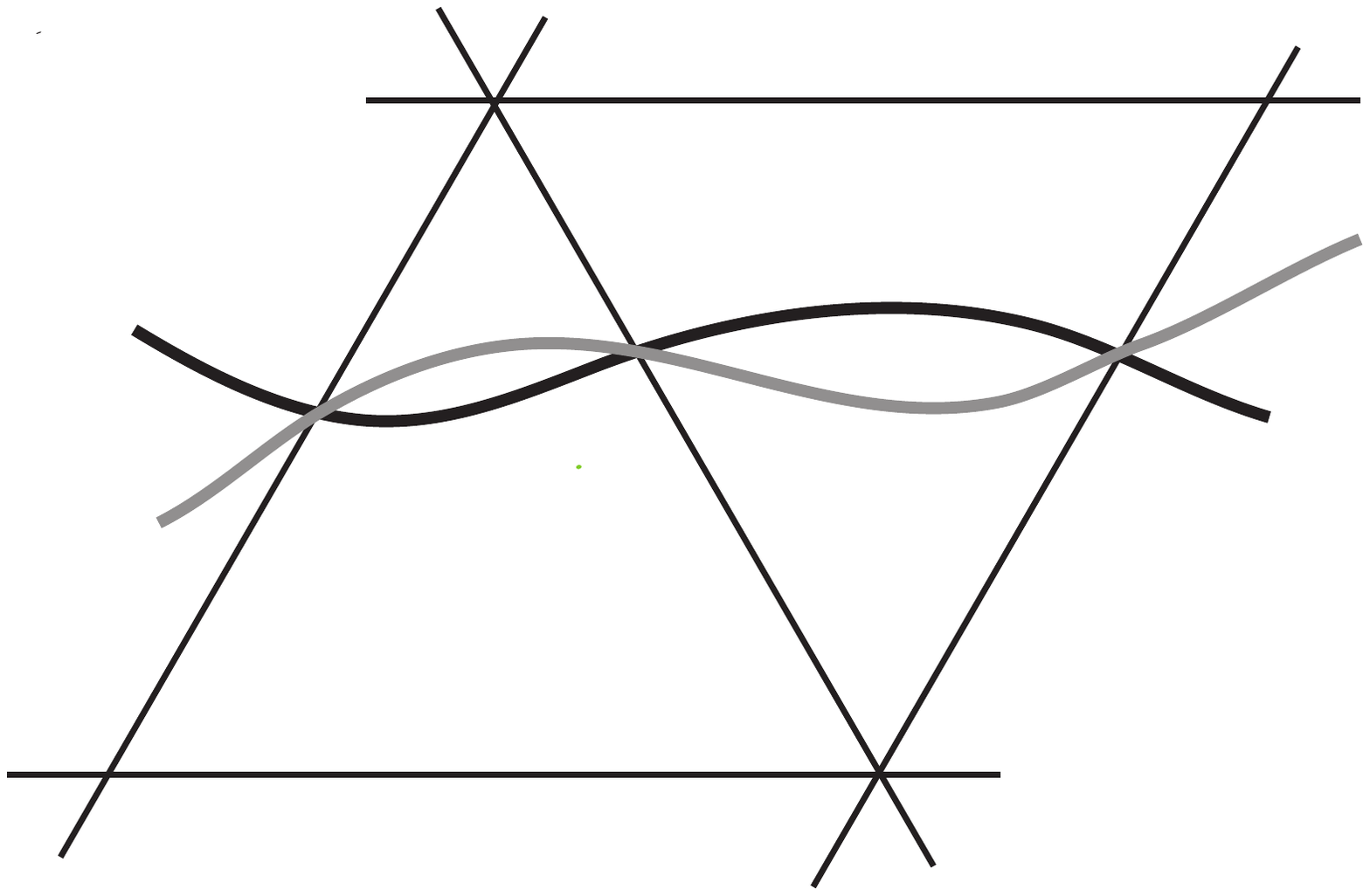}}
		\caption{}
		\label{fig:composewithdouble1} 
	\end{subfigure}
	\begin{subfigure}[b]{0.3\textwidth}
		\centering
		\scalebox{.3}{\includegraphics{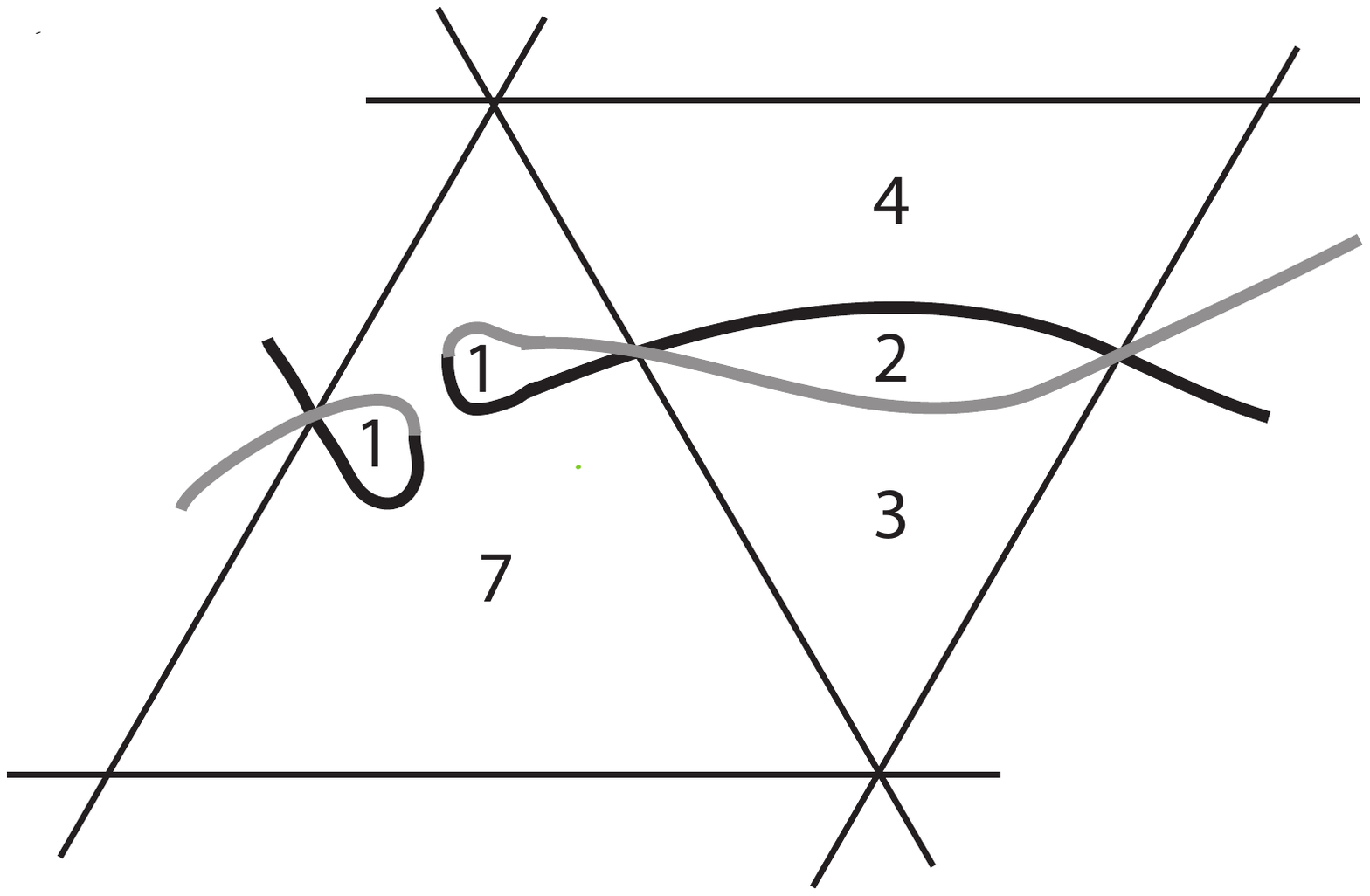}}
		\caption{}
		\label{fig:composewithdouble2} 
	\end{subfigure}
	\begin{subfigure}[b]{0.3\textwidth}
		\centering
		\scalebox{.3}{\includegraphics{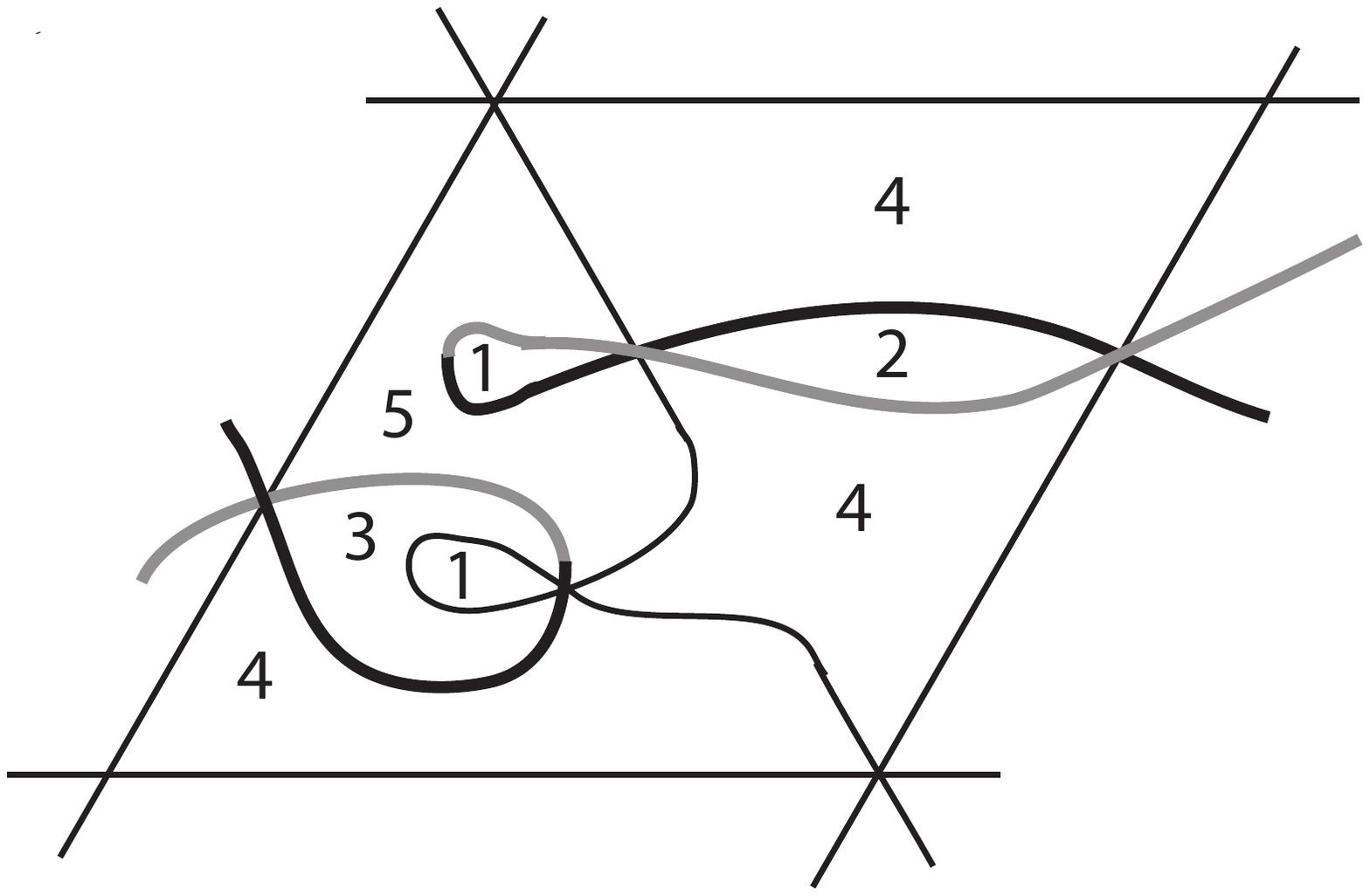}}
		\caption{}
		\label{fig:composewithdouble3} 
	\end{subfigure}
	\begin{subfigure}[b]{0.3\textwidth}
		\centering
		\scalebox{.3}{\includegraphics{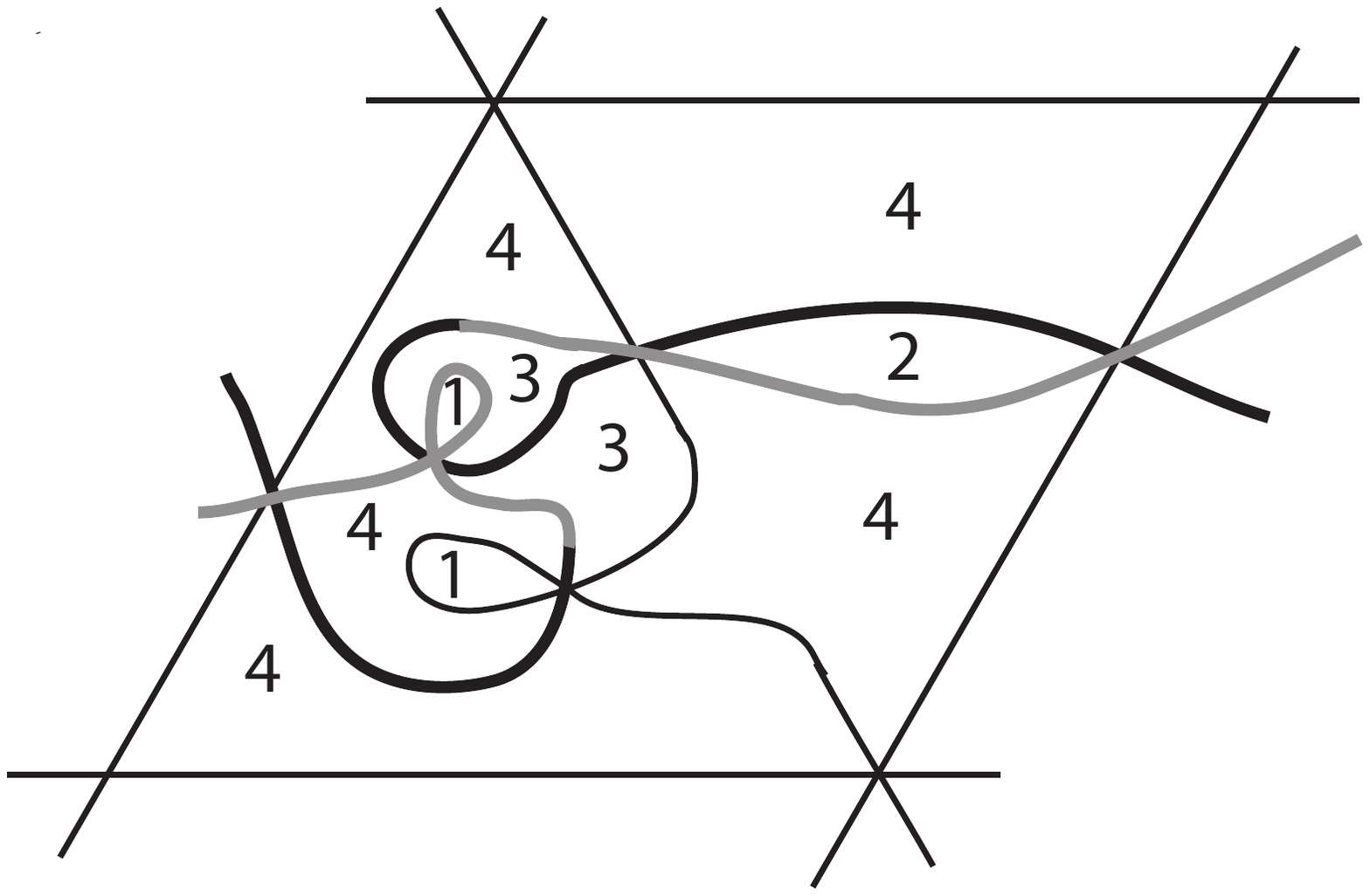}}
		\caption{}
		\label{fig:composewithdouble4} 
	\end{subfigure}
\caption{Composition with the doubling knot. (a) Beginning projection. (b) Composition of the original knot and the doubling knot.  
		(c) Trivial 3-crossing to fix the 7-gon.  (d) Trivial 3-crossing to fix the 5-gon.}
\label{fig:compositionwithdouble} 
\end{figure}

\begin{figure}[h!]
	\begin{subfigure}[b]{0.3\textwidth}
		\centering
		\scalebox{.3}{\includegraphics{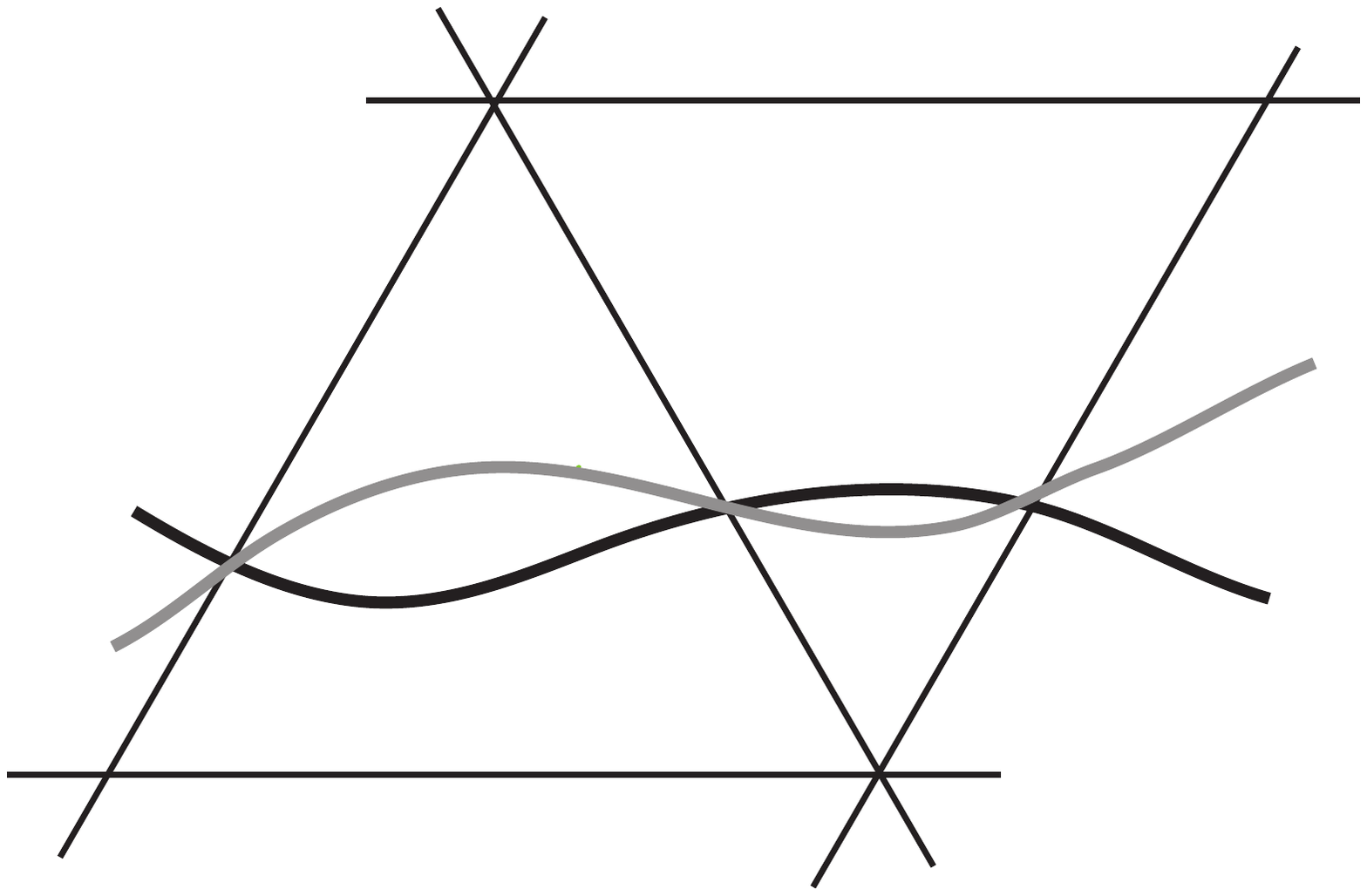}}
		\caption{}
		\label{fig:composewithtemplate1} 
	\end{subfigure}
	\begin{subfigure}[b]{0.3\textwidth}
		\centering
		\scalebox{.3}{\includegraphics{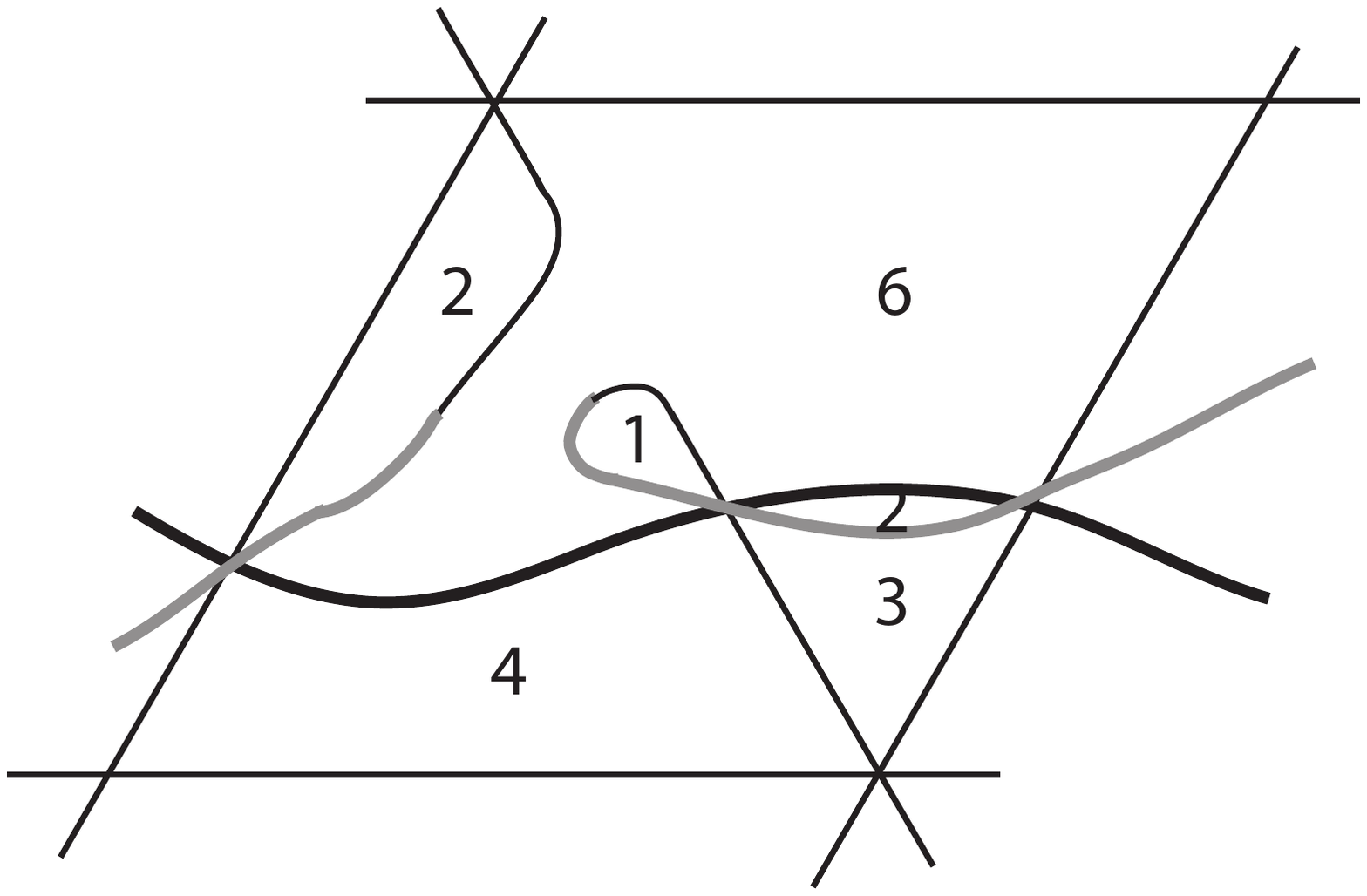}}
		\caption{}
		\label{fig:composewithtemplate2} 
	\end{subfigure}
	\begin{subfigure}[b]{0.3\textwidth}
		\centering
		\scalebox{.3}{\includegraphics{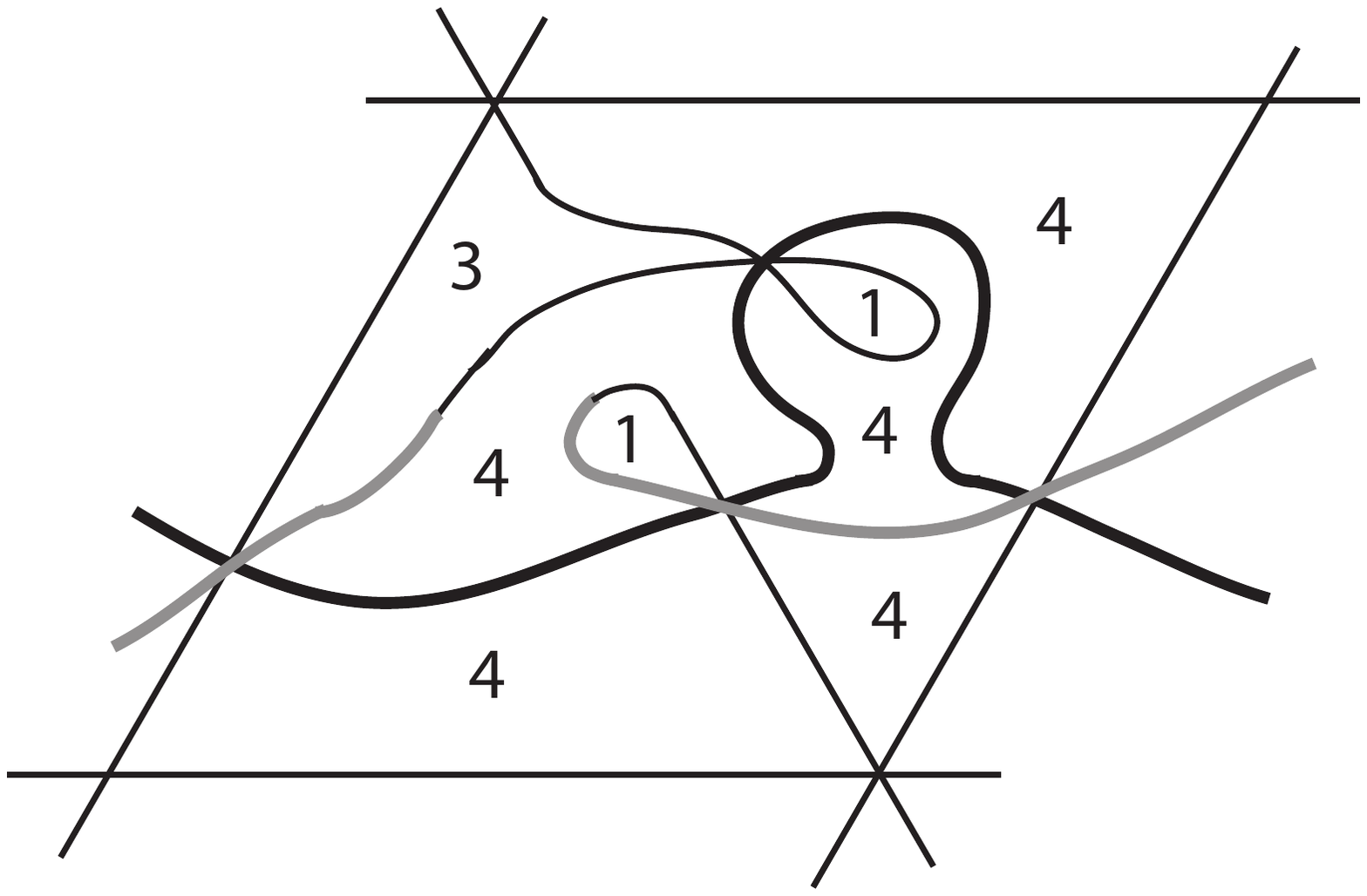}}
		\caption{}
		\label{fig:composewithtemplate3} 
	\end{subfigure}
\caption{Composition with the template knot.  (a) Beginning projection.  (b) Composition of the doubling knot and the template.  
		(c) Trivial 3-crossing to fix the 6-gon.}
\label{fig:compositionwithtemplate} 
\end{figure}

Notice that we need at least two consecutive triangles as in Figure \ref{fig:trianglewithsinglestranddoubled}.  
However, this is guaranteed by the requirement 
that the square grid be fine enough to contain two consecutive squares in a row that contain only single horizontal strands.
\end{proof}


\section{(3n)-Crossing Projections}

This section will prove that all $(3n)$-crossing projections realize the sequence $(1,2,3,4)$, for all $n\geq 1$.  
This proof relies on the following method, which we will call the \emph{doubling method}.  
This method is a generalization of the doubling used in the above proof.  Given an $n$-crossing knot projection $P$, 
take a copy $P'$ of $P$ with crossing data changed so that $P'$ is trivial.  Lay $P'$ directly on top of $P$, and perturb $P'$ slightly so that $P'$ touches $P$ only 
at crossing points, and $P'$ travels straight through the crossing. If there are an even number of crossings in $P$ or if $n$ is even, this 
yields a $(2n)$-crossing link projection of $P$ and a trivial knot.  If there are an odd number of crossings in $P$ and $n$ is odd, this yields a $(2n)$-crossing 
projection of $P$, since this will cause $P'$ and $P$ to be composed.

We can use this method to obtain a $(3n)$-crossing template knot, as shown in Figure \ref{fig:3ntemplate}.

\begin{lemma} \label{lem:3ntemplate} There exists a $(3n)$-crossing knot projection with arbitrarily large 
central hexagonal region tiled by equilateral triangles bounded by bigons, which realizes the sequence $(1,2,3)$.
\end{lemma}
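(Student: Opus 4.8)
The plan is to obtain the desired $(3n)$-crossing template by applying the doubling method to the $3$-crossing template knot of Lemma~\ref{3template}. First I would use Lemma~\ref{3template} to fix a $3$-crossing knot projection $T$ whose central region is a hexagon of the desired size tiled by equilateral triangles and which realizes $(1,3)$; recall that $T$ is built from three monogons together with the triangles of the hexagon and the triangles formed outside it by the tail-edge loops. To pass from multiplicity $3$ to multiplicity $3n$ I would lay $n-1$ trivial copies $T_1,\dots,T_{n-1}$ of $T$ in thin parallel bands alongside $T$, perturbing each copy so that it meets the other strands only at the singular points and runs straight through every crossing. Since every crossing of $T$ already carries $3$ strands and each added copy contributes $3$ more, every singular point of the resulting projection $T^{(n)}$ is a $3n$-crossing; for $n=1$ we simply take $T^{(1)}=T$.

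Next I would read off the complementary regions of $T^{(n)}$. Two adjacent parallel bands cobound bigons, so each triangle of $T$, once all $n$ bands have been drawn through it, is replaced by a small central triangle surrounded by bigons; each monogon of $T$ becomes a monogon ringed by bigons; and the nested tail-edge loops outside the hexagon again bound only bigons. Hence every face of $T^{(n)}$ is a $1$-, $2$-, or $3$-gon, so $T^{(n)}$ realizes $(1,2,3)$, and its central region is still a hexagon tiled by equilateral triangles, now separated from one another by bigons, exactly as the statement requires. (For $n=1$ the conclusion is immediate, since a projection realizing $(1,3)$ a fortiori realizes $(1,2,3)$.)

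The step I expect to be the crux is showing that $T^{(n)}$ is a single-component projection (a knot) rather than a multi-component link; this is precisely the connectivity subtlety flagged in the description of the doubling method, where adding a single trivial copy to a $3$-crossing projection yields a connected diagram only under a parity hypothesis on the number of crossings. I would handle it by first arranging that $T$ has a convenient (say, odd) number of crossings: the crossing count of the template of Lemma~\ref{3template} is an explicit function of the number of triangles along each edge of the hexagon, which we are free to choose, and if necessary one may also insert a trivial $3$-crossing kink or perform a single local reconnection of the $n$ parallel bands at one point, so that traversing the diagram passes through all of them before closing up. Granting this, a two-coloring argument on the triangles of the template — in the same spirit as the one used in the proof of Theorem~\ref{thm:3crossing} to show that the doubling knot must return to its starting point — shows that $T^{(n)}$ is a single strand. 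The remaining bookkeeping, namely that closing up the multiplied tail-edge loops outside the hexagon creates no face of size outside $\{1,2,3\}$, is routine and can be checked directly from Figure~\ref{fig:generaltemplate}.
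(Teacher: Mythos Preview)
Your approach matches the paper's: start from the $3$-crossing template of Lemma~\ref{3template} and iterate the doubling method, adding one trivial parallel copy at a time, to reach multiplicity $3n$. The only refinement is that the paper handles the connectivity crux by computing that the template has an odd number of crossings for \emph{every} choice of hexagon size $m$, so the doubling method's parity criterion (odd number of crossings, odd multiplicity) applies automatically at each stage and none of your fallback devices (kinks, local reconnections, a separate two-coloring argument) are needed.
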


\begin{proof}  Take a template knot. Notice that if there are $m$ triangles along a side of the hexagon, then there are $(m+1) + (m+2) + \dots + (m+m+1) + 
(m+m) + (m+m-1) + \dots + (m+1) = 2m^2\sum_{i=1}^{m} i + 2m+1$ crossings, which is an odd number of crossings.  Using the doubling method, we obtain 
a $6$-crossing projection of the trivial knot.  Iterate this process by applying the doubling method to the original template knot so that each doubling knot 
does not touch the other doubling knots except at crossing points of the original template knot.  
This gives a $9$-crossing projection of the trivial knot.  Repeating this yields the desired $(3n)$-crossing knot projection for any value of $n$.
\end{proof}

\begin{figure}[h!]
	\centering
	\scalebox{.8}{\includegraphics{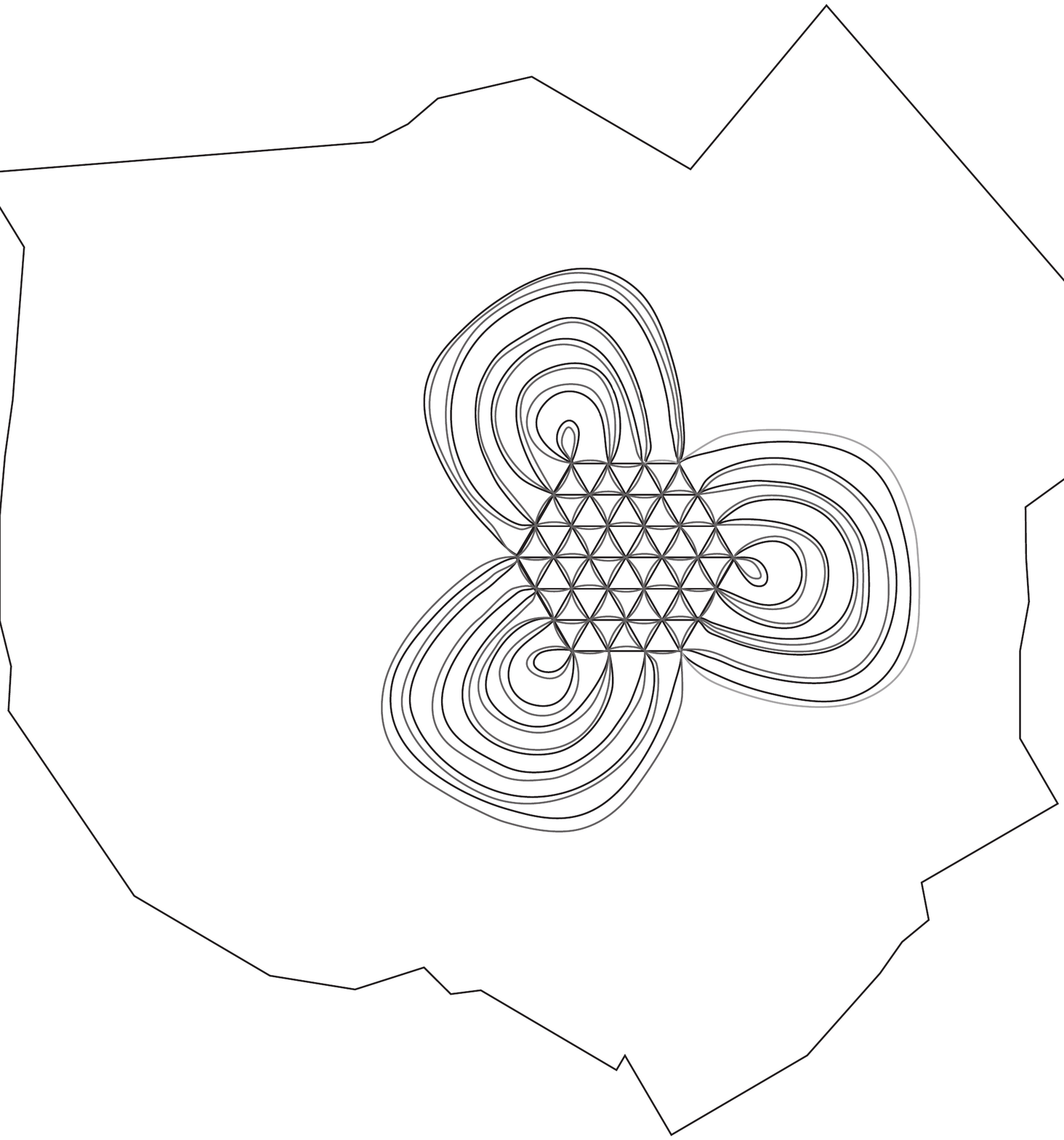}}
	\caption{A $6$-crossing template knot.  The light gray segments in the upper right corner of the hexagon show the point of composition.}
	\label{fig:3ntemplate} 
\end{figure}

We will call a knot projection of this type, with crossing data selected to make it the trivial knot, a \emph{$(3n)$-crossing template knot}. We will also use the move 
displayed in Figure \ref{fig:looptrick}, which we call the \emph{loop trick}.  
This takes two strands and introduces a trivial $n$-crossings between the two strands.  
The loop trick will be used to replace $5$-gons with faces with less numbers of sides.

\begin{figure}[h!]
	\begin{subfigure}[b]{0.3\textwidth}
		\centering
		\scalebox{.3}{\includegraphics{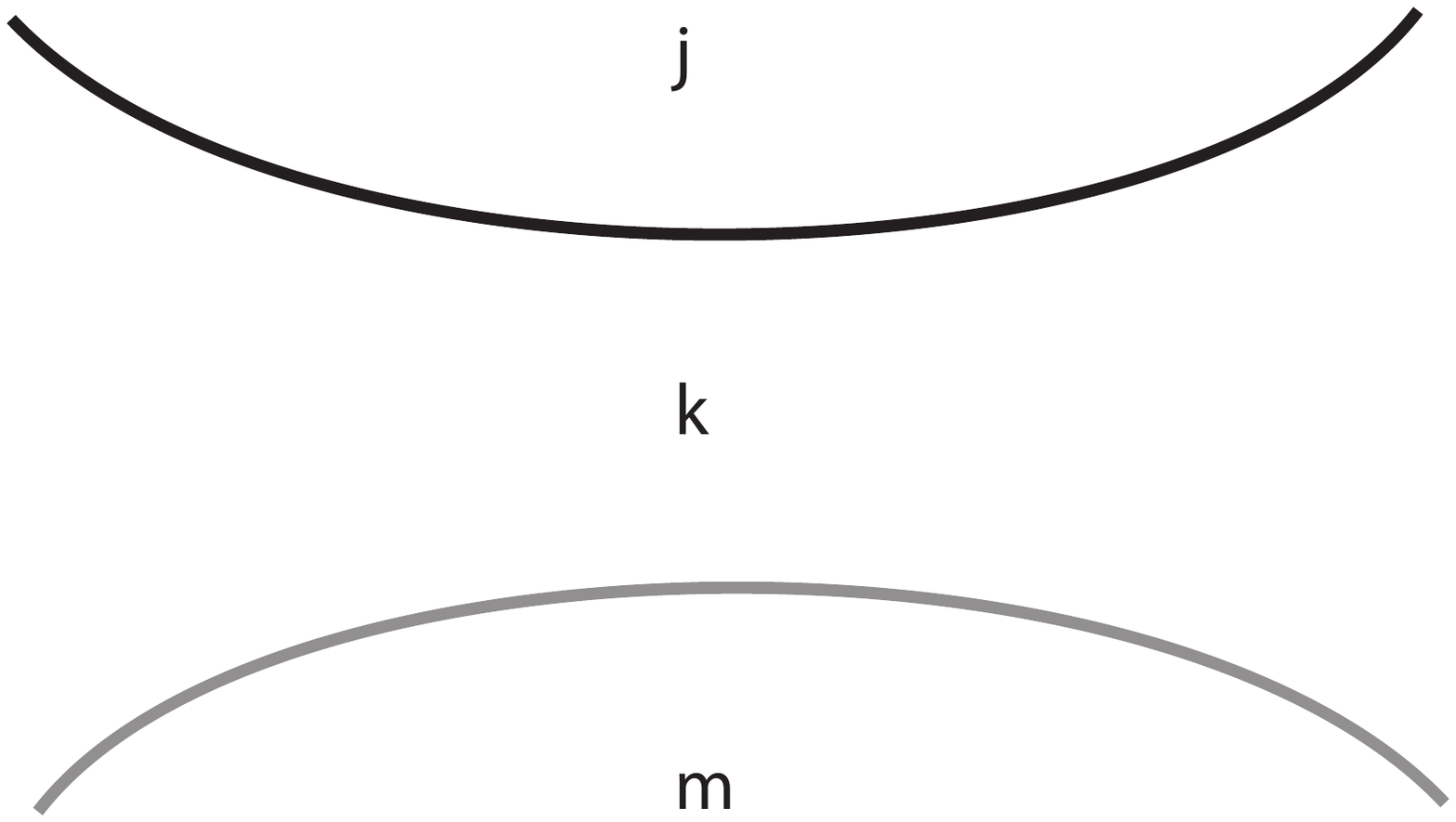}}
		\caption{}
		\label{fig:looptrick1} 
	\end{subfigure}
	\begin{subfigure}[b]{0.3\textwidth}
		\centering
		\scalebox{.3}{\includegraphics{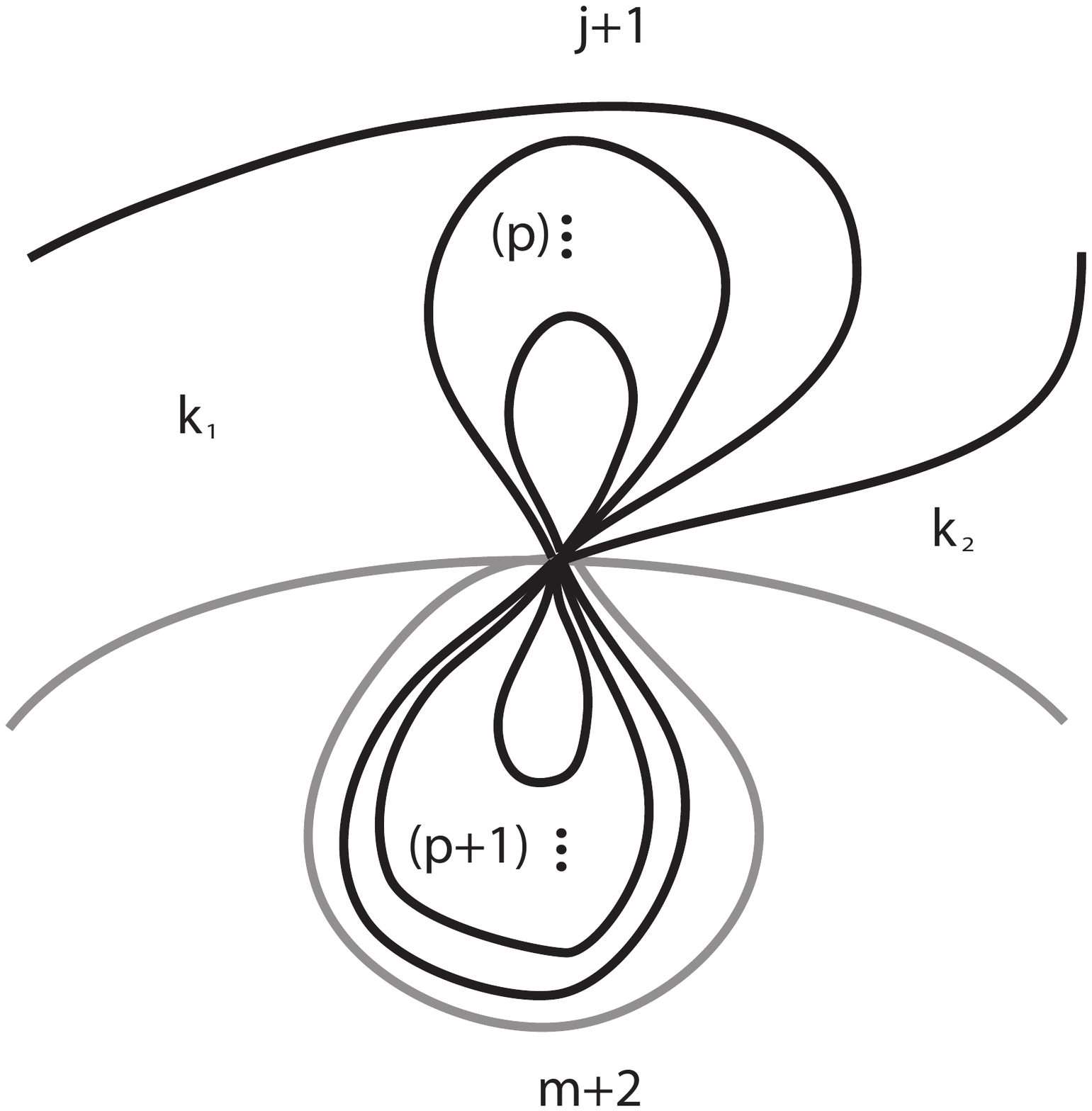}}
		\caption{}
		\label{fig:looptrick2} 
	\end{subfigure}
	\begin{subfigure}[b]{0.3\textwidth}
		\centering
		\scalebox{.3}{\includegraphics{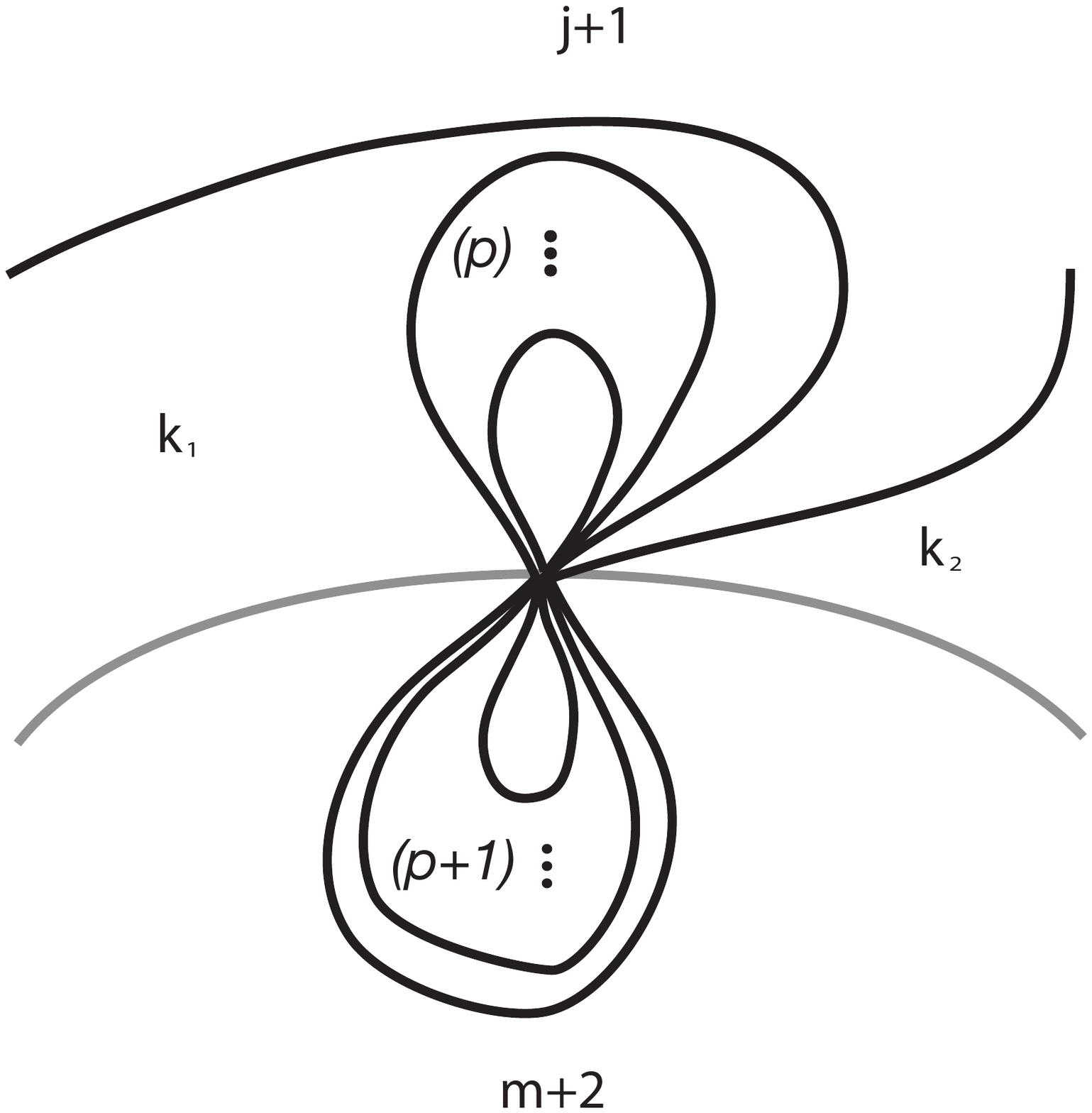}}
		\caption{}
		\label{fig:looptrick3} 
	\end{subfigure}
\caption{The loop trick. (a) Before the loop trick has been applied.  (b) An even multiplicity multi-crossing.  (c) An odd multiplicity multi-crossing.  
The labels $j, k, m$ denote the number of sides 
on each face of the complementary regions.  In (b) and (c), we have $k_1 + k_2 = k + 3$.}
\label{fig:looptrick} 
\end{figure}

\begin{thm} \label{thm:3ncrossing} The sequence $(1,2,3,4)$ is universal for $(3n)$-crossing projections.
\end{thm}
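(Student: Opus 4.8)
The plan is to bootstrap from the $3$-crossing case. Since the $(3n)$-crossing template knot of Lemma~\ref{lem:3ntemplate} is obtained from a $3$-crossing template by running the doubling method $n-1$ times, the natural strategy is to apply the same procedure to an \emph{arbitrary} $3$-crossing projection of $K$: first invoke Theorem~\ref{thm:3crossing} to get a $3$-crossing projection $Q$ of $K$ all of whose complementary regions are $1$-, $2$-, $3$-, or $4$-gons; then thicken $Q$ into a $(3n)$-crossing projection of $K$; and finally repair the few bad regions this introduces with the loop trick of Figure~\ref{fig:looptrick}. For $n=1$ there is nothing to do, so assume $n\geq 2$.

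For the thickening, I would first arrange, by inserting a single trivial $3$-crossing kink into $Q$ if necessary (this changes the parity of the number of crossings and only creates faces with at most $4$ sides), that $Q$ has an odd number of crossings. Then, exactly as in the proof of Lemma~\ref{lem:3ntemplate}, I would apply the doubling method $n-1$ times to $Q$: produce copies $D_1,\dots,D_{n-1}$ of $Q$, each with crossing data chosen to make it trivial, each laid alongside $Q$ so that the $D_j$ are pairwise disjoint away from the crossings of $Q$ and each $D_j$ passes straight through every crossing of $Q$. Since every $D_j$ contributes three strands at each crossing, after $n-1$ steps every crossing has become a $3n$-crossing; and because $Q$ has an odd number of crossings and the multiplicity $3$ is odd, the parity remark accompanying the doubling method guarantees that the result is a single component, hence a $(3n)$-crossing \emph{knot} projection, and it is a projection of $K$ because each trivial $D_j$ is merely connect-summed in.

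It remains to analyze the complementary regions. Along every edge of $Q$ the $n-1$ parallel strands of the $D_j$ cut out a stack of bigons, so away from small neighborhoods of the old crossings of $Q$ every face is either one of these bigons or a face inherited from $Q$ (shrunk, but with the same number of sides), and hence a $1$-, $2$-, $3$-, or $4$-gon. Near an old crossing of $Q$ the picture is $n$ interleaved rotated copies of three concurrent lines; there are only finitely many local configurations to inspect, namely the triangle types enumerated in Section 2 with the extra parallel strands drawn in, and a direct check shows each produces only $1$- through $4$-gons, with the single exception of one configuration that yields a $5$-gon. Such a $5$-gon is eliminated by one application of the loop trick: it is replaced by two faces whose numbers of sides sum to $5+3=8$, which we take to be $4$ and $4$, at the cost of a trivial $3n$-crossing whose own complementary regions are again $1$- through $4$-gons.

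The main obstacle is precisely this region count near the crossings. One must be certain that threading $n-1$ extra parallel strands through each multi-crossing --- and in particular the cyclic order in which these strands interleave --- never creates a face with six or more sides, since the loop trick raises the total number of sides by three and so cannot on its own bring a hexagon down to size at most $4$; and one must check that the $5$-gons that do appear are disjoint, so that they can be repaired independently. A secondary subtlety, inherited from the proof in Section 2, is to verify that the doubling knots $D_j$ close up as intended rather than forcing the diagram to wind around $K$ several times; this is settled by the same bicoloring-and-parity bookkeeping used there.
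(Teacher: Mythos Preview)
Your approach is genuinely different from the paper's. The paper does \emph{not} double the finished $3$-crossing projection of $K$. Instead it first builds the $(3n)$-crossing template $T$ of Lemma~\ref{lem:3ntemplate} (whose faces are already verified to be $1$-, $2$-, and $3$-gons), then lays a rectilinear projection $P$ of $K$ on $T$ exactly as in Section~2, and finally doubles \emph{only $P$} enough times to raise every $P$--$T$ intersection to multiplicity $3n$. The face analysis therefore reduces to the same three local pictures as in Figure~\ref{fig:doubledknot}, the lone $5$-gon is removed by the loop trick, and the compositions of Figures~\ref{fig:compositionwithdouble} and~\ref{fig:compositionwithtemplate} are performed at the very end, after all crossing multiplicities are already $3n$.

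Your bootstrap has a real gap at the step you yourself flag as the main obstacle. The $3$-crossing diagram $Q$ produced by Section~2 is not composed solely of the triangle types in Figure~\ref{fig:doubledknot}: it also contains the repaired $5$-gon of Figure~\ref{fig:fixingthe5gon} and, crucially, the composition regions of Figures~\ref{fig:compositionwithdouble} and~\ref{fig:compositionwithtemplate}. Your ``direct check'' does not cover these, so the assertion that exactly one $5$-gon configuration appears is unsupported. More fundamentally, the claim that each face of $Q$ is ``shrunk, but with the same number of sides'' presupposes that the parallel copies $D_j$ can be placed on a globally consistent side of every edge of $Q$; but the side a copy occupies along an edge is forced at each endpoint by the straight-through condition at that crossing, and there is no argument given that these local constraints are compatible across all of $Q$. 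If they are not, a $4$-gon of $Q$ can acquire extra corners and become a $6$-gon or larger, which the loop trick (adding three to the total side count) cannot repair. The paper avoids this entirely by decoupling the template doubling from the knot doubling and by postponing all compositions until after the multiplicity is set. A smaller issue: the parity clause you invoke covers a single application of the doubling method, not the simultaneous insertion of $n-1$ copies, so single-componentness for $n\geq 3$ also needs its own argument.
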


\begin{proof} Given a knot $K$.  Begin with a $(3n)$-crossing template knot $T$ as constructed in Lemma \ref{lem:3ntemplate}.  Lay a rectilinear projection $P$ of $K$ 
on the template as in the $3$-crossing case.  Double this knot.  Take a trivial copy of $P$, named $P'$, and lay it on top of $P$.  Perturb $P'$ slightly so that 
it crosses $P$ only at intersections of $P$ and $T$. The only cases in which $P$ crosses itself are in triangles of Type \ref{fig:trianglewithcrossing}.  
Perturb $P'$ as in Figure \ref{fig:trianglewithdoubledcrossing}.  
Notice this adds one to the multiplicity of 
each intersection of $P$ with $T$.  Repeat this doubling until the multiplicity of each such crossing is $(3n)$. 

After this process, there are three types of triangles, as above.  There is one case of a $5$-gon, which can be fixed using the loop trick.

This yields a link projection of several trivial doubling knots, the original knot, and the trivial template knot.  Compose $P$ and the doubling knots as shown in 
Figure \ref{fig:compositionwithdouble}, and compose $P$ with $T$ as before, shown in Figure \ref{fig:compositionwithtemplate}.  Again, we can size $P$ and 
$T$ appropriately so that there are enough triangles as in Figure \ref{fig:trianglewithsinglestranddoubled} that this is possible.
\end{proof}


\section{$(3n+2)-$ and $(3n+4)-$Crossing Case}

Consider a $(3n)$-crossing template knot with an odd number of triangles along each edge of the central hexagonal region.  
Add a loop of the type in Figure \ref{fig:leftloops} to each crossing in the top row and alternating rows, and a loop of 
the type in Figure \ref{fig:rightloops} to the crossings in the other rows.  Notice that these loops are 
not the same as the type in the loop trick.  This gives a $(3n+2)$-crossing knot with a central hexagonal region tiled by 
triangles with an extra loop and bigons.  Further, this knot realizes the sequence $(1, 2, 3, 4)$.  Choose crossing data so that this knot is trivial.  
We will call this knot the \emph{$(3n+2)$-crossing template knot}.

\begin{figure}[h!]
	\begin{subfigure}[b]{0.3\textwidth}
		\centering
		\scalebox{.4}{\includegraphics{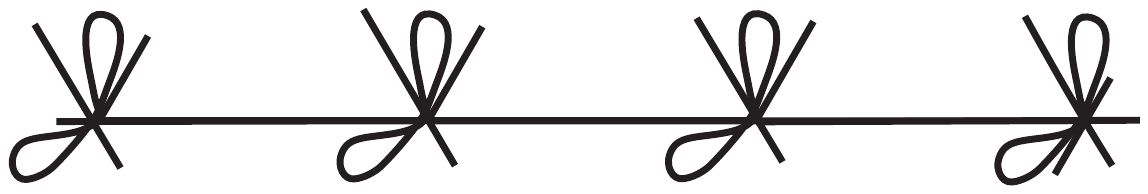}}
		\caption{}
		\label{fig:leftloops} 
	\end{subfigure}
	\begin{subfigure}[b]{0.3\textwidth}
		\centering
		\scalebox{.4}{\includegraphics{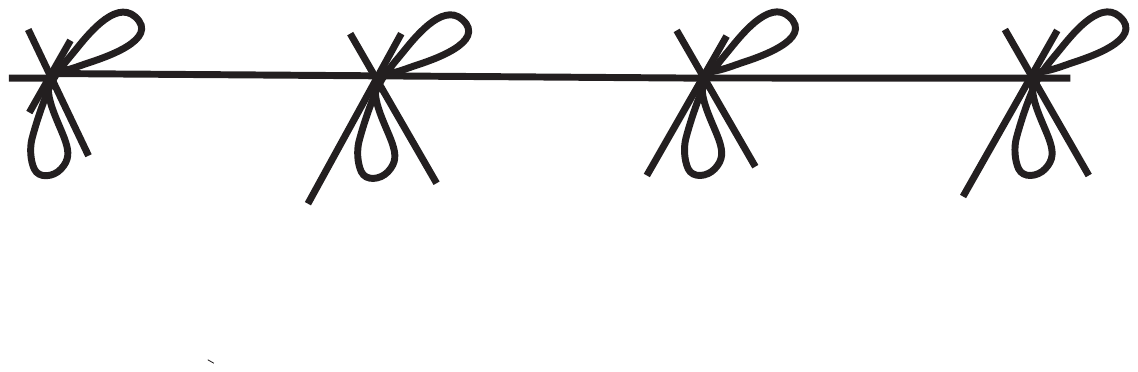}}
		\caption{}
		\label{fig:rightloops} 
	\end{subfigure}
\caption{(a) Type of loops on the top row of template knot and alternating rows below that.  (b) Type of loops on second row and alternating rows below that.}
\label{fig:3n2loops} 
\end{figure}

We will use this template knot to prove that the sequence $(1,2,3,4)$ is universal for $(3n+2)$- and $(3n+4)$-crossing knots.  

\begin{thm}\label{3ncrossing} The sequence $(1,2,3,4)$ is universal for all $(3n+2)$-crossing knot projections.
\end{thm}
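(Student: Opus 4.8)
The plan is to run the doubling argument of Theorem~\ref{thm:3ncrossing} essentially verbatim, using the $(3n+2)$-crossing template knot constructed above (built from a $(3n)$-crossing template by attaching loops of the two types in Figure~\ref{fig:3n2loops}) in place of the $(3n)$-crossing template; the one thing to watch is that these extra loops do not spoil the local analysis or the face count. Concretely, given a knot $K$, take a $(3n+2)$-crossing template knot $T$ and lay a rectilinear projection $P$ of $K$ over the equilateral-triangle grid underlying $T$ exactly as in the $3$- and $(3n)$-crossing cases: rescale so that the corners and self-crossings of $P$ lie on alternating triangles, and so that at least two consecutive triangles carry only a single strand of $P$ (as in Figure~\ref{fig:trianglewithsinglestranddoubled}). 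Since the loops of $T$ sit at grid vertices while $P$ meets only the interiors of triangles and the relative interiors of grid edges, inside each triangle we see exactly the four configurations of the $3$-crossing case; the ``corner'' type is eliminated by a Reidemeister~II move, leaving three.

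Next, run the doubling method: repeatedly take a trivial copy of $P$, lay it alongside $P$, and perturb so that it meets $P$ only at crossings of $P$ with $T$, using the move of Figure~\ref{fig:trianglewithdoubledcrossing} at the self-crossing triangles of $P$. Each pass raises the multiplicity at every crossing of $P$ with $T$ by exactly one, so after finitely many passes this multiplicity equals $3n+2$, matching the crossings of $T$ itself. As in the $3$-crossing proof, a bicoloring of the triangles of $T$ shows that each doubling copy closes up into a single trivial component rather than being forced to traverse $P$ twice. Of the three triangle types, only one produces a $5$-gon, and this is replaced by faces with fewer sides via the loop trick of Figure~\ref{fig:looptrick}, here inserting a trivial $(3n+2)$-crossing; so every complementary region stays among $\{1,2,3,4\}$.

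What remains is a link made of the original knot, several trivial doubling knots, and the trivial template $T$. Composing these one at a time along the guaranteed single-strand triangles, exactly as in Figures~\ref{fig:compositionwithdouble} and~\ref{fig:compositionwithtemplate} and splitting each temporarily created $5$-, $6$-, or $7$-gon by an added trivial $(3n+2)$-crossing, produces a single $(3n+2)$-crossing projection of $K$ all of whose faces are monogons, bigons, triangles, or quadrilaterals.

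The main obstacle I expect is the face bookkeeping in the presence of the template's loops: one must check that the loops of Figure~\ref{fig:3n2loops}, together with the many near-parallel strands deposited by repeated doubling near the grid vertices and at the composition points, never bound a face with five or more sides, and that the multiplicity count closes up so that every crossing --- original, template, loop, and every trivial crossing added to destroy a large face --- ends with multiplicity exactly $3n+2$, while still leaving enough single-strand triangles to carry out the compositions. Since the loops are the only genuinely new ingredient compared with the $(3n)$-crossing case, this is where the argument needs the most care.
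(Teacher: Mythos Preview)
Your overall strategy matches the paper's --- overlay a rectilinear projection $P$ on the $(3n+2)$-template $T$, double until every crossing has multiplicity $3n+2$, kill stray $5$-gons with the loop trick, then compose --- but the local triangle analysis has a genuine gap. You assume that ``the loops of $T$ sit at grid vertices while $P$ meets only the interiors of triangles and the relative interiors of grid edges,'' and hence that the picture inside each triangle is identical to the $(3n)$ case. It is not: the loops of Figure~\ref{fig:3n2loops} protrude into the adjacent triangles, so a strand of $P$ running through a triangle interacts with them. Because the loops point in opposite directions on alternating rows, this is also why the paper constrains the placement of $P$ so that its horizontal strands lie only on the highlighted rows of Figure~\ref{fig:3n2templatecolored}; without that restriction the case analysis multiplies.

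With the loops taken into account, the self-crossing triangle splits into two distinct local pictures according to which loop type sits at its vertices, so after discarding corners there are \emph{four} triangle types (Figure~\ref{fig:3n2triangles}), not three. More importantly, \emph{each} of the four now produces a $5$-gon, not just one; all four must be repaired by the loop trick as in Figure~\ref{fig:3n25gonsfixed}. Your claim that ``only one produces a $5$-gon'' is inherited from the $(3n)$ case and is false here. Once you redo the triangle bookkeeping along these lines --- and impose the row restriction on $P$ --- the remainder of your argument (closure of the doubling copies, compositions as in Figures~\ref{fig:compositionwithdouble} and~\ref{fig:compositionwithtemplate}) goes through as written.
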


\begin{proof} Given a knot $K$, take a polygonal projection $P$ as above and lay it on top of a $(3n+2)$-crossing template knot $T$ so that 
all the horizontal strands of $P$ lie on rows of the kind highlighted in Figure \ref{fig:3n2templatecolored}. Double this 
projection as in the proof of Theorem \ref{thm:3ncrossing} so that all the crossings have multiplicity $3n+2$.  This gives the four types of 
triangles shown in Figure \ref{fig:3n2triangles}.  Each of these triangles yields a $5$-gon, which can be fixed with the loop trick as shown in Figure \ref{fig:3n25gonsfixed}.

\begin{figure}
	\centering
	\scalebox{.5}{\includegraphics{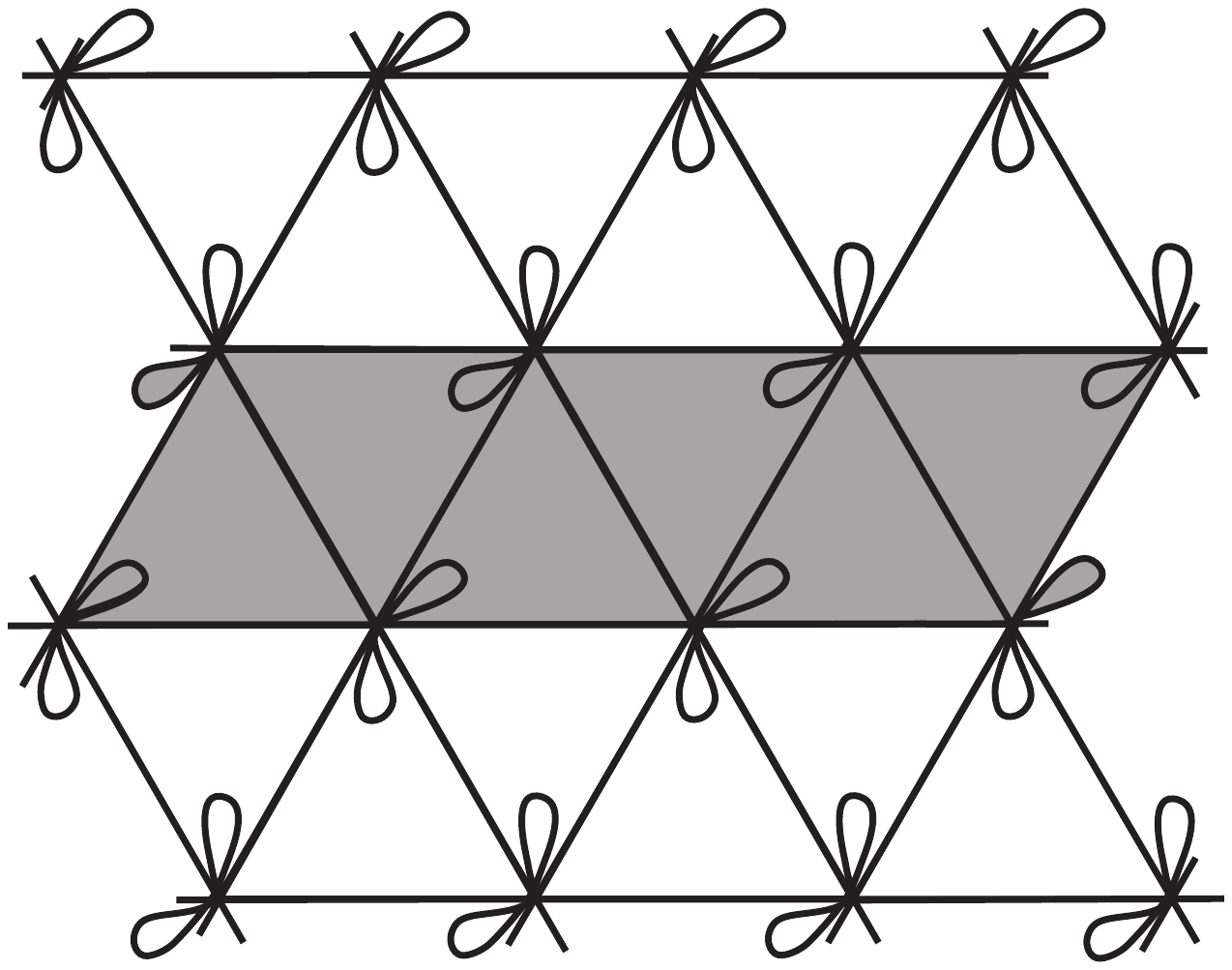}}
	\caption{}
	\label{fig:3n2templatecolored}
\end{figure}

\begin{figure}[h!]
	\begin{subfigure}[b]{0.3\textwidth}
		\centering
		\scalebox{.3}{\includegraphics{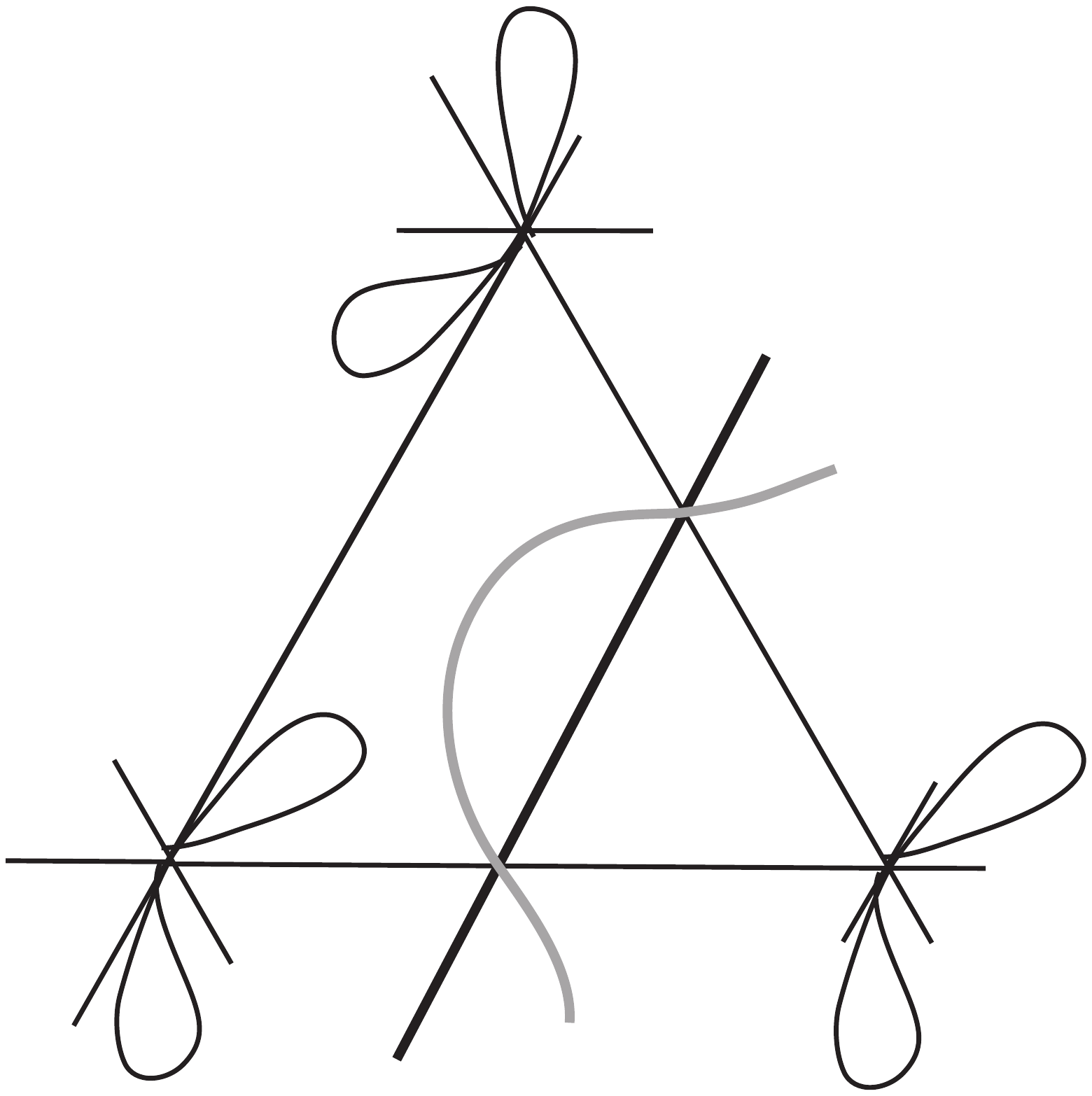}}
		\caption{}
		\label{fig:3n2singlestrand} 
	\end{subfigure}
	\begin{subfigure}[b]{0.3\textwidth}
		\centering
		\scalebox{.3}{\includegraphics{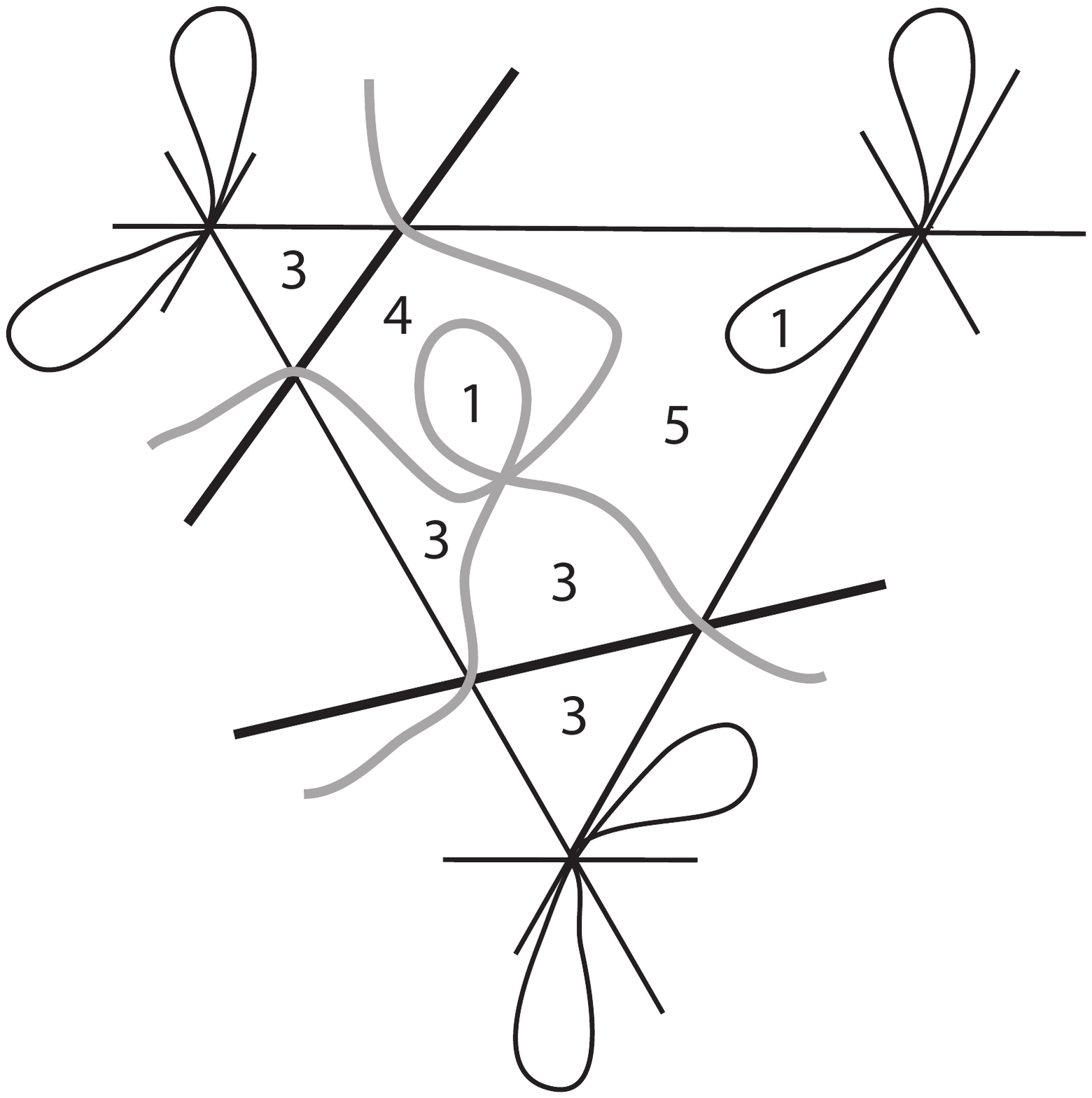}}
		\caption{}
		\label{fig:3n2twouncrossingstrands2} 
	\end{subfigure}
	\begin{subfigure}[b]{0.3\textwidth}
		\centering
		\scalebox{.3}{\includegraphics{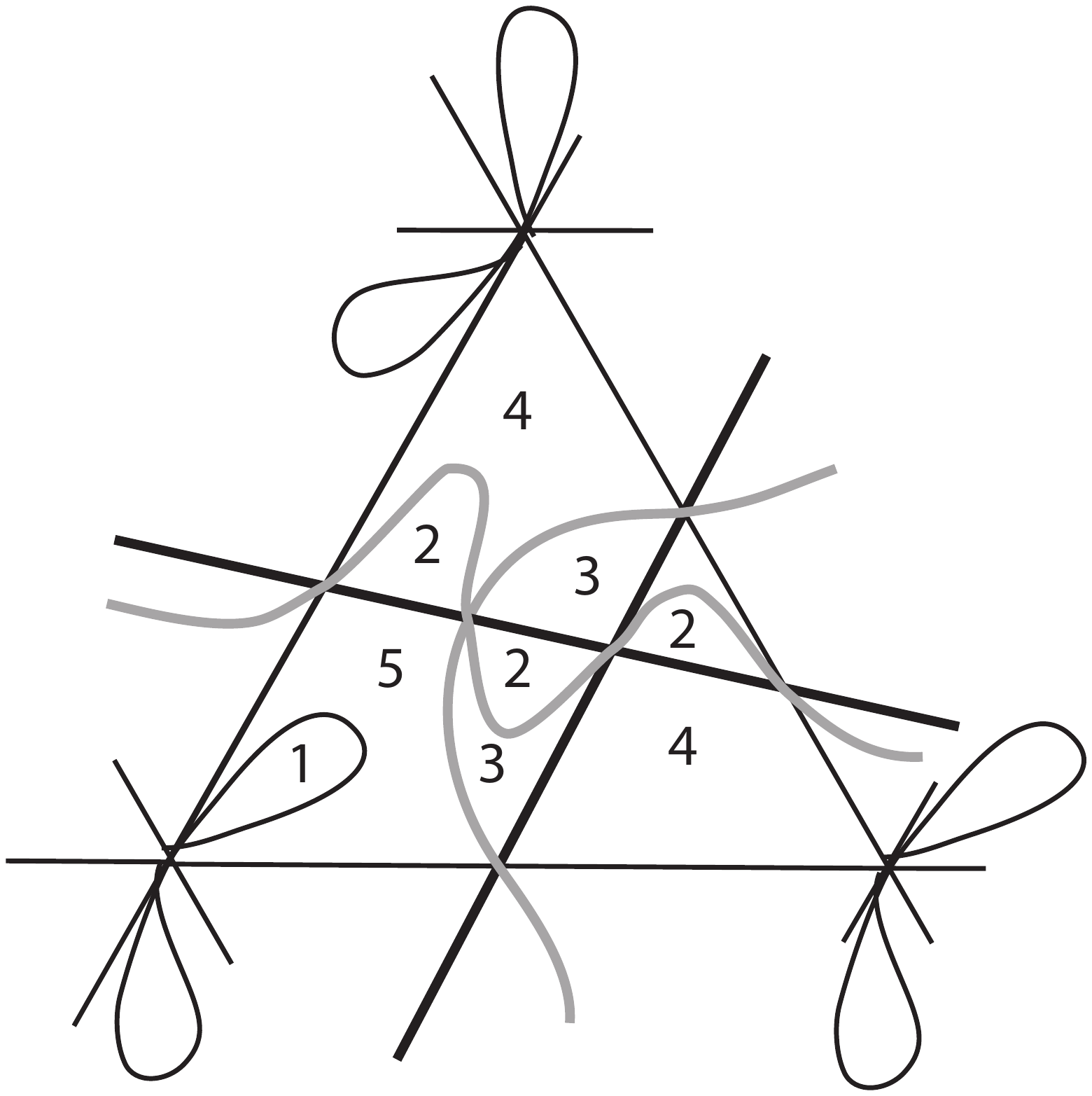}}
		\caption{}
		\label{fig:3n2crossingstrandsA2} 
	\end{subfigure}
	\begin{subfigure}[b]{0.3\textwidth}
		\centering
		\scalebox{.3}{\includegraphics{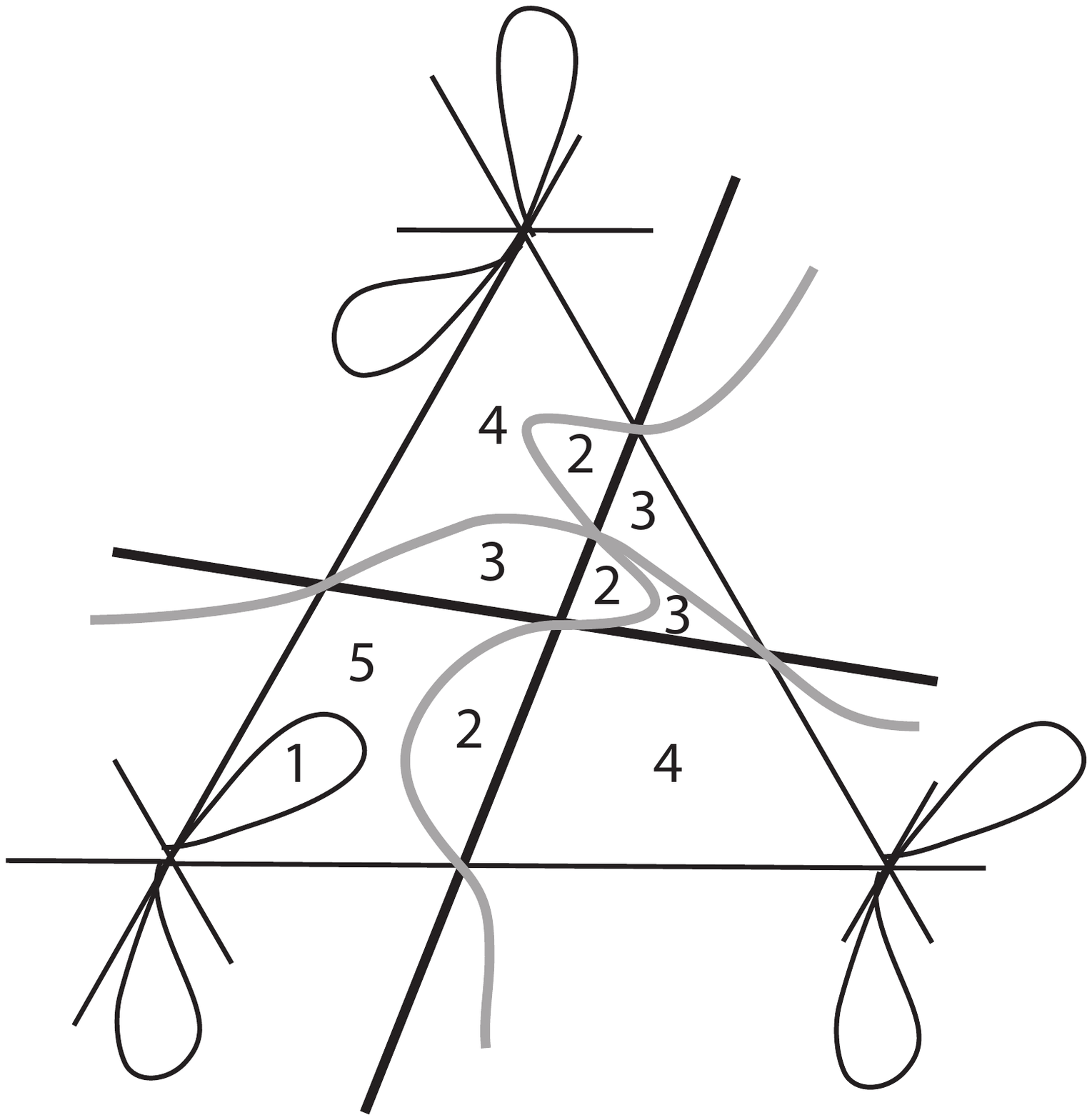}}
		\caption{}
		\label{fig:3n2crossingstrandsB2} 
	\end{subfigure}
\caption{}
\label{fig:3n2triangles} 
\end{figure}

\begin{figure}[h!]
	\begin{subfigure}[b]{0.3\textwidth}
		\centering
		\scalebox{.3}{\includegraphics{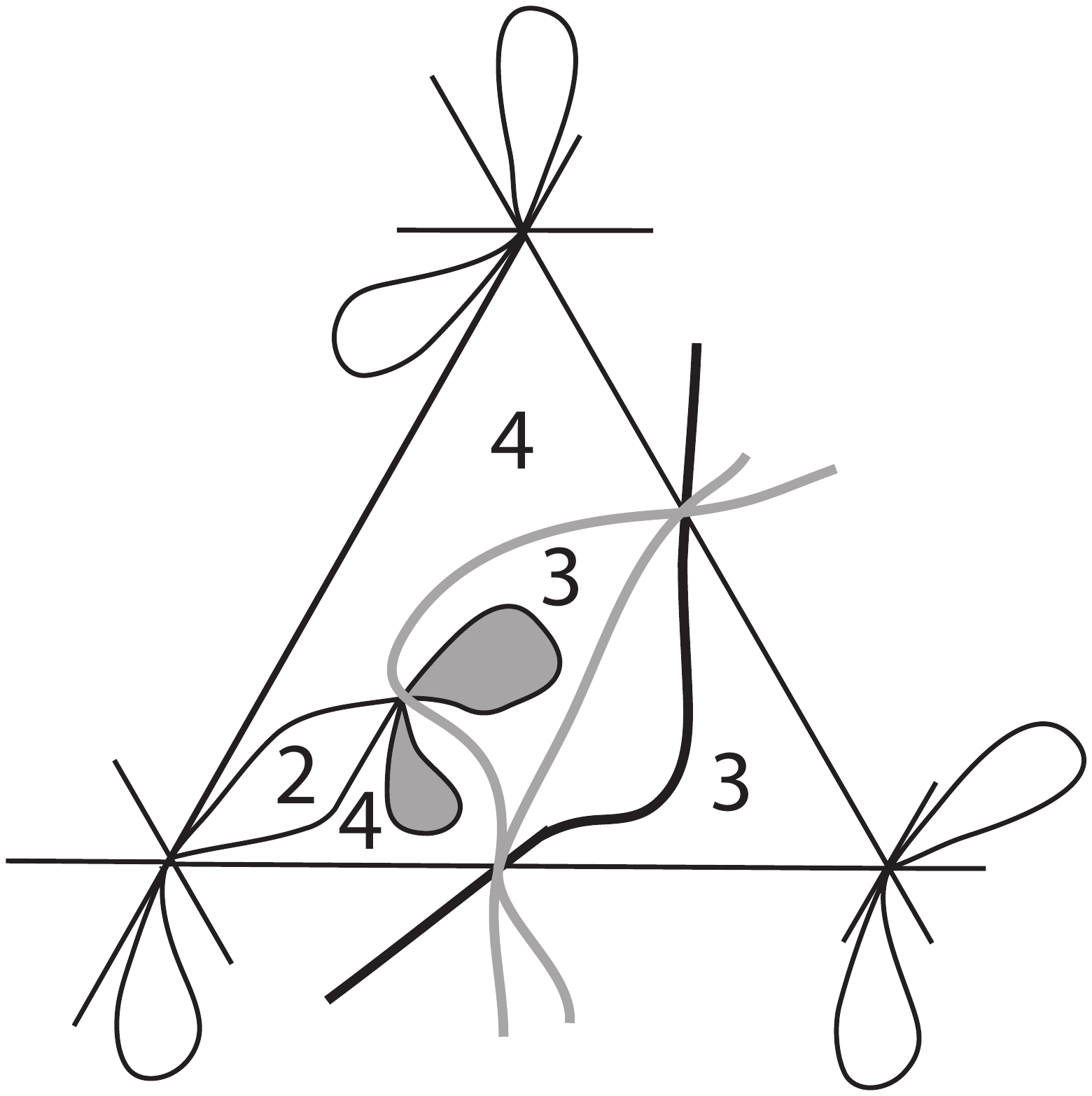}}
		\caption{}
		\label{fig:3n2singlestrand} 
	\end{subfigure}
	\begin{subfigure}[b]{0.3\textwidth}
		\centering
		\scalebox{.3}{\includegraphics{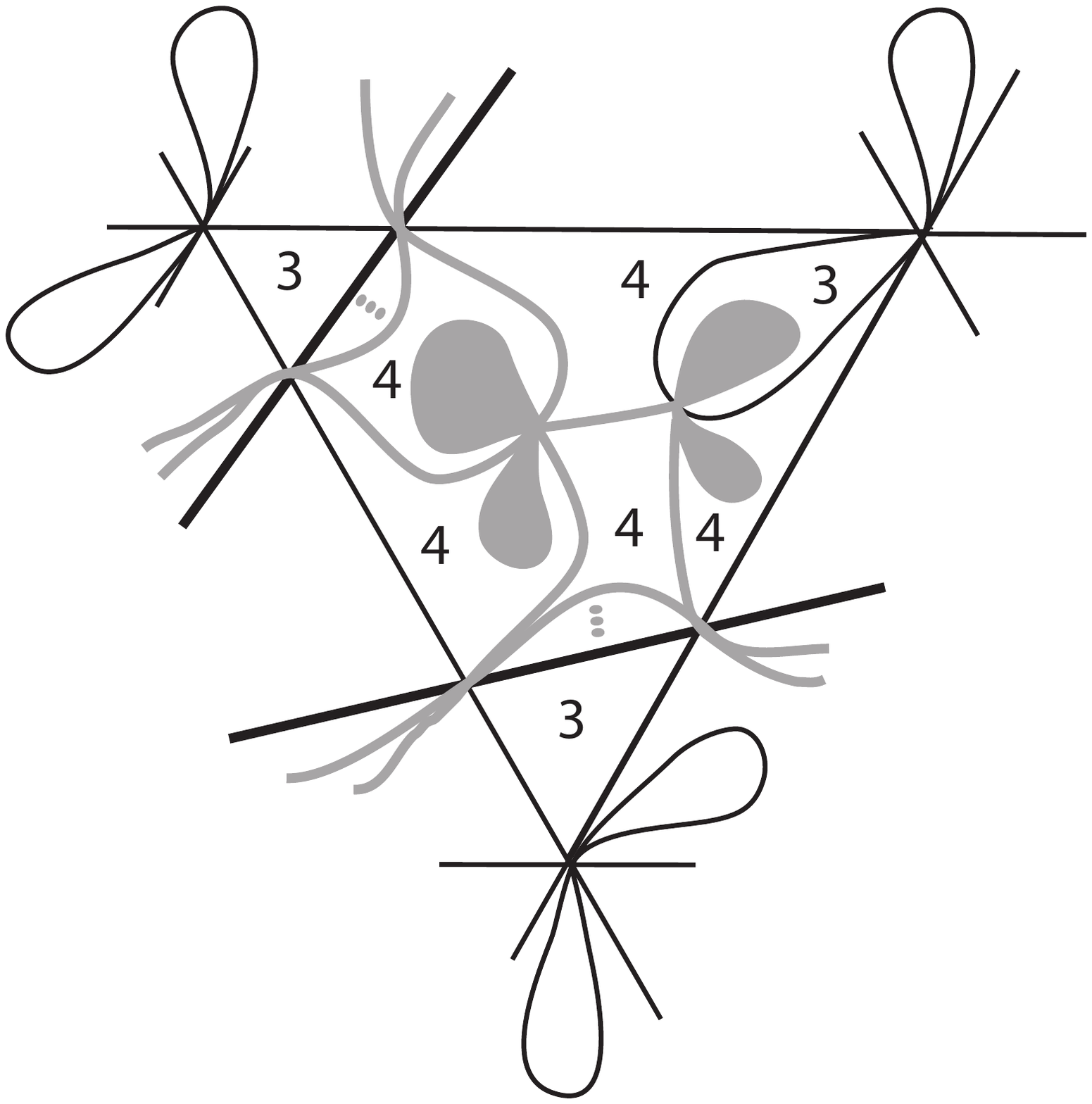}}
		\caption{}
		\label{fig:3n2twouncrossingstrands2} 
	\end{subfigure}
	\begin{subfigure}[b]{0.3\textwidth}
		\centering
		\scalebox{.3}{\includegraphics{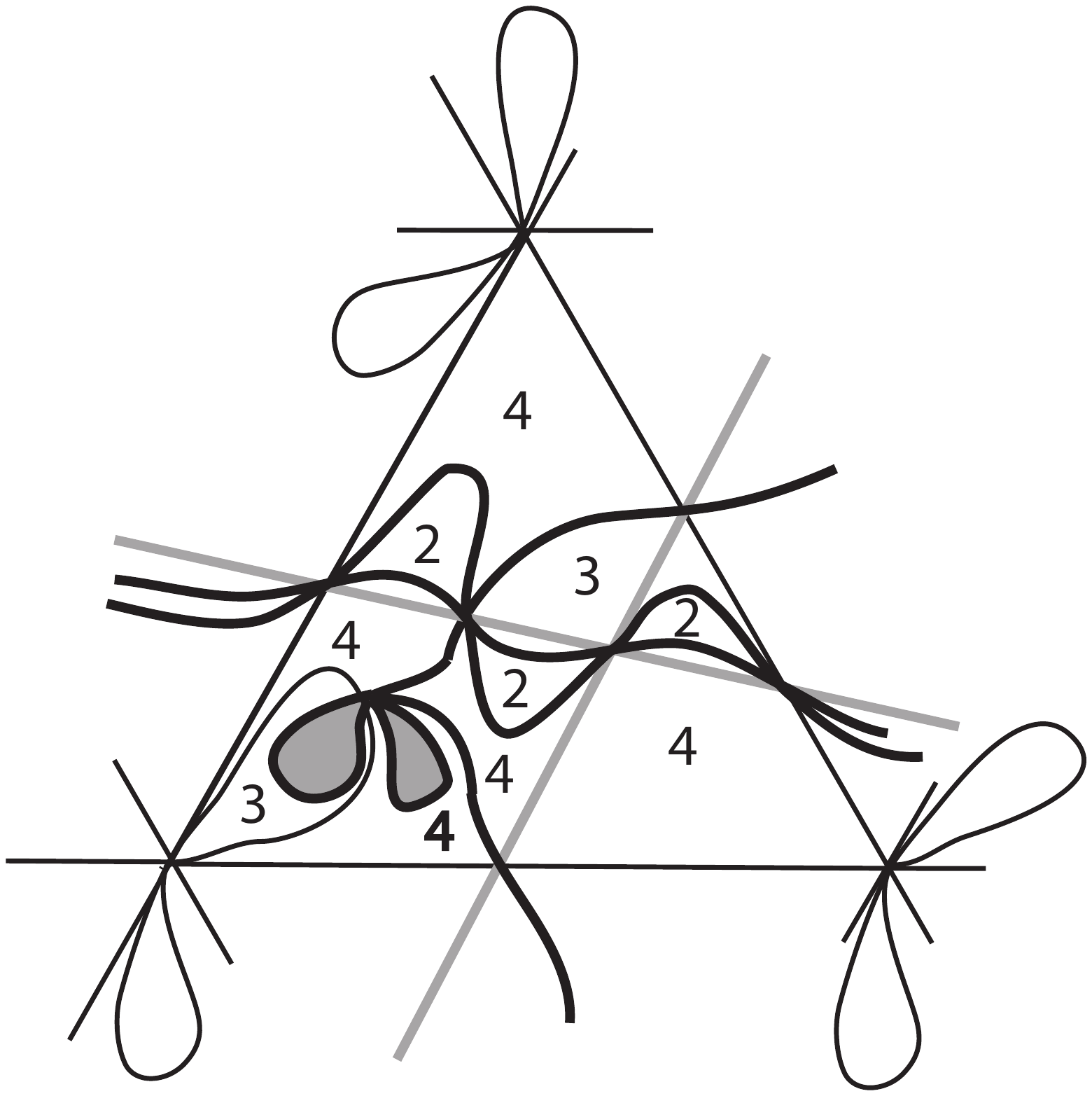}}
		\caption{}
		\label{fig:3n2crossingstrandsA2} 
	\end{subfigure}
	\begin{subfigure}[b]{0.3\textwidth}
		\centering
		\scalebox{.3}{\includegraphics{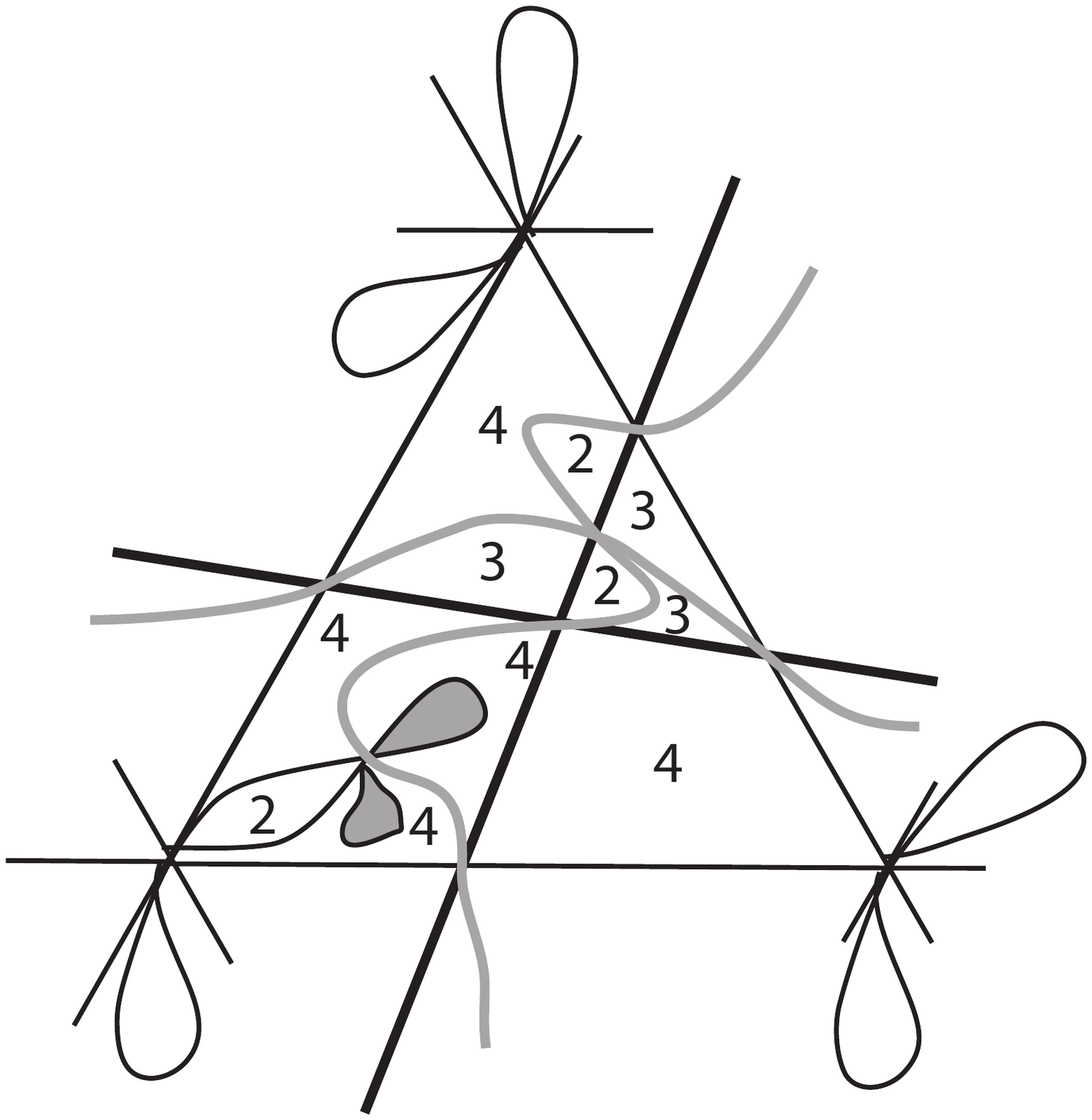}}
		\caption{}
		\label{fig:3n2crossingstrandsB2} 
	\end{subfigure}
\caption{The shaded regions represent nested loops of the kind produced by the loop trick: see Figure \ref{fig:looptrick}}
\label{fig:3n25gonsfixed} 
\end{figure}

We need only compose $P$ with the doubling knots and $P$ with $T$.  We compose as for the $3n$-crossing case.
\end{proof}

We now want to consider the $(3n+4)$-crossing case.  But we can do a very similar trick to show that $(1,2,3,4)$ is universal for this situation; begin with 
a $(3n+2)$-crossing template knot, and add another loop to each of the intersections as in Figure \ref{fig:3n4template}.  This gives a $(3n+4)$-crossing template knot 
which realizes the sequence $(1,2,3,4)$.

\begin{figure}[h!]
	\centering
	\scalebox{.3}{\includegraphics{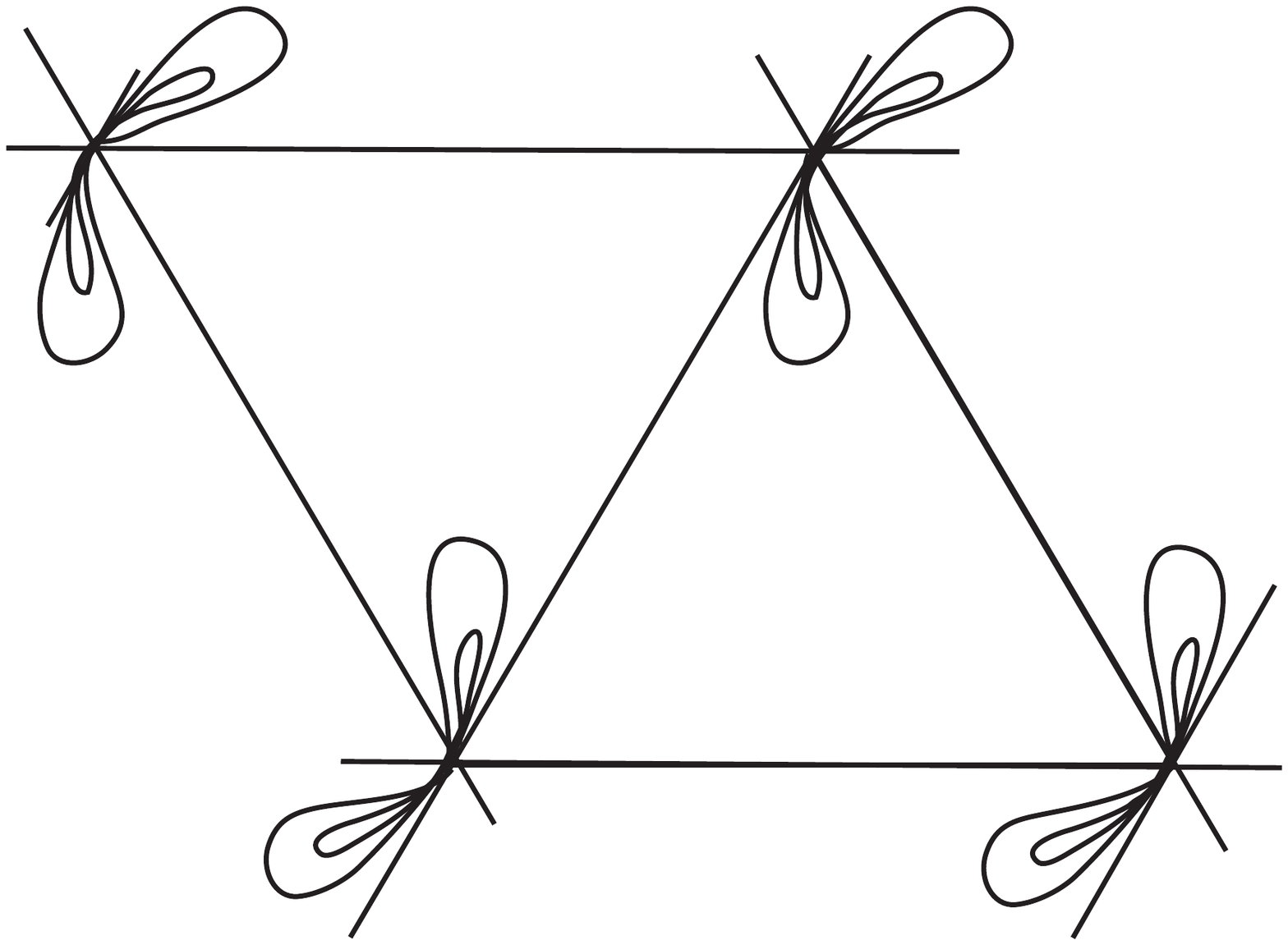}}
\caption{}
\label{fig:3n4template} 
\end{figure}

\begin{thm}  The sequence $(1,2,3,4)$ is universal for all $(3n+2)$-crossing knot projections.
\end{thm}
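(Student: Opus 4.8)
The plan is to prove the statement as worded, for $(3n+2)$-crossing projections. Since this assertion coincides with Theorem \ref{3ncrossing} established above, the natural route is to reprise its template-and-doubling argument, now anchored on the $(3n+2)$-crossing template knot $T$ constructed earlier in this section. First I would fix a knot $K$ and take a rectilinear projection $P$ of $K$, laying it on $T$ exactly as in the $(3n)$-crossing case so that every horizontal strand of $P$ lies on one of the distinguished rows highlighted in Figure \ref{fig:3n2templatecolored}, and so that the corners and self-intersections of $P$ fall on alternating triangles of $T$.

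Next I would apply the doubling method repeatedly. Taking a trivial copy $P'$ of $P$ and perturbing it so that it meets $P$ only at the intersections of $P$ with $T$ and passes straight through each such point, I raise the multiplicity of every intersection of $P$ with $T$ by one, iterating until each such crossing has multiplicity $3n+2$. The only triangles in which $P$ crosses itself are of Type \ref{fig:trianglewithcrossing}, which I would treat by the perturbation of Figure \ref{fig:trianglewithdoubledcrossing}, exactly as before. This produces the four triangle types recorded in Figure \ref{fig:3n2triangles}.

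I would then clean up the complementary regions. Each of the four triangle types yields a single $5$-gon, and I would remove each of these with the loop trick: inserting a trivial $(3n+2)$-crossing between the two offending strands so that, by the relation $k_1+k_2=k+3$ of Figure \ref{fig:looptrick}, the $5$-gon is replaced by faces with at most four sides, as illustrated in Figure \ref{fig:3n25gonsfixed}. Finally I would compose $P$ with the doubling knots and with $T$ using the constructions of Figures \ref{fig:compositionwithdouble} and \ref{fig:compositionwithtemplate}, sizing $P$ and $T$ so that enough single-strand triangles of the kind in Figure \ref{fig:trianglewithsinglestranddoubled} are present to carry out these compositions without creating faces of five or more sides.

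The step I expect to be the main obstacle is the face bookkeeping in the loop-trick and composition stages: I must confirm that after each $5$-gon is resolved the new faces are all monogons, bigons, triangles, or quadrilaterals, and that neither the iterated doubling nor the final compositions reintroduces a larger face. As in Theorem \ref{3ncrossing}, this reduces to a finite local check at each of the four triangle types and at each composition region, so the essential point is only to verify that the parity and face-count accounting for $3n+2$ strands behaves as it did in the $(3n)$-crossing argument.
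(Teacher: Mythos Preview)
Your argument is a faithful reprise of Theorem~\ref{3ncrossing}: you take the $(3n+2)$-crossing template, lay a rectilinear projection on it, double until every crossing has multiplicity $3n+2$, kill each $5$-gon with the loop trick, and compose. That is correct for the statement \emph{as literally written}, but it is also literally the same theorem as Theorem~\ref{3ncrossing}, so you are proving something already proved.

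The paper's own proof here does something different, because the statement is a typo: from the surrounding text (the construction of the $(3n+4)$-crossing template in Figure~\ref{fig:3n4template} immediately before the theorem, and the proof itself) it is clear the intended statement is that $(1,2,3,4)$ is universal for $(3n+4)$-crossing projections. The paper's proof therefore uses the $(3n+4)$-crossing template knot $T$---obtained from the $(3n+2)$-template by adding a second loop at every crossing---lays $P$ on $T$, and doubles until each crossing has multiplicity $3n+4$. The four triangle types and their loop-trick fixes are the same as in Figures~\ref{fig:3n2triangles} and~\ref{fig:3n25gonsfixed}, and the compositions are carried out as in the $(3n)$-crossing case.

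So the only substantive divergence is the choice of template: you anchored on the $(3n+2)$-template because you read the theorem at face value, whereas the paper anchors on the $(3n+4)$-template because that is the case actually being established. Once you swap in the $(3n+4)$-template and adjust the target multiplicity accordingly, your outline matches the paper's proof essentially verbatim.
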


\begin{proof}  Use the $(3n+4)$-crossing template knot $T$.  Lay a polygonal projection $P$ on top of $T$ as above.  Double $P$ as above with 
enough copies so that each crossing has multiplicity $3n+4$.  This gives the same four types of triangles, which can be fixed the same way. 
Compose as in the $(3n)$-crossing case.
\end{proof}


\section{4-Crossing Case}


We now need only prove that the sequence is universal for $4$-crossing projections.  This will require a different template knot, but the method will be analogous 
to the above proofs.

Consider the knot projection of Figure \ref{fig:4templateex}.  Notice that this is a 4-crossing diagram which 
realizes the sequence $(1, 2, 3, 4)$. Further, it has a central rectangular 
region tiled by squares bounded by bigons.  We want to show that for any $n \times (n+1)$ rectangle where $n$ is even, there exists a knot projection of this form.

\begin{figure}[h!]
	\centering
	\scalebox{.7}{\includegraphics{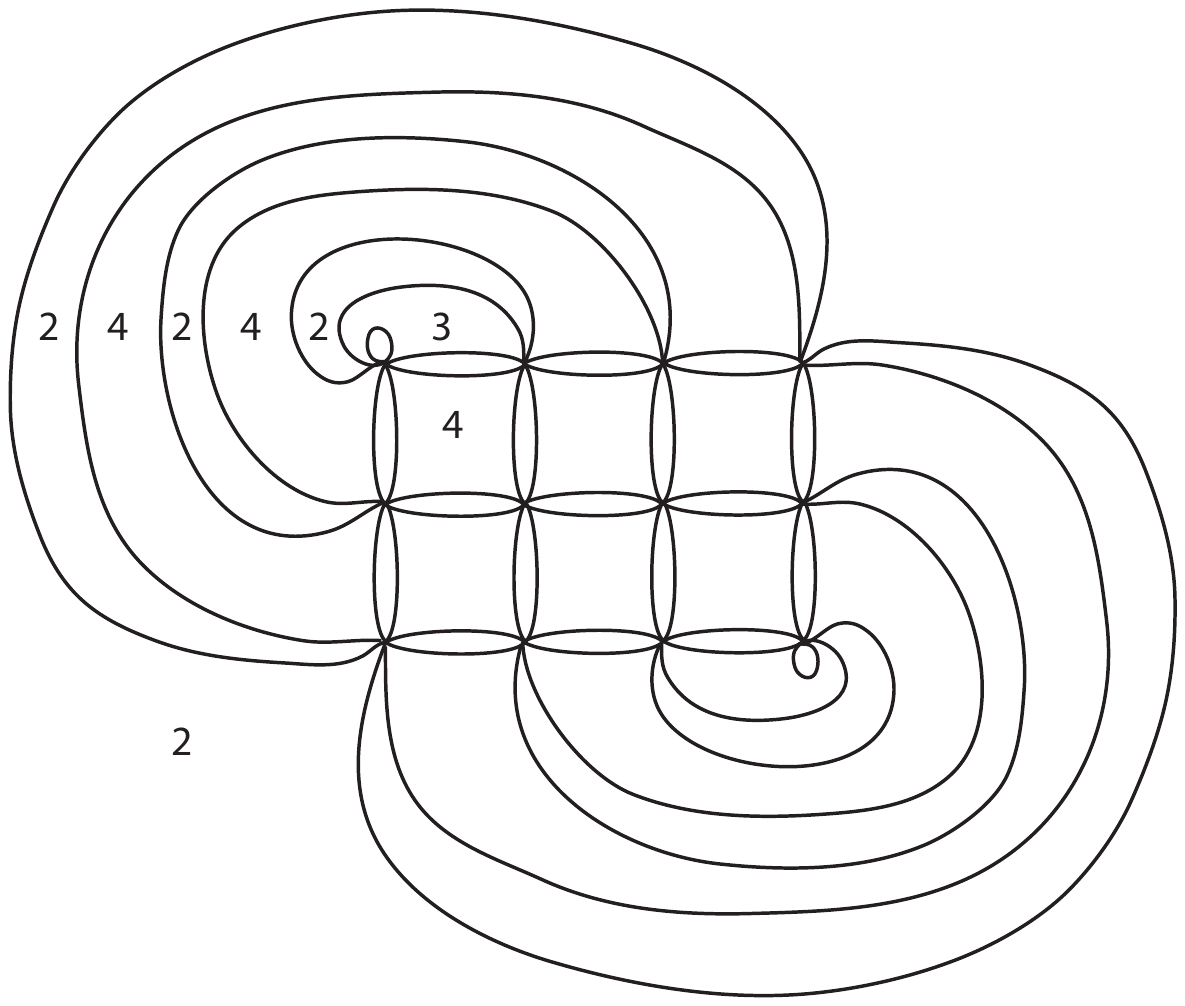}}
	\caption{}
	\label{fig:4templateex}
\end{figure}

\begin{lemma}\label{lemma:4template}  For $n$ an even integer, there exists a 4-crossing knot projection which has an 
$n \times (n+1)$ rectangular central region, tiled as in 
Figure \ref{fig:4templateex}, and which realizes the sequence $(1, 2, 3, 4)$.
\end{lemma}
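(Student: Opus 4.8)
The strategy mirrors the triple-crossing template construction of Lemma~\ref{3template}: build an explicit $4$-crossing projection whose central region is a rectangular grid of squares, with ``tail edges'' emerging along the boundary that must be connected up, outside the rectangle, by a matching that (i) closes up into a single strand (so the projection is a knot) and (ii) produces only monogons, bigons, triangles and quadrilaterals in the complementary regions. The parity constraint that $n$ be even is what makes the closing-up matching consistent, so I would keep track of parity carefully throughout.

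\textbf{Step 1: set up the tail edges.} In an $n\times(n+1)$ grid of unit squares, every interior vertex is already $4$-valent, but vertices on the boundary of the rectangle have valence $2$ or $3$ and corners have valence $2$. To upgrade every vertex to a $4$-crossing I would let extra strands emerge outward: two strands at each corner of the rectangle, and one strand at each non-corner boundary vertex. Counting along one side of length $n$ there are $n-1$ interior boundary vertices, and similarly $n$ along a side of length $n+1$; together with $2$ strands at each of the $4$ corners this gives $2(n-1)+2\cdot n+8 = 4n+6$ tail edges — an even number, as it must be since they pair up — and I would record this count as a lemma-internal bookkeeping identity exactly as in the proof of Lemma~\ref{3template}.

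\textbf{Step 2: define the exterior matching and verify connectivity.} I would label the tail edges cyclically around the boundary, say starting at one corner and proceeding clockwise, and specify an explicit pairing: short ``return'' arcs that pair a tail edge with a nearby one (creating bigons and triangles, as visible in Figure~\ref{fig:4templateex}), together with a few long arcs that carry a strand across the outside of the rectangle. The key combinatorial claim is that the resulting $4$-valent graph is connected, i.e. following any strand through the crossings (where each strand passes straight through) eventually traverses every edge; I would prove this by tracing the permutation induced on the tail edges exactly as in Table~\ref{table:3crossingsequence}, showing that the ``straight-through'' rule inside the rectangle composed with the exterior matching gives a single cycle. This is the step where $n$ even is essential — for $n$ odd the trace would split into more than one component — and this is where I expect the main difficulty to lie, since it requires a clean description of how the horizontal and vertical ``threads'' of the square grid interleave when closed up along the boundary.

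\textbf{Step 3: verify the complementary regions.} Finally I would check face by face that every complementary region of the completed projection is a $1$-, $2$-, $3$-, or $4$-gon: the interior faces of the square grid are quadrilaterals; the faces created between the grid boundary and the return arcs are bigons and triangles (this is the role of the bigons bounding the tiled region in the statement); the corner configurations contribute the monogons; and the unbounded face, after the long exterior arcs are routed, can be made a triangle or quadrilateral by adding, if necessary, a trivial crossing of the type used throughout Section~2 (the ``fix the $k$-gon'' moves of Figures~\ref{fig:fixingthe5gon} and~\ref{fig:compositionwithdouble}). Since by construction every vertex is a $4$-crossing and (by Step~2) the projection is a knot, this completes the proof; the crossing data is irrelevant to the complementary regions, so the projection realizes $(1,2,3,4)$ regardless of how the strands are ordered at each crossing.
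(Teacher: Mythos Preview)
There is a genuine gap: your model of the central grid has the wrong valence. A $4$-crossing is a point where \emph{four} strands meet and each passes straight through, so the underlying graph vertex has degree $8$, not $4$. An ordinary square grid has $4$-valent interior vertices, so its vertices are only ordinary ($2$-)crossings; adding one or two tail edges at boundary vertices, as you propose, brings every vertex up to degree $4$ and hence produces a regular projection, not a $4$-crossing projection at all.

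In the paper's template every interior vertex already has degree $8$: the rectangular region is tiled so that the faces are squares and bigons (this is the ``tiled by squares bounded by bigons'' in the statement), and correspondingly the paper counts \emph{four} tail edges at each corner and \emph{two} at each non-corner boundary crossing, for a total of $4(4)+2\bigl(2n+2(n-1)\bigr)=4(2n+3)=8n+12$ tail edges --- exactly twice your $4n+6$. With that corrected count the paper labels the tails $1A,\dots,(2n+3)A,(2n+3)A',\dots,1A',1B,\dots,(2n+3)B',\dots,1B'$, writes down explicit exterior rules $iA\leftrightarrow iA'$, $iB\leftrightarrow iB'$ together with explicit across-the-rectangle relations, and then traces the induced permutation in Table~\ref{table:4crossingsequence} to verify it is a single cycle; the exterior nested arcs already produce only $1$-, $2$-, $3$-, and $4$-gons, so no extra ``fix the $k$-gon'' move is needed for the outer face. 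Your Steps~2 and~3 have the right overall shape, but because the underlying combinatorics is twice as large and the interior face structure is different, both the explicit matching and the parity argument for $n$ even have to be redone on the correct $8$-valent model.
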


\begin{proof} Consider an $n \times (n+1)$ grid as in Figure \ref{fig:4generaltemplate}, where $n$ is even.  Since we want each crossing to 
be a quadruple crossing, we have four tail edges at the corners of our grid, and two tail edges at each crossing along the edges of the central 
rectangle.  Notice that there are $4(4) + 2(2(n)+2(n-1)) =  16 + 8n - 4 = 4(2n+3)$ tail edges.  

\begin{figure}
	\centering
	\scalebox{.5}{\includegraphics{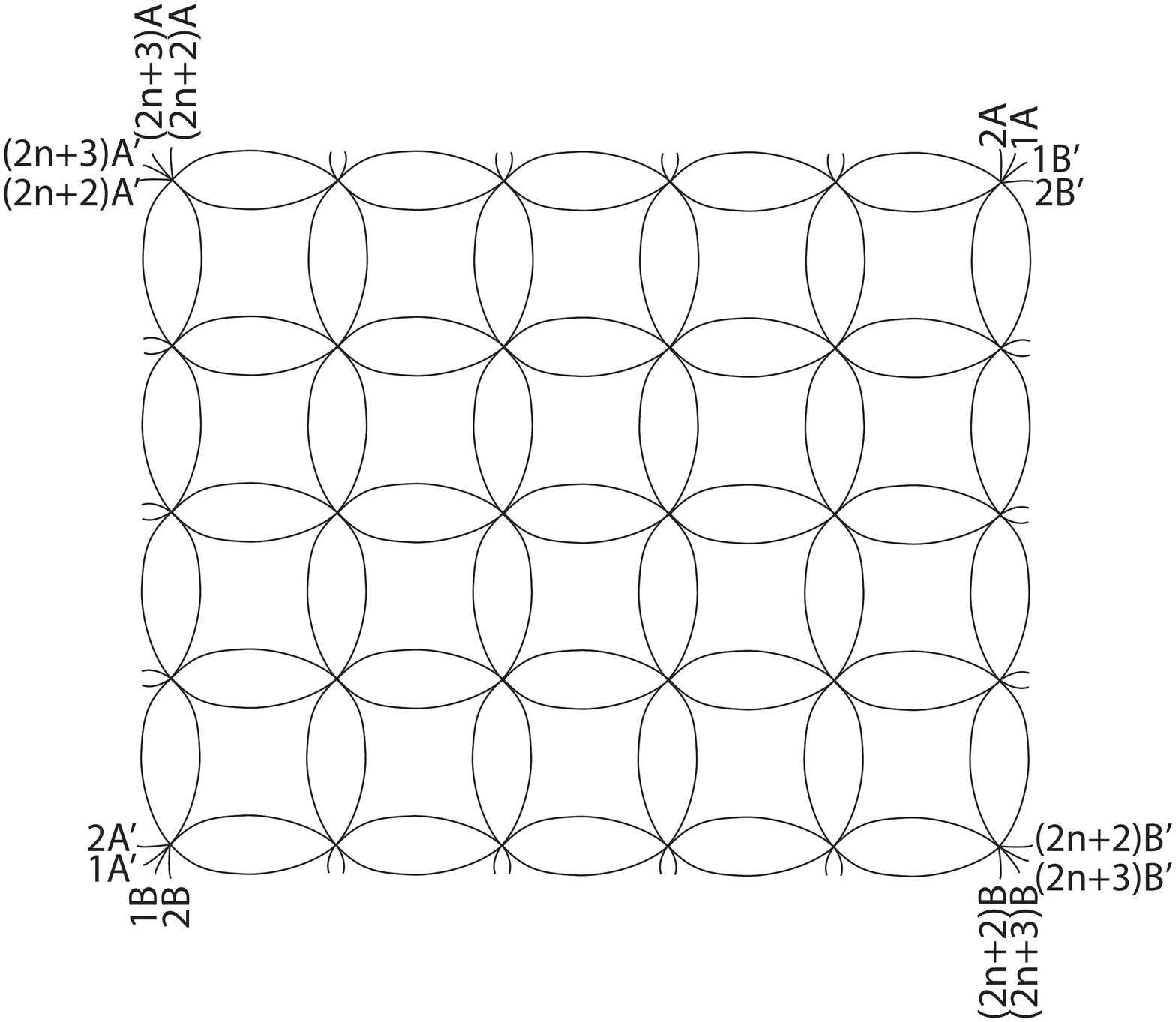}}
	\caption{}
	\label{fig:4generaltemplate}
\end{figure}

Consider the upper right corner.  Label the second most counterclockwise strand $1A$.  Continuing counterclockwise around the 
rectangular region, label the strands $2A, 3A, \dots, (2n+2)A, (2n+3)A, (2n+3)A', (2n+2)A', \dots, 2A', 1A', 1B, 2B, \dots, (2n+2)B, (2n+3)B, 
(2n+3)B', (2n+2)B', \dots, 2B', 1B'$, as in Figure \ref{fig:4generaltemplate}.  For all $i = 1, \dots, 2n+3$, connect the tail edges by the rule
	\begin{align*}
		iA \leftrightarrow iA' \\
		iB \leftrightarrow iB'.	
	\end{align*}

Across the rectangle, we get the following relations.  
	\begin{align*}
		(2i+2)A \leftrightarrow [2(n-i) + 1]B \\
		(2i+2)B \leftrightarrow [2(n-i)+1]A
	\end{align*}
for $i = 1, \dots, (n-1)$.  Notice that this does not include $2A$ or $2B$, since these connect with $B'$ and $A'$ respectively, according to the 
relationship
	\begin{align*}
		2A \leftrightarrow (2n+3)B' \\
		2B \leftrightarrow (2n+3)A'.
	\end{align*}
 For all values of $A'$ and $B'$, we have: 
	\begin{align*}
		(2i+1)A' \leftrightarrow [2(n-i)+2]B' \\
		(2i+1)B' \leftrightarrow [2(n-i)+2]A'
	\end{align*}
for $i = 0, \dots, n$.  Equivalently, we can say that: 
	\begin{align*}
		2iA' \leftrightarrow [2(n-i)+3]B' \\
		2iB' \leftrightarrow [2(n-i)+3] A'
	\end{align*}
for $i = 1, \dots, n+1$.  This gives us every relation between all values of $A, A', B,$ and $B'$.

Define an orientation that travels from $1B$ to $1B'$.  Then following the strand, we get the sequence of tail edges along the knot shown in 
Table \ref{table:4crossingsequence}.  This sequence hits every tail edge of the rectangle.  
Since the interior paths from tail edge to tail edge cover all strands across the rectangle, 
this projection is a knot.
\end{proof}

Pick orderings on the strands so that this knot is the unknot.  We will call an unknot of this type a \emph{$4$-crossing template knot}. 

\begin{thm}\label{thm:4crossing}  The sequence $(1, 2, 3, 4)$ is universal for $4$-crossing knots.
\end{thm}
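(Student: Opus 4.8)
The plan is to run the argument of Theorems~\ref{thm:3crossing} and~\ref{thm:3ncrossing} with the triangular templates replaced by the $4$-crossing template knot of Lemma~\ref{lemma:4template}. Given a knot $K$, first take a rectilinear projection $P$ of $K$, all of whose segments are horizontal or vertical, laid on a square grid; rescale so that every corner and self-crossing of $P$ lies at the center of a grid square, and so that some row of the grid contains two consecutive squares met by $P$ in only a single horizontal strand. Then fix an even $n$ large enough that the $n\times(n+1)$ template knot $T$ of Lemma~\ref{lemma:4template}, whose central region is a rectangle tiled by squares bounded by bigons, has more tiles than $P$ has features, and overlay $P$ on $T$ so that the corners and self-crossings of $P$ land in the squares of one color class of a checkerboard coloring of the tiling (the alternating squares), while elsewhere the strands of $P$ run through tile interiors and cross the edges of $T$ transversally.

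Next I would do the local analysis, parallel to Figures~\ref{fig:knotontemplate} and~\ref{fig:doubledknot}. After a Reidemeister~II move no square need contain merely a corner of $P$, so the squares to worry about contain two uncrossing strands of $P$, a single strand of $P$, or a self-crossing of $P$. Apply the doubling method: lay a trivial copy $P'$ of $P$ on top of $P$, perturb it so that it meets $P$ only at the crossings of $P$ with $T$ (running straight through each of those crossings, by Reidemeister~II) and so that near each self-crossing of $P$ it behaves as in Figure~\ref{fig:trianglewithdoubledcrossing}. Iterating this doubling raises the multiplicity of each of these crossings to $4$, while $T$'s own crossings are already $4$-crossings. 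Throughout, track the side count of every complementary face; whenever one with five or more sides appears, remove it with the loop trick of Figure~\ref{fig:looptrick} (taken at multiplicity $4$), which replaces a $k$-gon by faces whose side counts satisfy $k_1+k_2=k+3$ at the cost of a single trivial $4$-crossing, so that in the end only monogons, bigons, triangles and quadrilaterals remain. Drawing the $4$-crossing analogues of Figures~\ref{fig:doubledknot} and~\ref{fig:fixingthe5gon} is the content of this step.

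Then I would check, as in the $3$-crossing case, that the doubling copy $P'$ closes up into one loop rather than being forced to run around $P$ twice: bicolor the squares of $T$, orient $P$, and observe that $P'$ passes from the left of $P$ to its right exactly when it exits a square of one color and back again when it exits a square of the other color, so it returns to the side it started on. Finally, connect the original knot to each of the doubling knots and to the template knot via the compositions of Figures~\ref{fig:compositionwithdouble} and~\ref{fig:compositionwithtemplate}, with every auxiliary crossing a trivial $4$-crossing, and clear the $5$-, $6$- and $7$-gons these compositions create with the loop trick; the two consecutive single-strand squares arranged at the outset supply the room to do this. Because the faces of a projection do not depend on the crossing data, one may then choose the crossing data of $T$ and of all the doubling knots so that the underlying knot is $K$ while all faces are $1$-, $2$-, $3$- or $4$-gons, giving the desired $4$-crossing projection.

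The step I expect to be the real obstacle is the local analysis: since the template is tiled by squares rather than triangles, each tile starts with one more side than in the $3$-crossing argument, leaving less slack, and the bigons wedged between squares make the bookkeeping for where to insert the extra trivial $4$-crossings delicate. Pinning down the $4$-crossing versions of Figure~\ref{fig:doubledknot}, in particular isolating the bad configuration that yields a $5$-gon and verifying that one loop-trick move clears it without spawning a new large face, is where the work lies; the closing-up and composition steps are essentially formal once the $3n$-crossing proof is available.
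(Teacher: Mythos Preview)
Your outline is essentially the paper's own argument: overlay a rectilinear projection on the square-tiled $4$-crossing template of Lemma~\ref{lemma:4template}, double twice to bring every crossing to multiplicity four, repair the lone $5$-gon configuration with a trivial $4$-crossing, and then compose with the doubling knots and the template. The local analysis you flag as the hard step is exactly where the paper spends its effort.

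A few details you have carried over from the triangular case do not survive the passage to squares. There is no ``two uncrossing strands'' square: that configuration only arose in Section~2 because the shear sent a single crossing into a pair of adjacent triangles, whereas on a square grid a self-crossing sits in one tile. The paper's three local types are corner, single strand, and crossing, and it \emph{keeps} the corner squares rather than sliding them away by Reidemeister~II. Likewise there is no need to land corners and crossings on one colour class, and the bicolouring argument is replaced by simply allowing both possibilities---either the two doubling copies $P'$ and $P''$ get composed with each other during the perturbations or they do not---and composing accordingly. Finally, the template composition uses one of the boundary bigons rather than a second single-strand square, so only one such square is required.
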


\begin{proof}  Given a 4-crossing knot $K$, consider a regular projection $P$ of $K$.  Take a polygonal projection $P'$ of $K$ 
such that all the edges of $P'$ are either vertical or horizontal.  Size $P'$ so that given a square grid, all the corner and crossings of $P'$ 
occur at the center of a grid square.  Then we can lay $P'$ on the rectangular grid of our template knot in the same manner as above.

There are three types of squares that can occur in this process, up to rotation: (a) a corner of the original knot, (b) a single strand of the original knot, or 
(c) a crossing of the original knot.  These are depicted in Figure \ref{fig:4knotontemplate}.  

\begin{figure}[!h]
	\begin{subfigure}{.3\textwidth}
		\centering
		\scalebox{.3}{\includegraphics{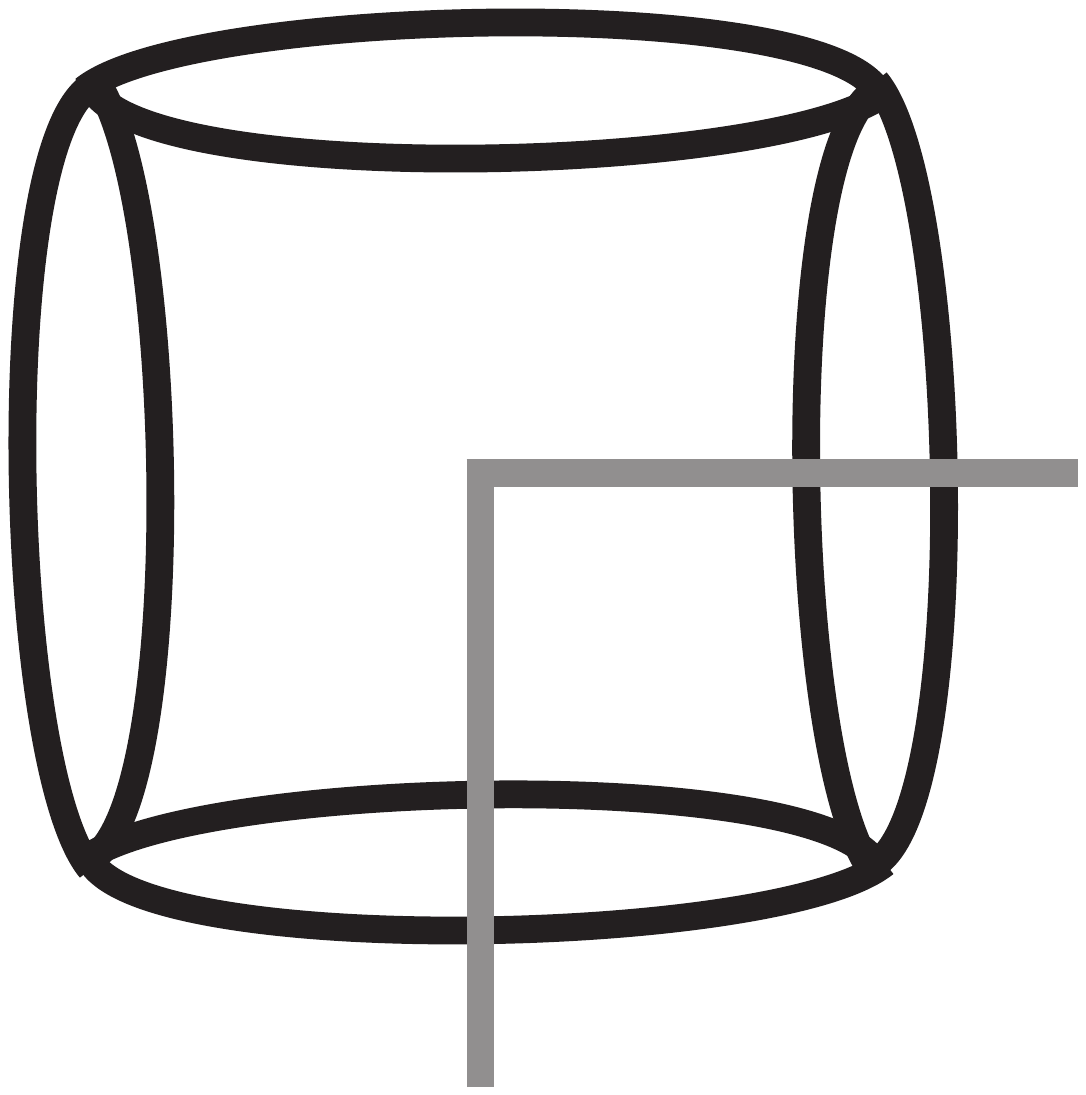}}
		\caption{}
		\label{fig:squarewithcorner}
	\end{subfigure}
	\begin{subfigure}{.3\textwidth}
		\centering
		\scalebox{.3}{\includegraphics{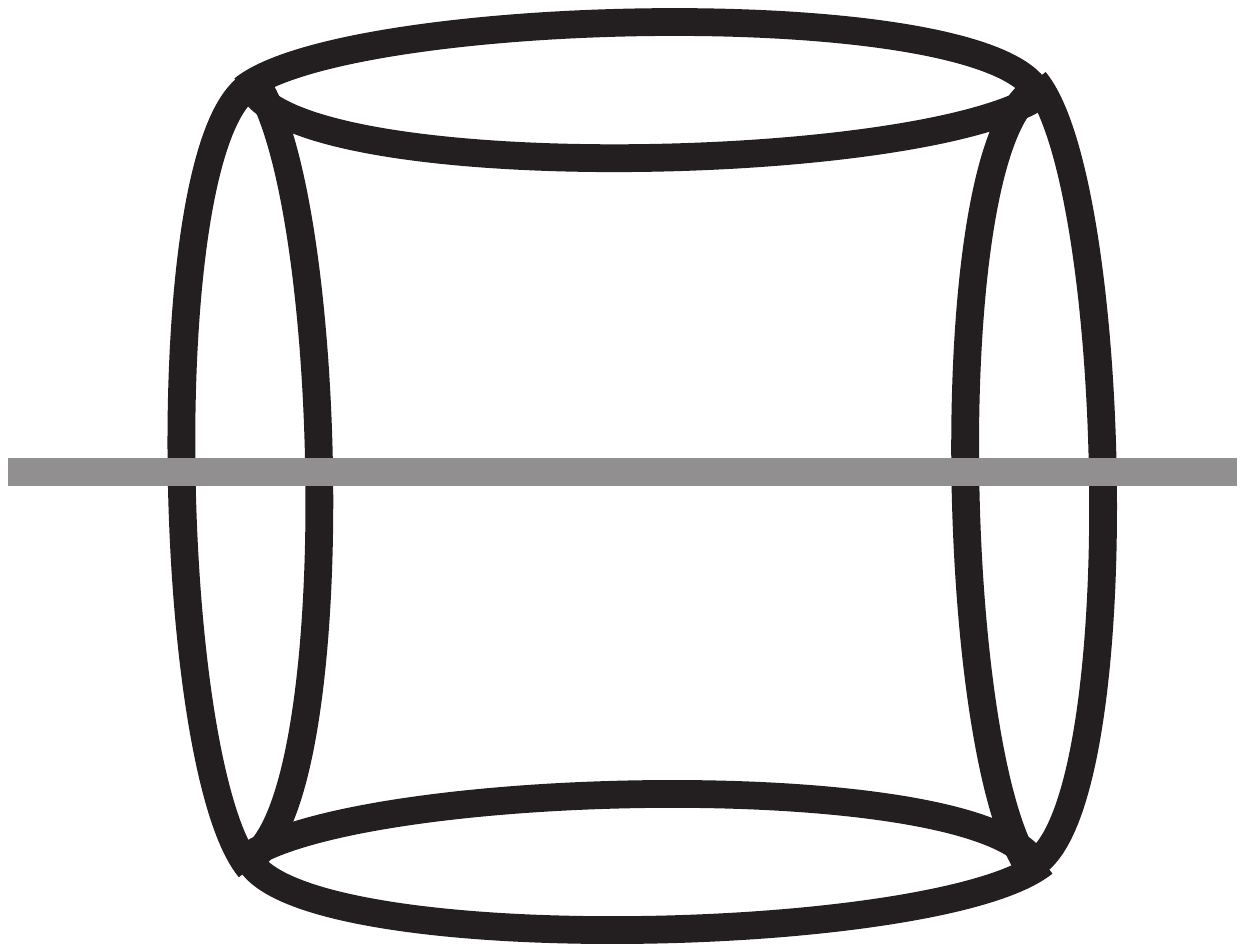}}
		\caption{}
		\label{fig:squarewithcorner}
	\end{subfigure}
	\begin{subfigure}{.3\textwidth}
		\centering
		\scalebox{.3}{\includegraphics{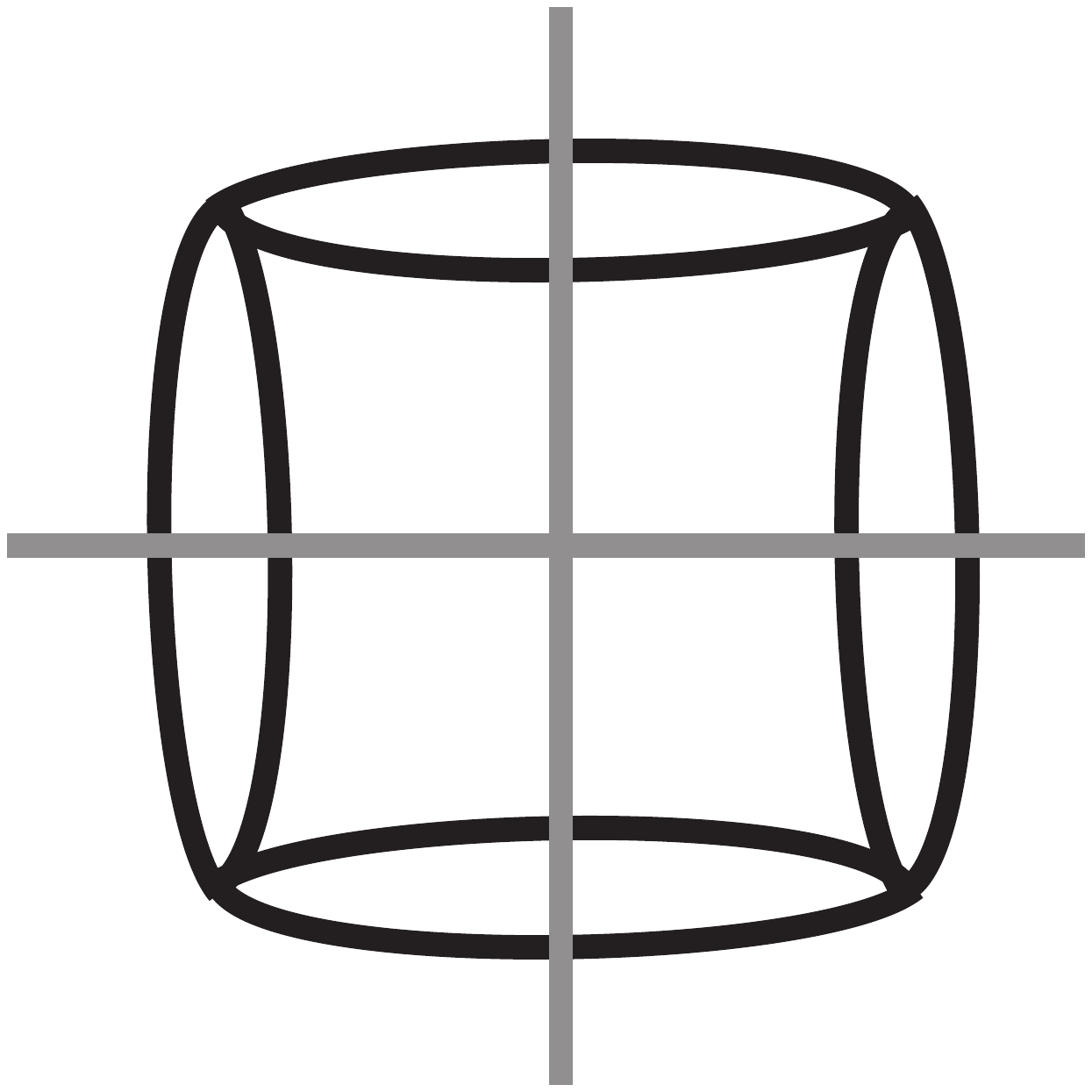}}
		\caption{}
		\label{fig:squarewithcorner}
	\end{subfigure}
	\caption{(a) Corner of the original knot. (b) Single strand of the original knot.  (c) Crossing of the original knot.}
	\label{fig:4knotontemplate}
\end{figure}

Take a trivial copy $P'$ of the original knot.  Lay it on top of the original knot, and perturb it slightly so that it crosses $P$ only at crossing points.  
Double $P$ with a second trivial copy $P''$ in the same manner, so that all crossings have multiplicity four.  Now there are 
three types of squares that can occur: (a) a doubled corner of the original knot, (b) a doubled single strand of the original knot, and (c) a 
doubled crossing of the original knot, as depicted in Figure \ref{fig:4knotontemplatedoubled}.

\begin{figure}[!h]
	\begin{subfigure}{.3\textwidth}
		\centering
		\scalebox{.3}{\includegraphics{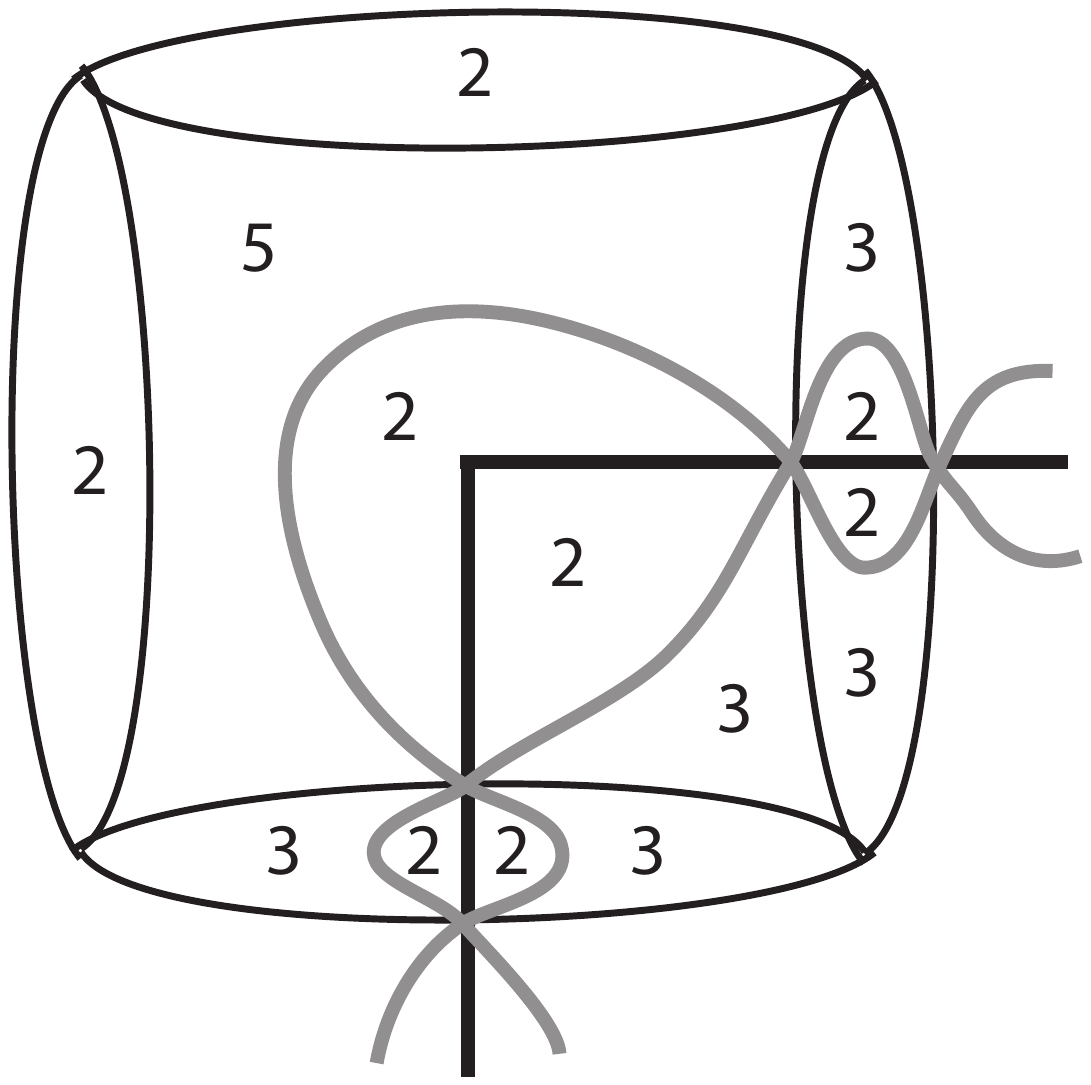}}
		\caption{}
		\label{fig:4squarewithcornerdoubled}
	\end{subfigure}
	\begin{subfigure}{.3\textwidth}
		\centering
		\scalebox{.3}{\includegraphics{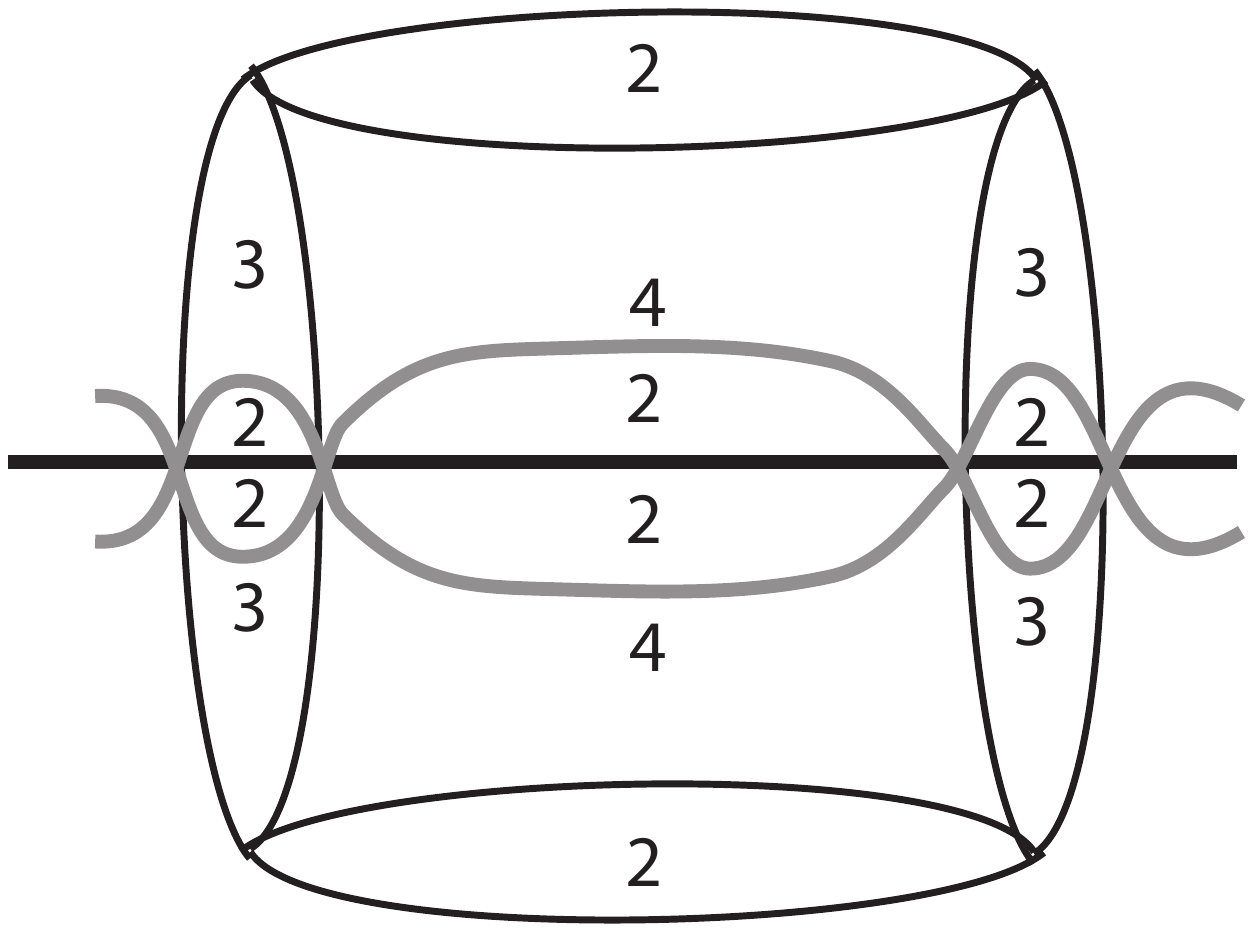}}
		\caption{}
		\label{fig:4squarewithsinglestranddoubled}
	\end{subfigure}
	\begin{subfigure}{.3\textwidth}
		\centering
		\scalebox{.3}{\includegraphics{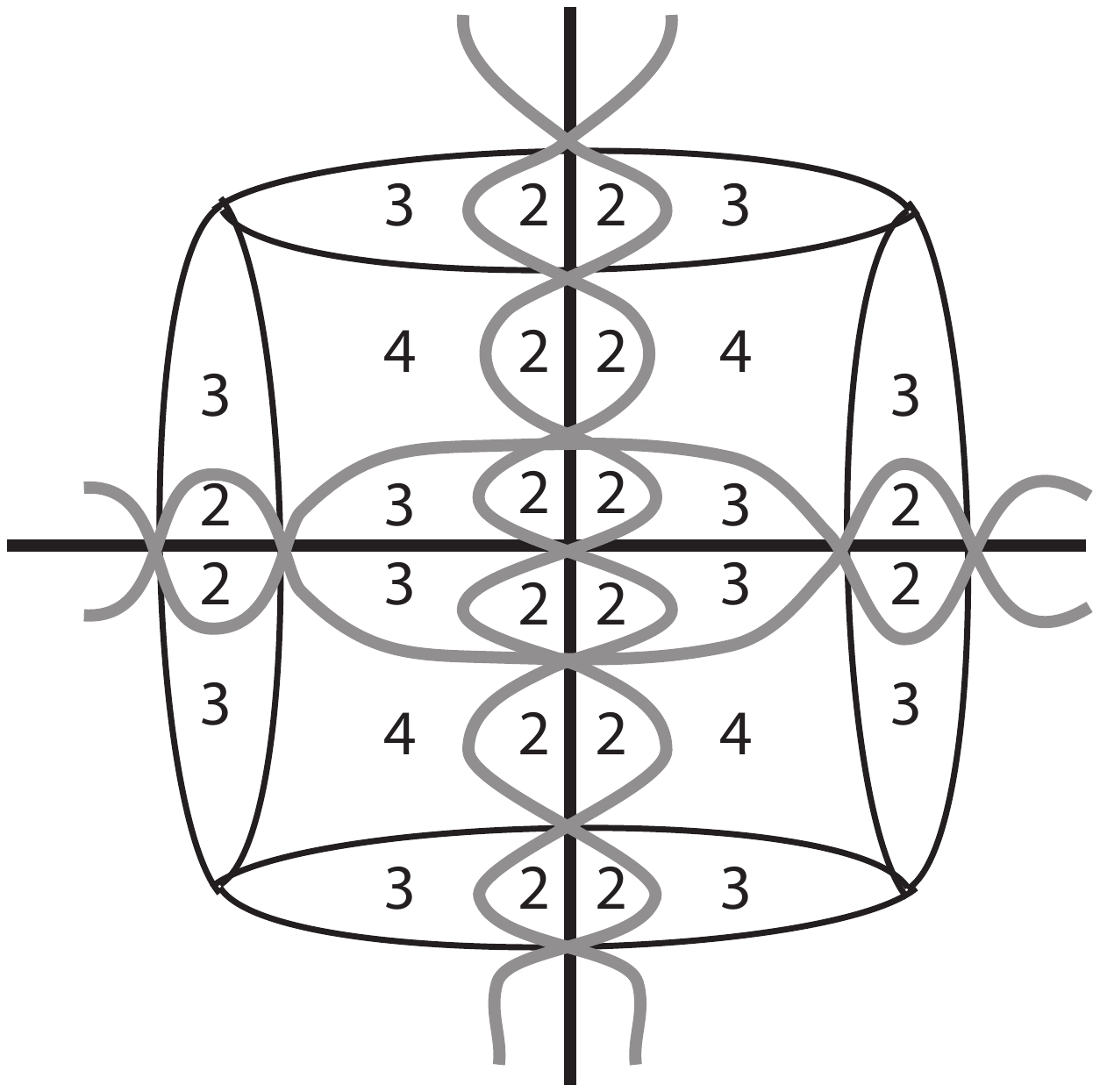}}
		\caption{}
		\label{fig:4squarewithcrossingdoubled}
	\end{subfigure}
	\caption{(a) Corner of the original knot. (b) Single strand of the original knot.  (c) Crossing of the original knot.}
	\label{fig:4knotontemplatedoubled}
\end{figure}

There is only one case of a 5-gon.  This can be replaced with monogons, bigons, and triangles using the method shown in Figure 
\ref{fig:4fixingthe5gon}.

\begin{figure}[h!]
	\begin{subfigure}[b]{0.3\textwidth}
		\centering
		\scalebox{.3}{\includegraphics{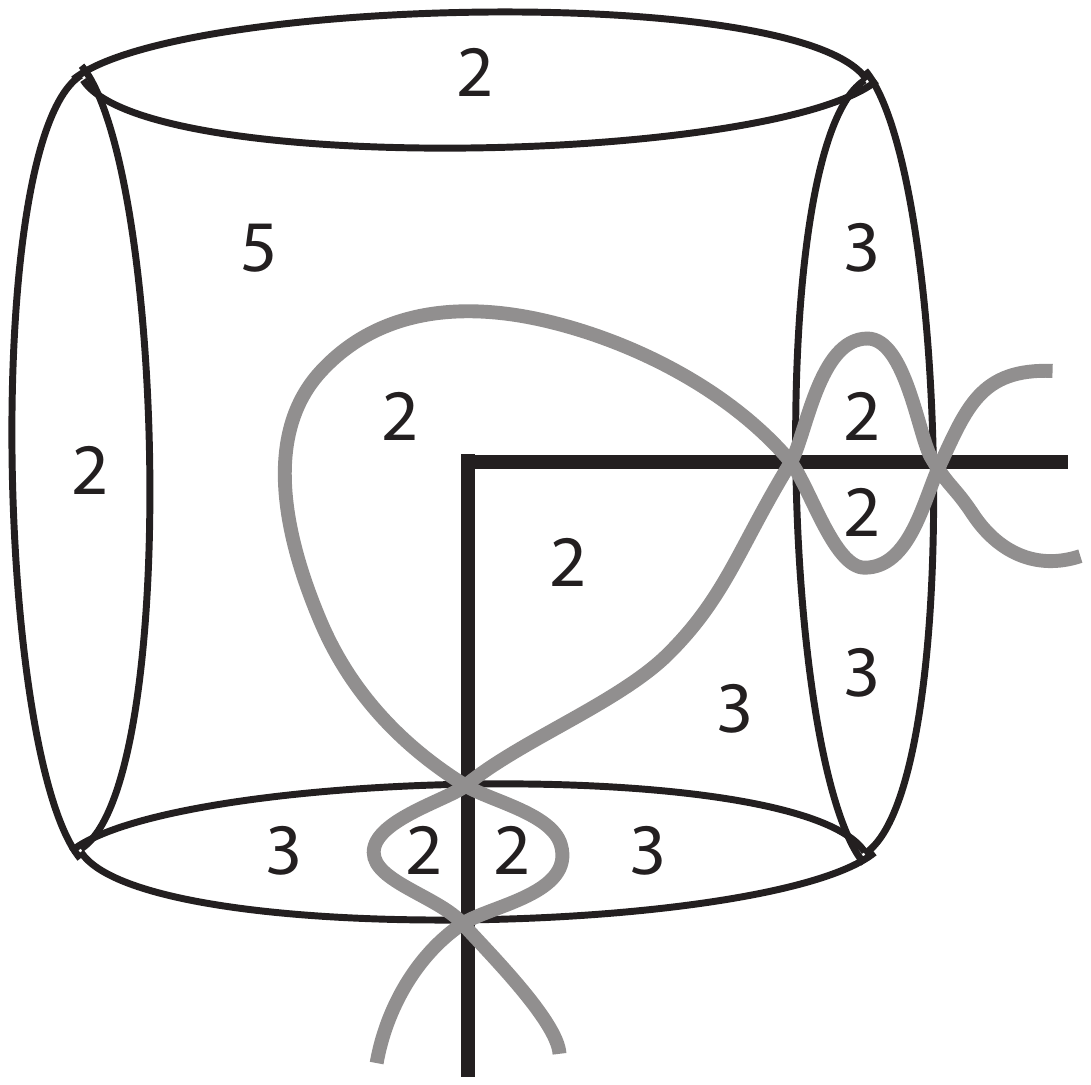}}
		\caption{}
		\label{fig:4fixing5gon1} 
	\end{subfigure}
	\begin{subfigure}[b]{0.3\textwidth}
		\centering
		\scalebox{.3}{\includegraphics{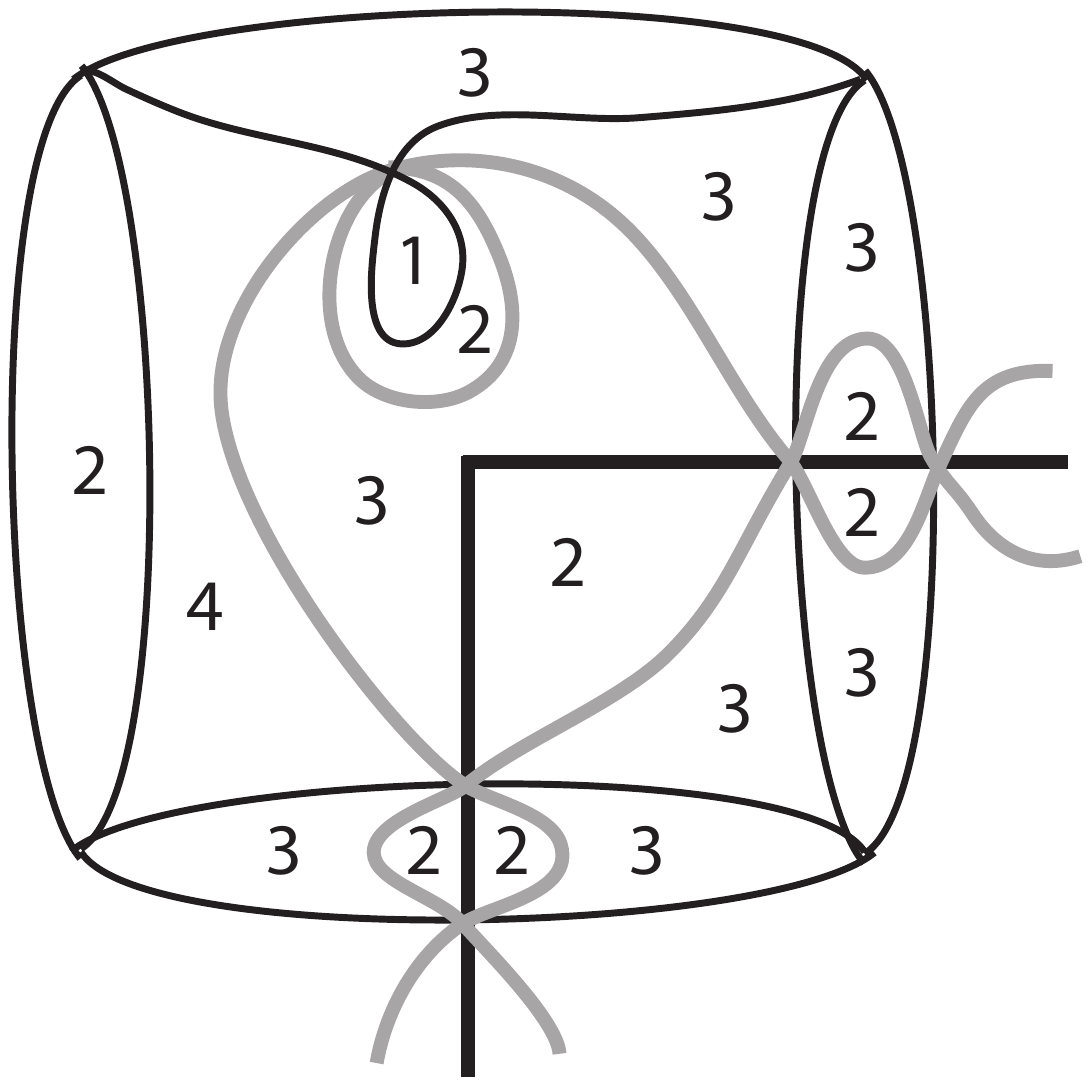}}
		\caption{}
		\label{fig:4fixing5gon2} 
	\end{subfigure}
\caption{(a) Two doubled uncrossing strands, creating a 5-gon.  (b) Adding an additional trivial 4-crossing to eliminate the 5-gon.}
\label{fig:4fixingthe5gon} 
\end{figure}

Now either $P'$ and $P''$ will become composed with each other in the process of the Reidemeister II move perturbations, or they will remain 
separate. If they become composed with each other, we need only compose the doubling knot with 
the template knot and the doubling knot with the original knot.  If not, we must compose a 
doubling knot with the original knot and 
each doubling knot with the template knot.  Compose the original knot with a doubling knot as in Figure \ref{fig:4composewithdouble} 
(note that we may need to do this twice), and 
compose the doubling knot with the template knot as in Figure \ref{fig:4composewithtemplate}.  Notice that we need one square as in 
Figure \ref{fig:4squarewithsinglestranddoubled} and one bigon to accomplish this.  This can be accomplished by choosing a fine enough grid.

\begin{figure}[h!]
	\begin{subfigure}[b]{0.3\textwidth}
		\centering
		\scalebox{.3}{\includegraphics{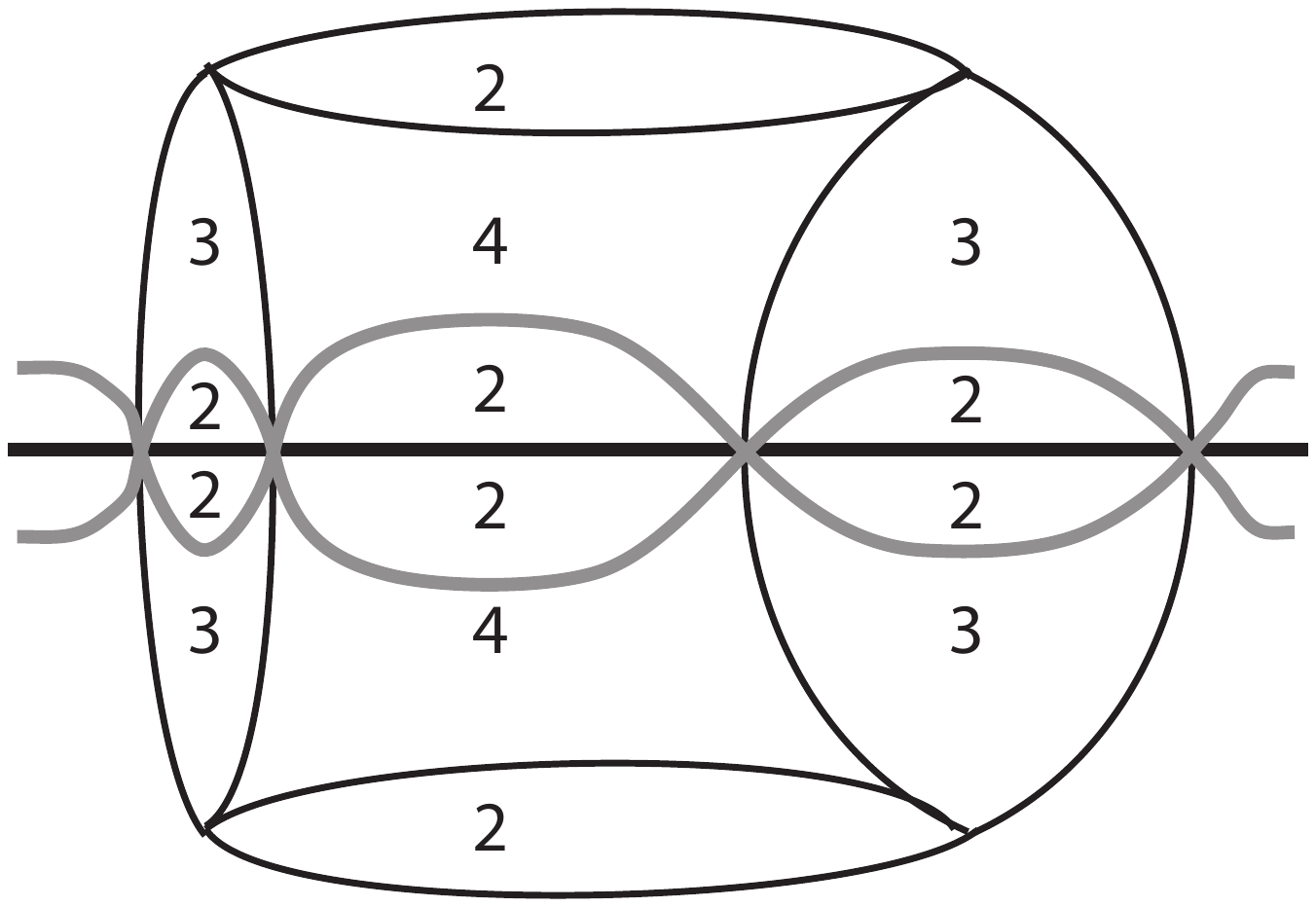}}
		\caption{}
		\label{fig:4composewithdouble1} 
	\end{subfigure}
	\begin{subfigure}[b]{0.3\textwidth}
		\centering
		\scalebox{.3}{\includegraphics{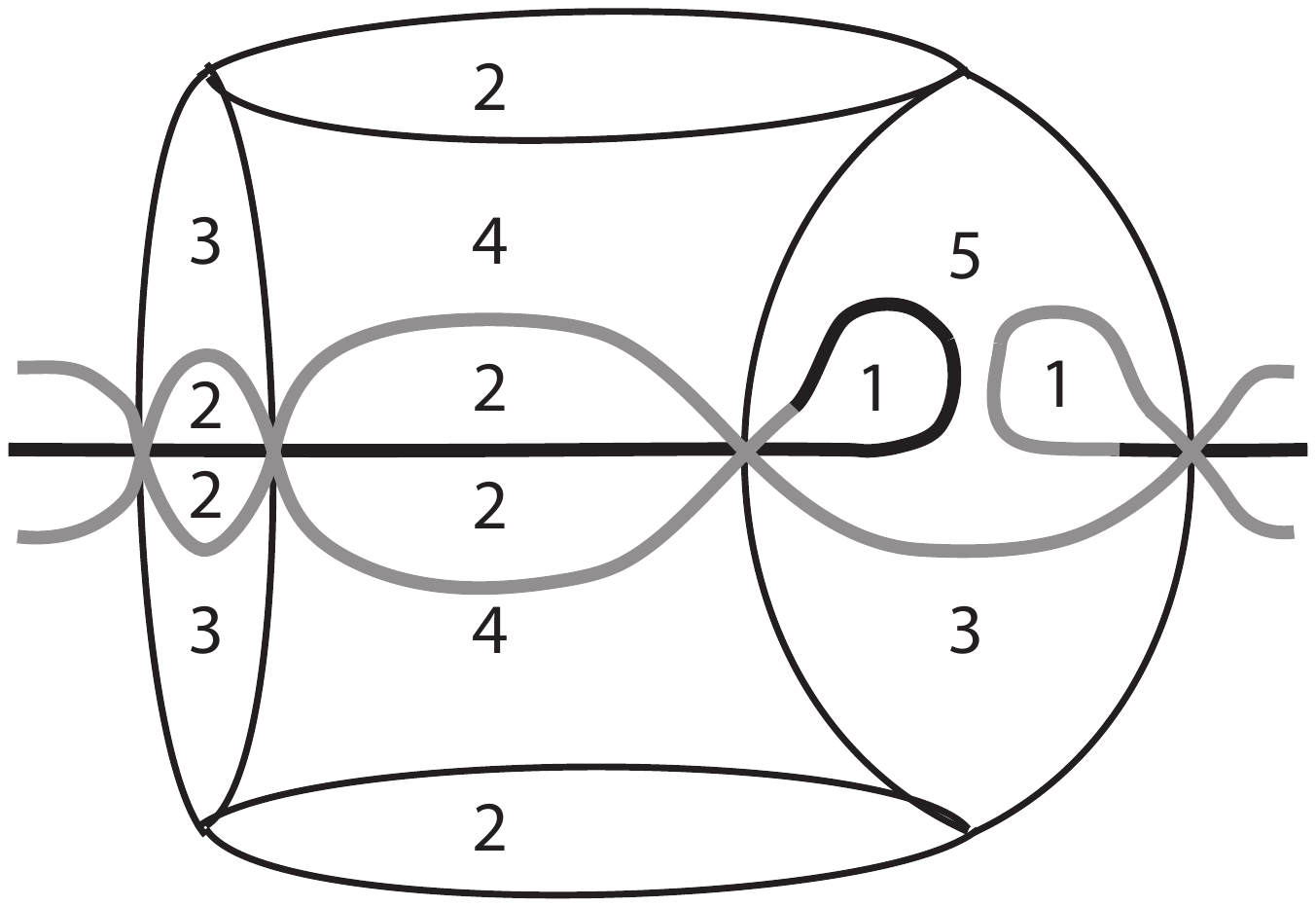}}
		\caption{}
		\label{fig:4composewithdouble2} 
	\end{subfigure}
	\begin{subfigure}[b]{0.3\textwidth}
		\centering
		\scalebox{.3}{\includegraphics{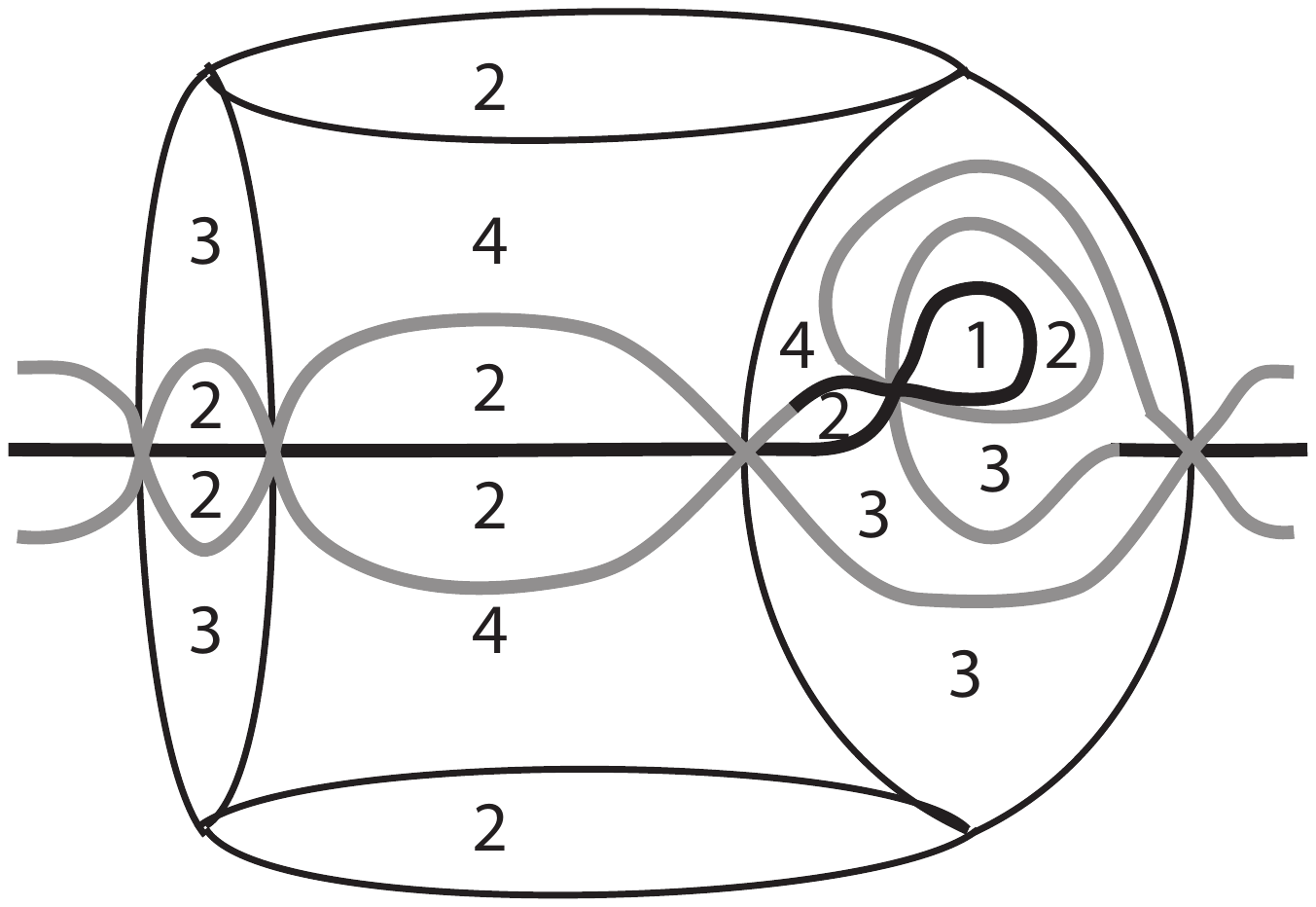}}
		\caption{}
		\label{fig:4composewithdouble3} 
	\end{subfigure}
\caption{Composition with a doubling knot. (a) Beginning projection. (b) Composition of the original knot and the doubling knot.  
		(c) Trivial triple-crossing to fix the 5-gon.}
\label{fig:4composewithdouble} 
\end{figure}

\begin{figure}[h!]
	\begin{subfigure}[b]{0.3\textwidth}
		\centering
		\scalebox{.3}{\includegraphics{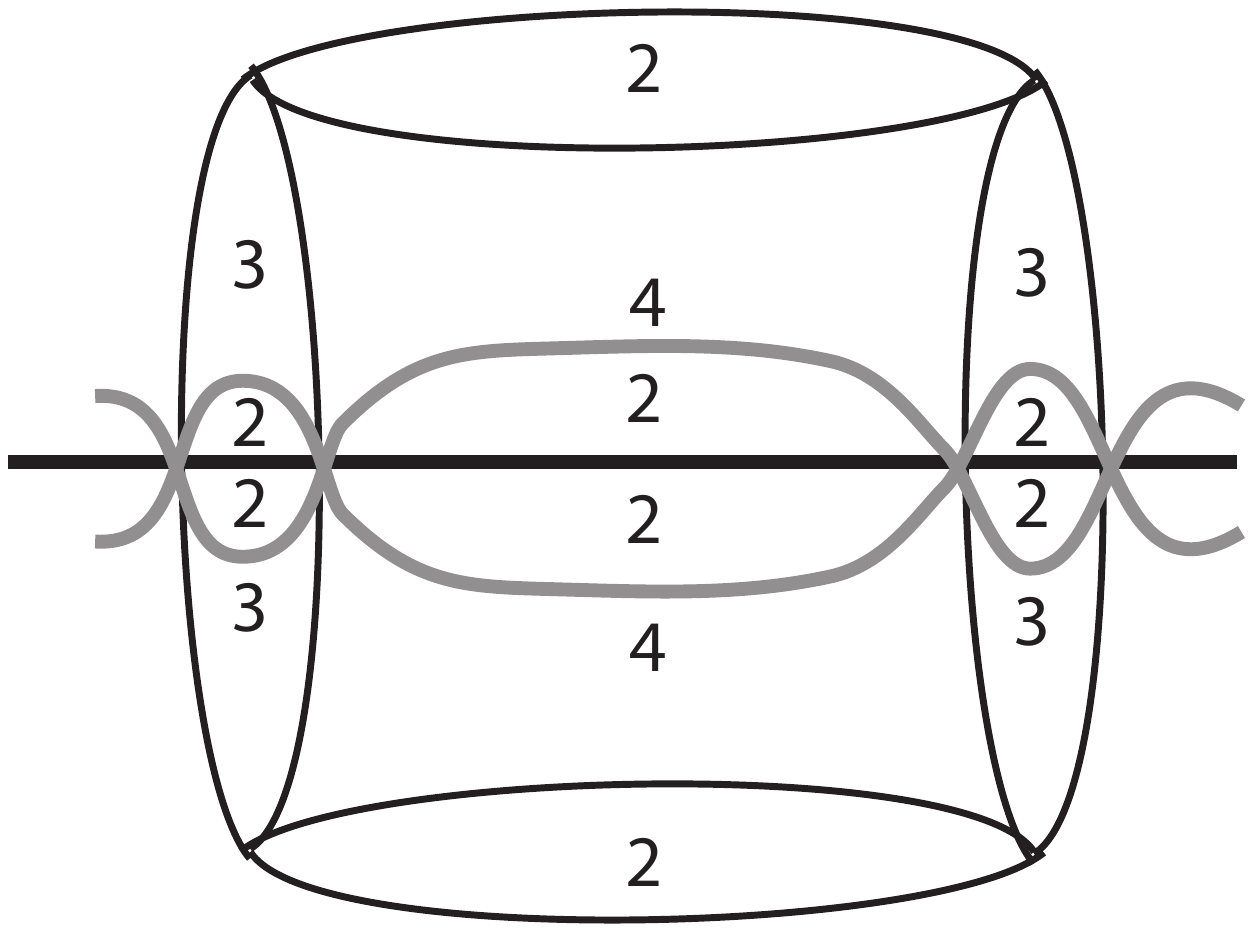}}
		\caption{}
		\label{fig:4composewithtemplate1} 
	\end{subfigure}
	\begin{subfigure}[b]{0.3\textwidth}
		\centering
		\scalebox{.3}{\includegraphics{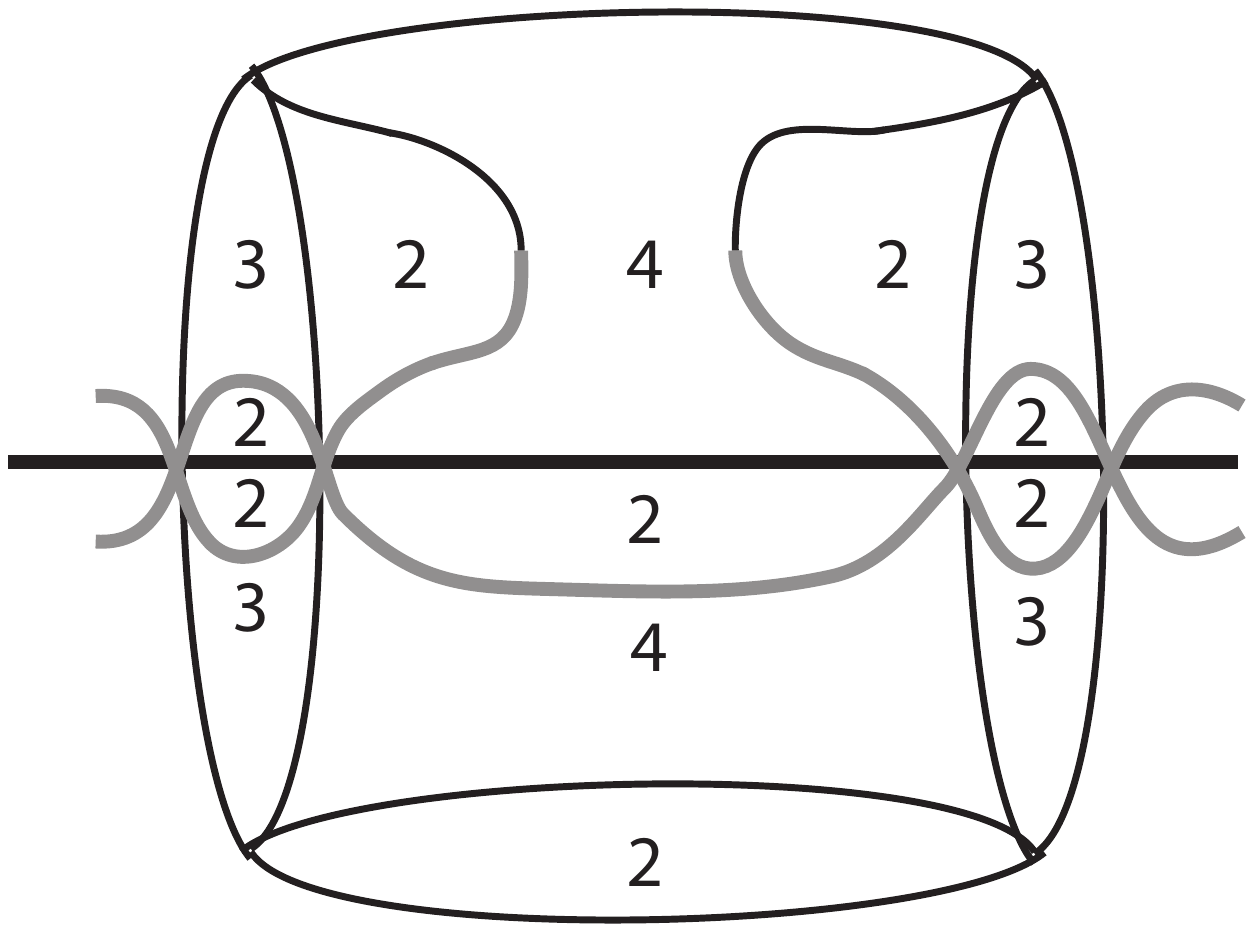}}
		\caption{}
		\label{fig:4composewithtemplate2} 
	\end{subfigure}
\caption{Composition with the template knot.  (a) Beginning projection.  (b) Composition of the doubling knot and the template.}
\label{fig:4composewithtemplate} 
\end{figure}
\end{proof}

This proof concludes the proof of the main theorem.


\section{Conclusions}

From this we can immediately see that $(1, 2, 3, 4)$ is universal for link projections as well.  We need only compose the template knot with one link component 
and each link component with its respective doubling knots.

\begin{cor}  For all $n>2$, the sequence $(1,2,3,4)$ is universal for $n$-crossing link projections.
\end{cor}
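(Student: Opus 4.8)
The plan is to bootstrap directly off the knot case established in Sections 2--5, since passing from knots to links introduces only one genuinely new issue: keeping track of how many components the final projection has.

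First I would fix a link $L = L_1 \cup \cdots \cup L_m$ and choose an $n$-crossing template knot $T$ with a sufficiently large central region; depending on the residue of $n$ modulo $3$ this is the template of Lemma \ref{3template}, Lemma \ref{lem:3ntemplate}, Lemma \ref{lemma:4template}, or the $(3n+2)$- and $(3n+4)$-crossing variants of Section 4, each of which can be taken arbitrarily large. I would then pick rectilinear projections $P_1, \dots, P_m$ of the components and overlay them on the grid of $T$ exactly as in the proofs of Theorems \ref{thm:3ncrossing} and \ref{thm:4crossing}, so that every corner and every crossing---whether a self-crossing of some $P_i$ or a crossing between two distinct $P_i, P_j$---lies at the center of a grid cell, on alternating cells, and each $P_i$ meets $T$ transversally. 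A crossing between two different components looks locally identical to a self-crossing, so no new triangle or square types appear.

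Next I would apply the doubling method of Section 3 componentwise: for each $i$ take a trivial copy $P_i'$ (and further copies as needed) and perturb it so that it meets $T \cup P_1 \cup \cdots \cup P_m$ only at crossing points, passing straight through each, until every crossing has multiplicity $n$. The loop trick of Figure \ref{fig:looptrick} (or its $4$-crossing analogue, Figure \ref{fig:4fixingthe5gon}) removes the single type of $5$-gon that arises, exactly as before, leaving only $1$-, $2$-, $3$-, and $4$-gons. Finally I would absorb the trivial pieces: compose each doubling knot into its own parent component as in Figure \ref{fig:compositionwithdouble} (harmless since the doubling knots are trivial), and compose $T$ into a single chosen component, say $L_1$, as in Figure \ref{fig:compositionwithtemplate} (harmless since $T$ is trivial). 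A sufficiently fine grid supplies the few consecutive single-strand cells and bigons these compositions require, and the resulting $n$-crossing projection then realizes $L$ with all complementary regions $1$-, $2$-, $3$-, or $4$-gons.

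The one step needing care---the only place the argument is not word-for-word the knot case---is verifying that the doubling of a component $L_i$ cannot accidentally connect-sum $L_i$ with some $L_j$ for $j \neq i$, nor split the doubling knot $P_i'$ into two separate loops, so that after absorbing the trivial knots we are left with exactly $m$ components forming $L$. This holds because doubling strands belonging to different components are never made to cross one another: at each shared crossing, each companion strand runs straight through alongside its own component. Hence the bicoloring and return argument used in the $3$-crossing proof applies verbatim to each $P_i$ separately, and no unwanted connect-sums or splittings occur.
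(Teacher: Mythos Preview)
Your proposal is correct and follows exactly the route the paper takes: overlay all the link components on a single sufficiently large template, double each component, repair the $5$-gons, then compose the template with one component and each component with its own doubling knots. The paper's argument for this corollary is in fact a one-sentence sketch (``We need only compose the template knot with one link component and each link component with its respective doubling knots''), so your elaboration---in particular your care about whether the doubling process could inadvertently connect-sum distinct components---supplies detail the paper omits rather than departing from its method.
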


Furthemore, this gives another proof that all knots have an $n$-crossing projection for all $n\geq 3$, which appears as Theorem 4.3. in \cite{Ad}.  

\begin{cor} Let $K$ be a knot or link.  For all $n>2$, there is an $n$-crossing projection of $K$.
\end{cor}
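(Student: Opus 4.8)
The plan is to obtain this statement as an immediate consequence of the universality results proved above, since the assertion ``$K$ has an $n$-crossing projection realizing the sequence $(1,2,3,4)$'' already presupposes that $K$ admits an $n$-crossing projection in the first place. So for $K$ a knot and any fixed $n>2$, I would simply observe that $n$ is of the form $4$, $3m$, $3m+2$, or $3m+4$ for a suitable $m\geq 1$, and then invoke the corresponding theorem of Sections 2--5 (Theorem~\ref{thm:4crossing} for $n=4$, Theorem~\ref{thm:3ncrossing} for $n=3m$, Theorem~\ref{3ncrossing} for $n=3m+2$, and its analogue for $n=3m+4$). Each of these produces, for the given knot, an $n$-crossing projection — in fact one whose complementary regions are only monogons, bigons, triangles, and quadrilaterals, though that refinement is irrelevant here.

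For $K$ a link, I would reduce to the knot case exactly as in the preceding corollary. Take an $n$-crossing template knot $T$ of the appropriate multiplicity (constructed via the doubling method of Section 3, or via Lemma~\ref{lemma:4template} when $n=4$). Lay a rectilinear diagram of each link component of $K$ over $T$, double each component with its own trivial doubling knot or knots until every crossing has multiplicity $n$, using the loop trick to replace any $5$-gons that arise, and finally compose $T$ with a single chosen component and compose each component with its respective doubling knots, just as in Figures~\ref{fig:compositionwithdouble} and~\ref{fig:compositionwithtemplate}. The result is an $n$-crossing projection of $K$.

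I do not expect a genuine obstacle: this corollary is a formal consequence of the main theorem and of its link version. The only point requiring care is one already handled in the proofs being cited — namely that the doubling and composition moves do not force two distinct doubling knots (or a doubling knot and the template) to become linked or composed in an uncontrolled fashion, and that there are always enough ``single strand'' triangles (Figure~\ref{fig:trianglewithsinglestranddoubled}) or single-strand squares (Figure~\ref{fig:4squarewithsinglestranddoubled}) available to carry out all the required compositions. Both of these are guaranteed by taking the underlying grid, and hence the template knot, sufficiently fine, so no additional argument is needed here.
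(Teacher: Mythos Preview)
Your proposal is correct and matches the paper's approach: the paper simply states this corollary as an immediate consequence of the main theorem (and its link version in the preceding corollary), without giving any further argument. If anything, you spell out more detail than the paper does.
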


\noindent \textbf{Open Questions}
\begin{enumerate}
	\item Is $(1,2,3,4)$, or equivalently $(2,3,4)$, universal for regular projections of knots?
	\item Are there any sequences of three integers which are universal for all $n$-crossing projections of knots?
\end{enumerate}


\begin{landscape}
$\\ \\ \\ \\ \\ \\$
	\begin{table}[ht]
	\begin{center}
	\setlength{\tabcolsep}{2pt}
	\begin{tabular}{|cccccccccccccccccccccccccc|}
	\hline &&&&&&&&&&&&&&&&&&&&&&&&& \\
		$(2n+1)C$ &$\to$ &$(2n+1)C'$ &$\to$ &$2B'$ &$\to$ &$2B$ &$\to$ &$(2n-1)C$ &$\to$ &$\dots$ &$\to$& 
						&$3C$ &$\to$ &$3C'$ &$\to$ &$(2(n-1))B'$ &$\to$ &$(2(n-1))B$ &$\to$ &$1C$ &$\to$ &$1C'$ &$\to$ &\\
		$(2n+1)B $&$\to $&$(2n+1)B'$ &$\to$ &$2A' $&$\to$ &$2A$ &$\to$ &$(2n-1)B $&$\to$ &$\dots$ &$\to$ &
						&$3B $&$\to $&$3B'$ &$\to $&$(2n)A'$ &$\to $&$(2n)A$ &$\to $&$1B$ &$\to$ &$1B'$&$\to$& \\
		$(2n+1)A$ &$\to$ &$(2n+1)A'$ &$\to$ &$2C'$ &$\to$ &$2C$ &$\to $&$(2n-1)A$ &$\to$ &$\dots$ &$\to$& 
						&$3A$ &$\to$ &$3A'$ &$\to$ &$(2n)C'$ &$\to$ &$(2n)C $&$\to$ &$1A $&$\to$ &$1A'$& $\to$ &$(2n+1)C$\\
		&&&&&&&&&&&&&&&&&&&&&&&&&\\ 
		\hline 
	\end{tabular}
	\end{center}
	\caption{}
	\label{table:3crossingsequence}
	\end{table}
	\\ \\ \\ \\
	
	\begin{table}[ht]
	\begin{center}
	\setlength{\tabcolsep}{2pt}
	\begin{tabular}{|cccccccccccccccccccc|}
	\hline &&&&&&&&&&&&&&&&&&& \\
		$1B$&$\to$		&$1B'$&$\to$	&$(2n+2)A'$	&$\to$	&$(2n+2)A$&$\to$	
				&$4B$&$\to$	&$4B'$&$\to$	&$(2n-1)A'$&$\to$	&$(2n-1)A$&$\to$ &&&& \\
		$5B$&$\to$		&$5B'$&$\to$	&$(2n-2)A'$	&$\to$	&$(2n-2)A$&$\to$	
				&$8B$&$\to$	&$8B'$&$\to$	&$(2n-5)A'$&$\to$	&$(2n-5)A$&$\to$ & &&& \\
		$\vdots$&		&$\vdots$&	&$\vdots$&		&$\vdots$&	
				&$\vdots$&		&$\vdots$&	&$\vdots$&		&$\vdots$&&&&&\\
		$(2n-1)B$&$\to$		&$(2n-1)B'$&$\to$	&$4A'$	&$\to$	&$4A$&$\to$	
				&$(2n+2)B$&$\to$	&$(2n+2)B'$&$\to$	&$1A'$&$\to$	&$1A$&$\to$ &&&& \\
		$(2n+3)B$&$\to$	&$(2n+3)B'$&$\to$	&		&	&&&&&&&&&&&&&&\\
		&&&&&&&&&&&&&&&&&&&\\
					&&&&$2A$&$\to$	&$2A'$	&$\to$	&$(2n+1)B'$&$\to$	&$(2n+1)B$ &$\to$	
							&$3A$&$\to$	&$3A'$&$\to$	&$(2n)B'$&$\to$	&$(2n)B$&$\to$ \\
					&&&&$6A$&$\to$	&$6A'$&$\to$	&$(2n-3)B'$&$\to$	&$(2n-3)B$&$\to$	
							&$7A$&$\to$	&$7A'$&$\to$	&$(2n-4)B'$&$\to$	&$(2n-4)B$&$\to$ \\
					&&&&$\vdots$&		&$\vdots$&	&$\vdots$&		&$\vdots$&   
							&$\vdots$&		&$\vdots$&	&$\vdots$&		&$\vdots$&\\
					&&&&$(2n)A$&$\to$	&$(2n)A'$&$\to$	&$3B'$&$\to$	&$2B$&$\to$	
							&$(2n+1)A$&$\to$	&$(2n+1)A'$&$\to$	&$2B'$&$\to$	&$2B$&$\to$ \\
					&&&&&&&&&&&&&&&&&&&\\
		$(2n+3)A'$&$\to$	&$(2n+3)A$&$\to$	&$1B$.&&&&&&&&&&&&&&&\\
		&&&&&&&&&&&&&&&&&&&\\ 
		\hline 
	\end{tabular}
	\end{center}
	\caption{}
	\label{table:4crossingsequence}
	\end{table}
\end{landscape}



\begin{thebibliography}{0}

\bibitem{Ad} C. Adams, \emph{Triple Crossing Number of Knots and Links}, preprint at arXiv:1207.7332.

\bibitem{AST} C. Adams, R. Shinjo, K. Tanaka, \emph{Complementary Regions of Knot and Link Diagrams}, Annals of Combinatorics, \textbf{15.4} (2011) 549-563.




\end{thebibliography}
\end {document}